\documentclass[leqno]{amsart}

\usepackage{amsthm}
\usepackage{amssymb}
\usepackage{mathrsfs}
\usepackage{graphicx}
\usepackage{todonotes}
\usepackage{xcolor}

{%
\setcounter{enumi}{0}

\begin{enumerate}}%
{\end{enumerate} }
\usepackage[inner=3cm,outer=3cm,bottom=4cm,top=4cm]{geometry}
\makeatletter

\renewcommand*{\descriptionlabel}[1]{%
  \let\orglabel\label
  \let\label\@gobble
  \phantomsection
  \edef\@currentlabel{#1\unskip}%
  \let\label\orglabel
  \hspace\labelsep \upshape\bfseries #1%
}
\makeatother

{%
\setcounter{enumi}{0}

\begin{enumerate}}%
{\end{enumerate} }

\newcommand{\R}{\mathbb{R}}

\newcommand{\LL}{\mathcal{L}}

\newcommand{\mL}{\mathcal{L}}
\newcommand{\bfj}{\textbf{j}}
\DeclareMathOperator{\Div}{div}

\def\N{\mathbb{N}}

\usepackage[colorlinks]{hyperref}
\usepackage{enumitem}
\newtheorem{thm}{Theorem}[section]
\newtheorem{lem}[thm]{Lemma}

\theoremstyle{definition}
\newtheorem{definition}[thm]{Definition}
\newtheorem{example}[thm]{Example}
\newtheorem{rem}[thm]{Remark}

\numberwithin{equation}{section}

\definecolor{purple-red}{rgb}{1.0, 0.27, 0.0}


\newcommand{\cb}
{\color{blue}}

\allowdisplaybreaks
\DeclareRobustCommand{\SkipTocEntry}[5]{}

\subjclass[2020]{35S10, 35A01, 35A08, 35K08, 49L12, 35B65, 45K05, 35K61}

 \keywords{viscous Hamilton--Jacobi equations, Schauder estimates, nonlocal operators, mild solutions}
\title[Schauder 
regularity for fractional HJ equations]{
A Schauder regularity theory for 
nonlocal and mixed local-nonlocal viscous Hamilton--Jacobi equations} 
\author{Espen R. Jakobsen}
\address{Norwegian University of Science and Technology, H\o{}gskoleringen 1, 7034 Trondheim, Norway}
\email{espen.jakobsen@ntnu.no}
\author{Robin Ø. Lien}
\address{Norwegian University of Science and Technology, H\o{}gskoleringen 1, 7034 Trondheim, Norway}
\email{robin.o.lien@ntnu.no}
\author{Artur Rutkowski}
\address{Wroc\l aw University of Science and Technology,
Wyb. Wyspia\'nskiego 27, 50-370 Wroc\l aw, Poland}
\email{artur.rutkowski@pwr.edu.pl}
\begin{document}
\begin{abstract}
    We prove space-time Schauder estimates -- optimal regularity estimates in Hölder spaces -- and well-posedness results for mild and classical solutions of 
    viscous Hamilton--Jacobi
    equations with subcritical nonlocal and mixed local-nonlocal diffusions in $\R^d$.
    Our 
    spatial Schauder estimates hold under mild assumptions on the nonlocal/mixed operators and Hamiltonians.
    The Laplacian, fractional Laplacians, nonsymmetric, spectrally one-sided, 
     and strongly anisotropic integral operators, as well as sums of such operators are covered.
    We observe an interplay between the regularity of the initial data and the growth of the Hamiltonian in the gradient, and 
     develop a spatial Schauder theory for two canonical cases: (i) Lipschitz initial data and general Hamiltonians that are Hölder in space and merely locally Lipschitz in the gradient, and (ii) Hölder initial data and Hamiltonians that are Hölder in space and locally Lipschitz with power growth in the gradient. 
    We compute explicit blow-up rates for $C^1$ and higher order H\"older norms as $t\to 0$. The results include short 
    and long time existence of mild solutions, optimal regularity in Hölder spaces and corresponding Schauder a priori estimates, and that spatially smooth mild solutions are regular in time and pointwise classical solutions. Under further assumptions on the diffusion operator, we then prove time and space-time Schauder regularity estimates in optimal H\"older spaces which respect the natural fractional parabolic scaling. These results generalize classical linear local and fractional Schauder estimates to our non-linear fractional, possibly anisotropic and nonsymmetric setting.
\end{abstract}
\maketitle
\tableofcontents

\section{Introduction}
 We investigate well-posedness and Schauder regularity estimates for the initial value problem for the nonlocal viscous Hamilton--Jacobi equation:
\begin{align} \tag{vHJ} \label{eq:hjb}
\left\{
    \begin{aligned}
         \partial_t u -\mathcal{L}u-H(t,x,Du)&=0, \qquad &(t,x)&\in (0,T]\times \R^d, \\ 
        u(0,x)&=u_0(x), \qquad &x &\in \R^d,
    \end{aligned}
\right.
\end{align}
where $\mL$ is the generator of a L\'evy process defined by
\begin{align}\label{eq:L}
    \mathcal{L}\varphi(x) &= \Div(AD\varphi(x)) + \int_{\R^d}\big[\varphi(x+z)-\varphi(x)-\mathbf{1}_{|z|<1}D\varphi(x)\cdot z\big]\,  d\mu(z),\quad x\in \R^d,
\end{align}
$A$ is a nonnegative definite matrix, and $\mu\geq 0$ is a Borel measure satisfying $\int_{\R^d}(1 \wedge |z|^2) \ d\mu(z) < \infty$, i.e. $\mu$ is a \textit{L\'evy measure}. In other words, $\mL$ is a \textit{L\'evy operator} with  the \textit{L\'evy triplet} $(A,0,\mu)$. 
General L\'evy operators 
have a constant drift term $B Du$ 
 and L\'evy triplet 
 $(A,B,\mu)$.  Assuming $B=0$ does not affect the generality, since $B Du$ can be absorbed into the Hamiltonian $H$.

We assume that the nonpositive operator $\mL$ is
of order $\alpha\in(0,2]$, a condition we express in the language of the heat kernel of $\mL$ in assumption \ref{NDa} below. This condition is satisfied by a large class of local, nonlocal, and mixed local-nonlocal L\'evy operators, including fractional Laplacians and strongly anisotropic and nonsymmetric operators, see Example~\ref{ex:NDa} below. 
In most of the paper we assume that $\mL$ is subcritical, i.e. $\alpha\in (1,2]$, but we also give results for the critical case $\alpha=1$. 
\medskip

\noindent \textbf{Main results.} 
We prove short-time existence and optimal 
Schauder regularity 
of solutions of \eqref{eq:hjb}.
We also show that solutions are classical  
and give long-time existence results. 

\medskip
\noindent \textit{Short-time existence (Theorems \ref{thm::hjb_sol_existence} and \ref{thm::case_b_hjb_sol_existence}).} The proof of the short-time existence of mild solutions of \eqref{eq:hjb} in is based on a Duhamel formulation and a fixed point argument in a subset of $C((0,T]; C^1(\R^d))$. 
There is an interplay between the regularity of $u_0$ and the growth of $H=H(t,x,p)$ in $p$ that results in 
two different cases:
    
    \smallskip
    (I) Lipschitz initial data and locally Lipschitz $H$ in $p$. 
    
    \smallskip
    (II) H\"older initial data and $H$ with explicit power-type growth in $p$.

\smallskip
\noindent Case (I) is rather standard as the solutions have uniformly bounded gradient. Case (II) allows for solutions with gradient blow-up as $t\to 0$, and to control it, the growth of the Hamiltonian needs to be compensated for by regularity of the initial condition. We refer to \ref{assump:B1} for the range of the admissible H\"older exponents $\delta$ of $u_0$ with respect to the growth rate $r$ of $H$.
We note that the short-time existence does not require any smoothness or
integrability of $H$ in $x$ or $t$, just continuity and boundedness.

\medskip
\noindent \textit{Optimal Schauder regularity with 
 blow-up rates as $t\to0$ (Theorems \ref{thm::full_schauder_regularity}, \ref{thm:full_schauder_regularity_case_b}, and \ref{thm:spacetimeSchauder}).}  
We show that under condition \ref{NDa}, the solutions to \eqref{eq:hjb} gain spatial H\"older regularity of order $\alpha$ over the regularity of $H$ in $x$. To this end we apply the
``diagonal splitting'' of the space-time cylinder 
previously used e.g. by Chaudru de Raynal, Menozzi, and Priola \cite{deraynal2019schauder} 
in the context of linear equations with drift and supercritical diffusion, i.e. $\alpha\in(0,1)$. We note that \ref{NDa} is slightly weaker than the assumption used in \cite{deraynal2019schauder}, which involves a certain moment condition on the heat kernel needed to handle the unbounded drift coefficient. In the case of unbounded gradients, the optimal H\"older norm blow-up rate is quite cumbersome to obtain -- because of two singularities coming from the gradient and the heat kernel we need to apply a ``doubly fractional'' version of Gr\"onwall's inequality, see Lemma~\ref{lem:generalized_gronwall_2}, and in some cases we also use a bootstrap argument. We also prove Schauder estimates in time in Theorem~\ref{thm:timeSchauder}. To  transfer the space regularity to time regularity we have to impose an upper bound on the order of $\mL$, condition \ref{assump:L2'}.  
Finally, having estimates in both space and time, we give joint space-time estimates in Theorem~\ref{thm:spacetimeSchauder}. Our methods allow for several extensions, for example 
to critical diffusions such as $(-\Delta)^{1/2}$ under a smallness assumption on the data, to diffusions with modulated jumps,  and to Hamiltonians depending also on $u$ and nonlocal terms $Qu$ of order less than or equal to 1. We refer to Section~\ref{sec::remarks_and_extensions} for further discussion.

\medskip
\noindent \textit{Classical solutions and long-time existence (Theorem \ref{thm:u_is_classical_sol} and \ref{thm:long_time_existence}).}
Finally, we show that solutions are classical and that we have long-time existence under further assumptions on $H$. 
 If we assume Hölder regularity of $H$ in $x$, then the solutions are classical, which follows from
the regularity results mentioned above.
Subsequently, using 
results from viscosity solution theory, we achieve long-time existence in the case of $x$-Lipschitz $H$ with explicit power-type growth conditions. More precisely, we 
get well-posedness and global Lipschitz bounds for viscosity solutions on the (long) time interval $[0,T]$, and conclude by uniqueness and previous results 
that this solution coincides locally with smooth mild/classical solutions.
\smallskip

\noindent\textbf{Background.}
Viscous HJ equations arise as dynamical programming equations in optimal stochastic control problems and differential games. The unknown function $u$ then represents the (upper or lower) value function for the optimally controlled process \cite{Ca:Book,MR2179357,YZ:Book}. Our setting corresponds to the case of controlled drift in the presence of noise given by a pure-jump L\'evy process $X_t$ with generator $\mL$. For more details on this connection, see e.g. \cite{MR3931325,Ha:Book}. For the classical case of Brownian noise, corresponding to \eqref{eq:hjb} with $\mL$ replaced by $\Delta$, we refer to \cite[Chapter~IV.3]{MR2179357}, see also \cite{Ba:Book}.
We also mention that the equation is closely connected with the theory of large deviations of stochastic processes \cite{MR2260560,Ba:Book,BC13}, and the KPZ equation from physics \cite{KPZ86}, a stochastically forced version of the equation and an important example of an ill-posed problem that can be solved in Hairer's theory of regularity structures \cite{MR3274562}.

Studies of viscous HJ equations were initiated almost half a century ago, with the pioneering works of Kru\v{z}kov \cite{MR0404870} and Crandall and Lions \cite{MR690039}. The topic has been very active,
 and a sample of various aspects of the research concerning HJ equations
 can be found in \cite{MR4000845,MR3285244,MR2732926,MR3575590,MR4264951,MR4333510,MR3095206,MR2664465,MR4044675}, but this list is far from complete. A strong incentive for further investigation of HJ equations came from the theory of mean field games \cite{huang2006large,MR2295621}, which is a major motivation behind our work. Mean field games have also been studied for nonlocal operators \cite{MR3934106,MR4309434,MR4223351,fracnonsep}, but the setting of classical solutions usually requires strong assumptions on data. Our work 
 will facilitate analysis of mean field game systems in a lower regularity setting.

Let us present the literature more closely related to our methods and settings. We use general viscosity solution theory for nonlocal and mixed local-nonlocal operators as presented in Jakobsen and Karlsen \cite{MR2129093}. There are many other references on this topic including \cite{MR2243708,MR2422079,MR4244533}.
For $\mL = -(-\Delta)^{\alpha/2}$ Imbert \cite{MR2121115} established a comparison principle and Lipschitz estimates in the context of viscosity solutions, and in the case $H = H(Du)$, showed that mild solutions are smooth classical solutions. Improvements and extensions to general nondegenerate fractional operators $\mL$ can be found in \cite{MR4309434,ERJAR}, and elements of a first (suboptimal) Schauder theory is given in \cite{ERJAR} -- see also \cite{JR24}. 
The setting of mild solutions and Duhamel formula that we use here was also present in e.g. \cite{MR2471928,MR2259335,MR2373320}. Schauder estimates for linear parabolic equations involving quite general L\'evy-type operators were given by Mikulevi\v{c}ius and Pragarauskas \cite{MR1246036}. More recently, Dong, Jin and Zhang \cite{MR3803717} gave Schauder estimates for fully nonlinear equations involving nonlocal operators with the L\'evy measure comparable to the one of the fractional Laplacian. Our present Schauder estimates improve 
the regularity estimates in \cite{MR2121115,MR4309434,ERJAR} in several ways: We obtain more (and maximal) regularity under the same assumptions, we give new results for Hamiltonians $H$ of low regularity  (measured in Hölder scales) and with much more general growth in the gradient, and we give new, more precise and explicit, a priori estimates for high order Hölder norms.

There are several recent works on fractional HJ equations with low-regularity data, admitting solutions with unbounded gradient. Mild solutions for the HJ equation on $\R^d$ with $H=H(Du)=|Du|^r$ with critical diffusion, i.e. $\alpha=1$, were studied by Iwabuchi and Kawakami \cite{MR3623641}, for initial conditions in Besov spaces. Goffi \cite{MR4350574} considered forcing terms in $L^p$ and initial conditions which are continuous or are in Besov spaces on the torus, with power-type growth conditions on $H$. For the (deterministic) fractional KPZ equation with a forcing term on bounded domains with homogeneous Dirichlet conditions, Abdellaoui, Peral, Primo, and Soria \cite{MR4358140} gave the existence and nonexistence results under certain integrability assumptions on the initial condition and the forcing term, in relation to the power in the gradient term. Matioc and Walker \cite{matioc2023wellposednessquasilinearparabolicequations} established a general semigroup framework for quasilinear equations in time-weighted spaces, tracing the blow-up of certain norms, but it does not allow for the scale of H\"older spaces. Here we also mention the earlier paper by Benachour and Lauren\c{c}ot \cite{MR1720778}, which addressed the case of local diffusion.

The paper is organized as follows. We begin by covering the relevant notation. In Section \ref{sec::operator_L_and_heat_kernel} we list the assumptions on $\mathcal{L}$, give some examples, and establish regularizing effects of the heat kernel of $\mathcal{L}$. Sections \ref{sec::short_time_existence}, \ref{sec::full_schauder_regularity} and \ref{sec::long_time_existence} compose the main parts of the paper, covering the short-time existence, optimal regularity, and solutions being classical and long-time existence respectively. Finally, we discuss extensions and give closing remarks in Section \ref{sec::remarks_and_extensions}. Some technical results used throughout the paper are collected in the Appendix \ref{sec::app}. 
\medskip

\noindent \textbf{Notation.}
Below, $\mathbb{N} = \{1,2,\ldots\}$ and $d\in \mathbb{N}$. As usual, for $r\in \R$, the function $\lceil r\rceil$ gives the smallest integer greater than or equal to $r$ and $\lfloor r\rfloor$ gives the largest integer less than or equal to $r$.

Let $\kappa=(k_1,\dots, k_d)$, $k_i\in\N\cup\{0\}$, be a multi-index of order $|\kappa|=k_1 + \dots + k_d$  and define the spatial derivative 
    $\partial^\kappa:= \frac{\partial^{|\kappa|}}{\partial x_1^{k_1}\partial x_2^{k_2} \dots \partial x_d^{k_d}}$. We will sometimes use the subscript to indicate that an operator acts with respect to the variable in the subscript. By $D^m \varphi$, $m\in \{0,1,2,\ldots\}$, we mean the tensor of all derivatives of order $m$.
For $n\in\N\cup\{0\}$, let $C_b^n(\R^d)$ be the space of 
functions with $n$ continuous and bounded derivatives and norm
$\|\varphi \|_{C_b^n}=\sum_{j=0}^n \max_{|\kappa|=j}\|\partial^\kappa \varphi\|_\infty,$
where $\|\cdot \|_\infty$ is the $L^\infty$ (or $C_b$) norm. For $\gamma=n+\beta$, where $\beta\in(0,1)$, we define the H\"older space
$$C_b^\gamma(\R^d) :=\{\varphi\in C_b^n(\R^d) \ : \ \|\varphi \|_{C_b^\gamma}<\infty \},$$
where
$$\|\varphi \|_{C_b^\gamma} := \|\varphi \|_{C_b^n}+\max_{|\kappa|=n}[\partial^\kappa\varphi]_\beta \qquad \text{and} \qquad [\varphi]_{\beta}:= \sup_{\substack{x, h\in \R^d \\ h \neq 0}}\frac{
|\varphi(x+h)-\varphi(x)|}{|h|^{\beta}}.$$
Note that $[\cdot]_\beta$ defines a seminorm for all  $\beta \in (0,1]$ and that $[\varphi]_1$ is the Lipschitz constant of $\varphi$. We also let $[\varphi]_0 = \|\varphi\|_\infty$. 

Norms, seminorms and operators act on the unspecified arguments of the function they are applied on, e.g. for a function $\varphi\colon A\times B \rightarrow \R$ we will write $\|\varphi\|_\infty := \sup_{(a,b)\in A\times B } |\varphi(a,b)|$, while $\|\varphi(\cdot,b)\|_\infty :=\sup_{a\in A} |\varphi(a,b)|$, and so on. 

\section{Optimal regularizing effect of the fractional heat kernel}\label{sec::operator_L_and_heat_kernel}
In this section we introduce the heat kernel and the heat semigroup associated with the nonlocal diffusion operator $\mL$ defined in \eqref{eq:L}, and prove optimal parabolic regularizing effect of the semigroup under general assumptions. Our results are equivalent to proving optimal regularity results for the fractional heat equation $\partial_tv -\mL v = f$.
 Taking the Fourier transform of \eqref{eq:L}, we find that 
    $\mathcal{F}(\mathcal{L}\varphi)(\xi)=\widehat{\mathcal{L}}(\xi) \widehat{\varphi}(\xi)$, 
where the symbol 
\begin{align*}
    \widehat{\mathcal{L}}(\xi)=  \xi A \xi + \int_{\R^d}(1-e^{i \xi \cdot z}+i\xi \cdot z \mathbf{1}_{|z|<1})\ d\mu(z),\qquad \xi\in \R^d.
\end{align*}
 The heat kernel $p_t$ of 
$\mathcal{L}$ is then given by
\begin{align*}
    \mathcal{F}p_t(\xi)=  (e^{-t\widehat{\mathcal{L}}(\cdot)})(\xi),\qquad \xi\in \R^d.
\end{align*} 
By the Lévy--Khintchine theorem \cite[Theorem 1.2.14]{MR2072890} $p_t$ are probability measures for $t>0$, and 
 they converge weakly to the Dirac delta as $t\rightarrow 0$  \cite[Proposition 1.4.4]{MR2072890}. We \textit{assume} that the heat kernel is absolutely continuous with respect to the Lebesgue measure -- this is contained in the assumption \ref{NDa} below. With the convention that $p_t(x)\, dx :=p_t(dx)$, we then have $(\partial_t  -\mathcal{L})p_t =0$ in the classical sense. The semigroup associated with the heat kernel is
$$P_t\varphi(x) := p_t*\varphi(x)=\int_{\R^d} p_t(x-y)\varphi(y) dy,\qquad t\geq 0,\ x\in \R^d.$$
For $\varphi \in C^2_c(\R^d)$, the operator $\mL$ coincides with the generator of $P_t$ \cite[Theorem~31.5]{MR3185174}. 

We will assume that the nonpositive operator $\mL$ is a non-degenerate operator of order $\alpha$, a condition we formulate in terms of its heat kernel.\bigskip
\makeatletter
\newcommand{\myitem}[1]{%
\item[#1]\protected@edef\@currentlabel{#1}%
}
\makeatother
\begin{enumerate}
\myitem{$\mathbf{(L1)}$}
\label{NDa} 
The heat kernel $p_t\in C_b^\infty(\R^d)$ and there exist $\alpha\in(1,2]$ and $c_0>0$ such that for $t\in (0,T]$,
        \begin{align}\label{eq:NDainequality}
        \int_{\R^d} |Dp_t(y)| dy \leq c_0t^{-\frac{1}{\alpha}}.
      \end{align}
\end{enumerate}
For the linear estimates of this section we can relax this assumption:
\bigskip
\begin{enumerate}
\myitem{$\mathbf{(L1')}$}
\label{NDa'} 
The heat kernel $p_t\in C_b^\infty(\R^d)$ and satisfies \eqref{eq:NDainequality} for some $\alpha\in(0,2]$ and every $t\in (0,T]$.
\end{enumerate}

\smallskip
\begin{rem}\label{rem:NDam}
Let $m\in\N$. Since $D^mp_t=(Dp_{\frac tm})^{*m}$, by Young's inequality for convolutions and \ref{NDa'},
       \begin{align}\label{NDam}
        \int_{\R^d} |D^mp_t(y)| dy \leq ( m^{\frac1\alpha} c_0)^mt ^{-\frac{m}{\alpha}}.
      \end{align}
We will mostly use $m=2$ (for short-time existence in Section \ref{sec:shorttime}) and $m=\lceil \alpha+\beta\rceil$ where $\beta$ is the H\"older continuity of the data given by \ref{assump:H} or \ref{assump:B3} (for Schauder regularity in Section \ref{sec::full_schauder_regularity}).  
\end{rem}
\begin{example}
\label{ex:NDa}
Operators $\mathcal{L}$ in \eqref{eq:L} satisfying \ref{NDa}, cf. \cite[Section~4]{MR4309434}. 
\begin{enumerate}
\item Theorem~4.3
in \cite{MR4309434} (see also \cite[Theorem~5.2]{MR4308627}): If $A=0$  and there is $\alpha\in (1,2)$ such that $\frac{d\mu}{dz}\approx |z|^{-d-\alpha}$ for $|z|\leq 1$,
then  $\mathcal{L}$ satisfies \ref{NDa}. Here $\mL$ is purely nonlocal with the L\'evy measure $\mu$ absolutely continuous in $|z|\leq1$, but {\em no assumption} on the tails ($|z|>1$).
An example is the fractional Laplacian $-(-\Delta)^{\alpha/2}$ with L\'evy triplet $(0,0,c_{d,\alpha}|z|^{-d-\alpha}\, dz)$. \medskip
    \item $\mL  = -(-\partial^2_{x_1})^{\alpha_1/2}-(-\partial^2_{x_2})^{\alpha_2/2} -\ldots -(-\partial^2_{x_d})^{\alpha_d/2}$ for $\alpha_i\in (1,2)$ satisfies \ref{NDa} with $\alpha = \min_i \alpha_i$. Here the corresponding L\'evy measure $\mu$ is not absolutely continuous.
    \smallskip
    \item The Riesz--Feller operator on $\R$ corresponding to triplet $(0,0,|z|^{-1-\alpha}\textbf{1}_{(0,\infty)}(z)\, dz)$ satisfies \ref{NDa}, see \cite[Lemma~2.1 (G7) and Proposition~2.3]{MR3360395}. This corresponds to a spectrally one-sided process and a very non-symmetric $\mL$.\smallskip
    \item The generator of the CGMY process in Finance satisfies \ref{NDa}, see \cite[Example~4.4]{MR4309434}. This is an example of a tempered and non-symmetric process and $\mL$.\smallskip
    \item Operators $\mL$ with strictly positive definite matrix $A$ satisfy \ref{NDa} with $\alpha=2$. Here the principal part of $\mL$ is the non-degenerate second derivative term.\smallskip
        \item If $\mL$ satisfies \ref{NDa} and $\tilde \mL$ is any L\'evy operator \eqref{eq:L}, then $\mL + \tilde\mL$ satisfies \ref{NDa}. One example 
        is the degenerate second order operator $\tilde \mL \phi = \Div(AD\phi)$ where $A\geq0$ is not invertible. Here there will be a gain of regularity of $\alpha$ derivatives in all directions, but as we will see in Section \ref{subseq:directions}, we gain 2 full derivatives in non-degenerate directions of $A$. 
        
    \medskip
\end{enumerate}
\end{example}

We now show that the heat semigroup $P_t$ has optimal smoothing properties in H\"older scales, in the sense that 
there is a gain of $\alpha$ derivatives on the input data. We explicitly quantify the rates of blow-up for our H\"older estimates as $t\to 0$.

\begin{thm} \label{lem::lem1}
    Assume \ref{NDa'}, $\beta \in [0,1]$, and 
    $\varphi \in C_b^{\beta}(\R^d)$. Then
    for $k\in\N$, 
    $\gamma\in(0,1)$, and  $t\in(0,T]$,
    \begin{align*}
     &(i)\ \ 
        \big\| D^k P_t \varphi(\cdot) \big\|_\infty \leq c_{k,\beta,d}[\varphi]_{\beta}t^{-\frac{k-\beta}{\alpha}}, \hspace{9.5cm}\\[0.2cm] 
        & (ii) \ \  [D^kP_t\varphi(\cdot)]_{\gamma}\leq2^{1-\gamma}c_{k+1
        ,\beta,d}[\varphi]_\beta t^{-\frac{k+\gamma
        -\beta}{\alpha}},
    \end{align*}
    where $c_{k,\beta,d}= 
    ( k^{\frac1\alpha} c_0)^{k-\beta}$
     for $\beta\in\{0,1\}$, $c_{k,\beta,d}= 
    ( k^{\frac1\alpha} c_0)^{k-\beta}
    C_{\beta,d}$ for $\beta\in(0,1)$, and $C_{\beta,d}$ is given by Theorem~\ref{thm:holder_interpolation}.
\end{thm}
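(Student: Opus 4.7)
\medskip

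\noindent\textbf{Proof plan.} The strategy is to prove (i) first by treating the endpoint cases $\beta\in\{0,1\}$ directly and interpolating for $\beta\in(0,1)$, and then to obtain (ii) from (i) via a standard Kolmogorov-type interpolation of $[\,\cdot\,]_\gamma$ between $L^\infty$ and Lipschitz seminorms.

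For (i), the starting identity is $D^k P_t \varphi = (D^k p_t)\ast \varphi$, so Young's convolution inequality gives
\begin{equation*}
    \|D^k P_t \varphi\|_\infty \;\leq\; \|D^k p_t\|_{L^1}\,\|\varphi\|_\infty,
\end{equation*}
and by Remark~\ref{rem:NDam} we have $\|D^k p_t\|_{L^1}\leq (k^{1/\alpha}c_0)^k t^{-k/\alpha}$. Since $[\varphi]_0=\|\varphi\|_\infty$, this settles the case $\beta=0$. For $\beta=1$, I would transfer one derivative onto $\varphi$ (valid for Lipschitz $\varphi$, whose weak derivative lies in $L^\infty$ with norm $[\varphi]_1$), writing $D^k P_t\varphi = (D^{k-1}p_t)\ast D\varphi$ and applying \eqref{NDam} at order $k-1$; since $(k-1)^{1/\alpha}\leq k^{1/\alpha}$, the constant is bounded by $c_{k,1,d}=(k^{1/\alpha}c_0)^{k-1}$.

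The main obstacle is the intermediate case $\beta\in(0,1)$, which I expect to handle using Theorem~\ref{thm:holder_interpolation} together with a scale-balancing argument. Concretely, I would split $\varphi = \varphi_\epsilon^s + \varphi_\epsilon^r$ at a scale $\epsilon>0$ (e.g. by mollification) with
\begin{equation*}
    \|D\varphi_\epsilon^s\|_\infty \leq C_{\beta,d}[\varphi]_\beta\,\epsilon^{\beta-1},\qquad \|\varphi_\epsilon^r\|_\infty\leq C_{\beta,d}[\varphi]_\beta\,\epsilon^{\beta},
\end{equation*}
apply the $\beta=1$ bound to $\varphi_\epsilon^s$ and the $\beta=0$ bound to $\varphi_\epsilon^r$, and optimize at $\epsilon = t^{1/\alpha}$. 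Both terms then scale like $t^{-(k-\beta)/\alpha}$, with the combined constant $(k^{1/\alpha}c_0)^{k-\beta}C_{\beta,d}$ matching $c_{k,\beta,d}$. The exact form of $C_{\beta,d}$ should come directly from the invoked interpolation theorem, so the work reduces to the routine algebra of combining the two endpoint estimates.

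For (ii), I would use the elementary interpolation valid for any $f\in C_b^1(\R^d)$ and $\gamma\in(0,1)$,
\begin{equation*}
    [f]_\gamma\leq 2^{1-\gamma}\,\|f\|_\infty^{\,1-\gamma}\,[f]_1^{\,\gamma}\leq 2^{1-\gamma}\,\|f\|_\infty^{\,1-\gamma}\,\|Df\|_\infty^{\gamma},
\end{equation*}
which follows by comparing the two trivial bounds $|f(x+h)-f(x)|\leq 2\|f\|_\infty$ and $|f(x+h)-f(x)|\leq \|Df\|_\infty|h|$ at the balancing scale. Applying this with $f=D^k P_t\varphi$ and inserting (i) at orders $k$ and $k+1$ yields a $t$-exponent $-(k-\beta)(1-\gamma)/\alpha - (k+1-\beta)\gamma/\alpha = -(k+\gamma-\beta)/\alpha$, which is exactly the stated rate. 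Since the constants satisfy $c_{k,\beta,d}\leq c_{k+1,\beta,d}$ (monotonicity in $k$), the combined prefactor $c_{k,\beta,d}^{1-\gamma}c_{k+1,\beta,d}^{\gamma}$ is bounded by $c_{k+1,\beta,d}$, giving the stated constant $2^{1-\gamma}c_{k+1,\beta,d}$. Given the endpoint bounds and the interpolation theorem, the only non-mechanical step is the $\beta\in(0,1)$ part of (i); everything else is convolution algebra and scale-balancing.
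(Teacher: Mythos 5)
Your proposal is correct, and it coincides with the paper's argument everywhere except in the one step you yourself flag as non-mechanical: part (i) for $\beta\in(0,1)$. There the paper does \emph{not} decompose $\varphi$; instead it first proves the seminorm bounds $[D^{m}P_t\varphi]_{\beta}\leq ((m\vee1)^{1/\alpha}c_0)^{m}[\varphi]_{\beta}t^{-m/\alpha}$ directly from $|D^mP_t\varphi(x)-D^mP_t\varphi(x')|\leq \|D^mp_t\|_{L^1}[\varphi]_\beta|x-x'|^\beta$, and then applies the second inequality of Lemma~\ref{thm:holder_interpolation}, $\|Dg\|_\infty\leq C_{\beta,d}[g]_\beta^{\beta}[Dg]_\beta^{1-\beta}$, with $g=D^{k-1}P_t\varphi$; this produces exactly the geometric-mean constant $(k^{1/\alpha}c_0)^{k-\beta}C_{\beta,d}$ with the $C_{\beta,d}$ named in the statement. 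Your mollification split at scale $\epsilon=t^{1/\alpha}$ is a perfectly valid alternative and yields the same power of $t$, but it produces an additive constant depending on the mollifier rather than the stated one, so it proves the estimate only up to the value of $c_{k,\beta,d}$ (note also that in your write-up you announce you will "use Theorem~\ref{thm:holder_interpolation}" for this case but then never do). Two smaller remarks: for $\beta=1$ the paper avoids your transfer of a weak derivative onto $\varphi$ by simply bounding $\|D^kP_t\varphi\|_\infty=[D^{k-1}P_t\varphi]_1$ via the seminorm estimate above — equivalent but cleaner; and your monotonicity claim $c_{k,\beta,d}\leq c_{k+1,\beta,d}$ in (ii) requires $k^{1/\alpha}c_0\geq1$, which can be arranged by enlarging $c_0$ in \ref{NDa} (the paper implicitly does the same). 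Part (ii) itself is identical to the paper's one-line proof.
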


Note that when $\gamma=\alpha+\beta- k$ and $k< \alpha+\beta$, then 
the estimate in {\em(ii)} is reminiscent of standard semigroup results of the type $\|\mL P_t \varphi\|\leq Ct^{-1}\|\varphi\| $.
\begin{proof}
{\em (i)} \ Consider first $\beta \in(0,1]$. Let $m\in\N\cup\{0\}$ and note that by \ref{NDa'} (see Remark \ref{rem:NDam}) (or $\|p_t\|_{L^1}=1$ when $m=0$), 
$x,x' \in \R^d$,
\begin{align*}
    |D^{m}P_t\varphi(x) - D^mP_t\varphi(x')| &\leq \int_{\R^d}|D^mp_t(y)||\varphi(x-y) - \varphi(x'-y)|\, dy\\
    &
    \leq |x-x'|^\beta [\varphi]_{\beta} \|D^mp_t\|_{L^1} \leq |x-x'|^\beta [\varphi]_{\beta}  ((m\, \vee \, 1)^{\frac1\alpha} c_0)^{m}  t^{-\frac{m}{\alpha}},
\end{align*}
that is, 
$[D^{m}P_t\varphi]_{\beta} \leq  ((m\, \vee \, 1)^{\frac1\alpha} c_0)^{m}[\varphi]_{\beta}t^{-\frac{m}{\alpha}}$.
Since $D^mp_t\in L^1(\R^d)$, we could interchange derivatives and integrals using standard arguments for convolutions. 
Recall that $k\geq 1$. When $\beta =1$, this yields
\begin{align*}
    \|D^k P_t \varphi\|_\infty= [D^{k-1}P_t\varphi]_{1}\leq ( 
    k^{\frac1\alpha} c_0)^{k-1} [\varphi]_1 t^{-\frac{k-1}{\alpha}},
\end{align*}
while for $\beta \in(0,1)$, by interpolation (Lemma \ref{thm:holder_interpolation}), 
we get
\begin{align*}
    \|D^kP_t\varphi\|_{\infty} &\leq C_{\beta, d}[D^{k-1}P_t\varphi]_{\beta}^\beta  [D^kP_t\varphi]_{\beta}^{1-\beta}
   \leq C_{\beta, d} 
    ( k^{\frac1\alpha} c_0)^{k-\beta}
    [\varphi]_{\beta}t^{-\frac{k-\beta}{\alpha}}
    ,
\end{align*}
When $\beta=0$, we directly get that 
\begin{align*}
    |D^kP_t\varphi(x)| &\leq \int_{\R^d}|D^kp_t(y)||\varphi(x-y)|\, dy 
    \leq \|\varphi\|_{\infty} ( k^{\frac1\alpha} c_0)^{k} t^{-\frac{k}{\alpha}}.
\end{align*}

\noindent{\em (ii)} \ Interpolation (Theorem \ref{thm:holder_interpolation} with $\eta=1$) and {\em (i)} yield the estimate.
\end{proof}

Next we look at the regularizing effect of the heat semigroup on a space-time convolution term coming from the Duhamel formula for heat equations with right-hand sides $f$.

\begin{thm}
\label{prop:f_bound}
    Assume \ref{NDa'}, $\beta \in [0,1]$, $\alpha+\beta \in (1,2) \cup (2,3)$, 
    $\gamma \in [0,1)$,  
    and for $t \in (0,T]$, $f(t,\cdot)\in C_b^{\beta}(\R^d)$, $\|f(t,\cdot)\|_{C^\beta_b} = O(t^{-\gamma})$ as $t\to 0$, and 
    $$w(t,x):=\int_0^t P_{t-s}[f(s)](x)\, ds.$$ 
    Then for $t\in(0,T]$, if
     $k=1$ or $k=\lfloor\alpha+\beta\rfloor$, 
    \smallskip 
    \begin{align*}
         &(i)\quad  \|D^kw(t,\cdot) \|_\infty \leq 
         c^{(k)}_{\gamma,\alpha,\beta} \sup_{s\in(0,T]} \|s^\gamma f(s,\cdot) \|_{C_b^\beta}t^{-\gamma}t^{\frac{\alpha+\beta-k}{\alpha}},
         \hspace{6.5cm}
         \\
         \intertext{and if $ k = \lfloor \alpha+\beta\rfloor $ and $\{\alpha+\beta\}=\alpha+\beta-\lfloor \alpha+\beta\rfloor$ we also have\smallskip}
         & (ii) \quad [D^{\lfloor \alpha+\beta\rfloor}w(t,\cdot)]_{\{\alpha+\beta\}
         } \leq \overline{c}
         _{\gamma,\alpha,\beta} \sup_{s\in(0,T]} \|s^{\gamma}f(s,\cdot) \|_{C_b^\beta}t^{-\gamma}, 
    \end{align*}
    with constants $c^{(k)}_{\gamma,\alpha,\beta}$ and $\overline{c}_{\gamma,\alpha,\beta}$ only depending on $\alpha,\beta,\gamma,k$.
\end{thm}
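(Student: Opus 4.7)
The plan is to differentiate under the integral sign and reduce both estimates to Beta-type time integrals against the semigroup bounds already proved in Theorem~\ref{lem::lem1}. Differentiation is legitimate because Remark~\ref{rem:NDam} supplies $D^k p_{t-s},\,D^{k+1}p_{t-s}\in L^1(\R^d)$ for $s<t$, and $f(s,\cdot)$ is bounded for each $s>0$; hence
\[
D^k w(t,x)=\int_0^t D^k P_{t-s}[f(s,\cdot)](x)\,ds.
\]
Writing $M:=\sup_{s\in(0,T]}\|s^\gamma f(s,\cdot)\|_{C_b^\beta}$, so that $[f(s,\cdot)]_\beta\leq M s^{-\gamma}$, everything reduces to controlling $\int_0^t s^{-\gamma}(t-s)^{-\lambda}\,ds=B(1-\gamma,1-\lambda)\,t^{1-\gamma-\lambda}$, which is finite iff $\gamma<1$ and $\lambda<1$.

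For part (i), I would insert the pointwise bound $\|D^k P_{t-s}[f(s,\cdot)]\|_\infty\leq c_{k,\beta,d}[f(s,\cdot)]_\beta (t-s)^{-(k-\beta)/\alpha}$ from Theorem~\ref{lem::lem1}(i), bound $[f(s,\cdot)]_\beta$ by $Ms^{-\gamma}$, and integrate. The resulting exponent $t^{1-\gamma-(k-\beta)/\alpha}$ rearranges to $t^{-\gamma}t^{(\alpha+\beta-k)/\alpha}$, exactly the claim. The integrability $(k-\beta)/\alpha<1$ is automatic for $k=1$ (since $\alpha>1$), and for $k=\lfloor\alpha+\beta\rfloor$ it is precisely what the hypothesis $\alpha+\beta\notin\{2,3\}$ secures.

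For part (ii), set $k=\lfloor\alpha+\beta\rfloor$ and $\theta=\{\alpha+\beta\}\in(0,1)$, fix $x,h\in\R^d$ with $h\neq 0$, and dichotomize on $|h|^\alpha$ versus $t$. If $|h|^\alpha\geq t/2$, bound the increment trivially by $2\|D^k w(t,\cdot)\|_\infty$, invoke part~(i), and absorb $t^{\theta/\alpha}$ using $|h|^\theta\geq(t/2)^{\theta/\alpha}$. If $|h|^\alpha<t/2$, split the time integral at $t_*:=t-|h|^\alpha\in[t/2,t)$. On $[t_*,t]$ I would bound the increment of $D^k P_{t-s}[f(s,\cdot)]$ at $x+h$ and $x$ by twice the sup norm from Theorem~\ref{lem::lem1}(i) with $k$ derivatives; after replacing $s^{-\gamma}$ by $2^\gamma t^{-\gamma}$, the remaining $\int_0^{|h|^\alpha}u^{-(k-\beta)/\alpha}\,du$ integrates to a constant times $|h|^{\alpha+\beta-k}=|h|^\theta$. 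On $[0,t_*]$ I would trade a derivative for an increment,
\[
\bigl|D^k P_{t-s}[f(s,\cdot)](x+h)-D^k P_{t-s}[f(s,\cdot)](x)\bigr|\leq |h|\,\|D^{k+1}P_{t-s}[f(s,\cdot)]\|_\infty,
\]
and combine with Theorem~\ref{lem::lem1}(i) for $k+1$ derivatives. The singularity $(t-s)^{-(k+1-\beta)/\alpha}$ now has exponent strictly greater than $1$, but integrating it against $s^{-\gamma}$ over $[0,t_*]$ (splitting further at $t/2$) yields a bound of order $|h|^{\alpha(1-(k+1-\beta)/\alpha)}t^{-\gamma}=|h|^{\theta-1}t^{-\gamma}$, and the prefactor $|h|$ brings this to $|h|^\theta t^{-\gamma}$, as required.

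The main obstacle is the bookkeeping in part (ii): using Theorem~\ref{lem::lem1}(ii) with H\"older exponent $\theta$ directly produces the critical singularity $(t-s)^{-1}$, which is not integrable in time. The remedy is to trade the $|h|^\theta$-H\"older control either for a sup bound using $k$ derivatives (near $s=t$) or for a $|h|$-Lipschitz bound using $k+1$ derivatives (far from $s=t$), with the break point $t_*=t-|h|^\alpha$ chosen exactly so that both contributions collapse to $|h|^\theta t^{-\gamma}$ and the endpoint exponent $\alpha+\beta\in\{2,3\}$ is the only obstruction to the whole scheme.
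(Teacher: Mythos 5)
Your proposal is correct and follows essentially the same route as the paper: part (i) via the $L^\infty$ bounds of Theorem~\ref{lem::lem1} and a Beta-function time integral, and part (ii) via the diagonal splitting of the time integral at $t-|h|^{\alpha}$, using the $k$-derivative sup bound near $s=t$ and trading the increment for $|h|$ times the $(k{+}1)$-derivative bound away from $s=t$. The only (harmless) cosmetic differences are that you dichotomize at $|h|^{\alpha}\ge t/2$ rather than $|h|^{\alpha}\ge t$ and invoke part (i) directly for the trivial case instead of recomputing the integral.
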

\begin{rem}\label{rem:w-thm}
   (a) By assumptions $\alpha+\beta\in(1,3)$, so 
   $\lfloor \alpha+\beta\rfloor$ 
    is either $1$ or $2$.
   \smallskip

\noindent (b) We do not consider 
   $\alpha+\beta\in \mathbb{N}$ in this paper. It is known that for equations of the form $\mL u = f$ with $\alpha+\beta\in \mathbb{N}$, it is possible that $u\notin C^{\alpha+\beta}_b(\R^d)$ and even $D^{\alpha+\beta-1}u$ is not Lipschitz, see \cite[Section~5]{MR4194536}. Repeating the proof below in this case
   yields an additional logarithmic factor in the modulus of continuity of $D^{\alpha+\beta-1}u$. 
  \smallskip
  
  \noindent (c) From the proofs it follows that
    \begin{align*}
    &c^{(k)}_{\gamma,\alpha,\beta}= c_{k,\beta,d} B(1-\gamma,\tfrac{\alpha+\beta-k}{\alpha}), \\  
    &
    \overline{c}_{\gamma,\alpha,\beta}
    =  4c_{
    \lfloor \alpha+\beta\rfloor,\beta,d}\big(\tfrac 1{1-\gamma} + \tfrac{\alpha}{\{\alpha+\beta\}
    }\big)\vee 2\Big( c_{\lfloor \alpha+\beta\rfloor
    ,\beta,d}B(1-\gamma,\tfrac{\{\alpha+\beta\}}{\alpha}) + c_{\lfloor \alpha+\beta\rfloor+1,\beta,d}(\tfrac{\alpha}{
    1-\{\alpha+\beta\}
    }+\tfrac{1}{1-\gamma})\Big),
\end{align*}
    with $c_{k,\beta,d}$ from 
    Theorem \ref{lem::lem1} and 
 $B(s_1, s_2)=\int_0^1 x^{s_1-1}(1-x)^{s_2-1} \, dx$ for $s_1,s_2>0$ -- the beta function.
For $\gamma=0$ we have (slightly better) constants:
        $c^{(k)}_{0,\alpha,\beta} = \tfrac{\alpha c_{k,\beta,d}}{\alpha+\beta-k}$, 
        $\overline{c}_{0,\alpha,\beta}= \alpha  \big(\tfrac{c_{k+1,\beta,d}}{k+1-(\alpha+\beta)}+\tfrac{2c_{k,\beta,d}}{\alpha+\beta-k}\big)$.
\end{rem}

The following proof is adapted from the arguments in \cite{deraynal2019schauder}.
\begin{proof}
    {\em (i)} \ Let $ t\in (0,T]$, $x\in\R^d$.
    Using Theorem \ref{lem::lem1} and setting $z=\frac{s}{t}$, 
    \begin{align*}
        |D^kw(t,x)| &\leq \int_{0}^t s^{-\gamma}|D^kP_{t-s}[s^{\gamma} f(s)](x)|\, ds  \\&\leq  \sup_{s \in (0,T]} \|s^{\gamma} f(s,\cdot) \|_{C_b^\beta} c_{k,\beta,d} \int_0^t s^{-\gamma}(t-s)^{-\frac{k-\beta}{\alpha}}\, ds \\ 
        &= t^{\frac{\alpha+\beta-k}{\alpha}-\gamma} \sup_{s\in(0,T]} \|s^{\gamma} f(s,\cdot)\|_{C_b^\beta} c_{k,\beta,d} \int_0^1 z^{-\gamma}(1-z)^{-\frac{k-\beta}{\alpha}} \, dz.
    \end{align*}
    Interchanging derivative and integral in the first line holds
    by the Dominated Convergence Theorem. 
\smallskip

\noindent {\em (ii)} \ Consider first the case $\alpha+\beta \in (1,2)$ and
$k=\lfloor\alpha+\beta\rfloor=1$.  
We use Theorem \ref{lem::lem1} repeatedly.
    Let $x, x' \in \R^d$ and
    $$\Delta:=\Big|\int_{0}^tDP_{t-s}[f(s)](x)\, ds- \int_{0}^tDP_{t-s}[f(s)](x')\, ds\Big|.$$
    If $|x-x'|^{\alpha}\geq t$, 
    then 
    \begin{align*}
        \Delta & \leq 2\int_{0}^t\big\|DP_{t-s}[f(s)]\big\|_\infty ds   \leq 2 c_{1,\beta,d} \sup_{s\in(0,T]}\|s^{\gamma} f(s,\cdot) \|_{C_b^\beta} \int_0^{t}s^{-\gamma}(t-s)^{-\frac{1-\beta}{\alpha}}ds \\
        &\leq 2c_{1,\beta,d}\sup_{s\in(0,T]}\|s^{\gamma} f(s,\cdot) \|_{C_b^\beta} \Big[\big(\tfrac{t}{2} \big)^{-\frac{1-\beta}{\alpha}} \int_0^{\frac{t}{2}}s^{-\gamma}ds +\big(\tfrac{t}{2}\big)^{-\gamma} \int_{\frac{t}{2}}^t (t-s)^{-\frac{1-\beta}{\alpha}}ds \Big]\\
        &=2c_{1,\beta,d}\sup_{s\in(0,T]}\|s^{\gamma} f(s,\cdot) \|_{C_b^\beta} \Big[\big(\tfrac{t}{2} \big)^{-\frac{1-\beta}{\alpha}} \tfrac{1}{1-\gamma}\big( \tfrac{t}{2}\big)^{1-\gamma} +\big(\tfrac{t}{2}\big)^{-\gamma} \tfrac{\alpha }{\alpha+\beta-1} \big(\tfrac{t}{2} \big)^{\frac{\alpha+\beta-1}{\alpha}}\Big] \\
        &=t^{-\gamma}t^{\frac{\alpha+\beta-1}{\alpha}}\sup_{s\in(0,T]}\|s^{\gamma} f(s,\cdot) \|_{C_b^\beta} \;2^{\tfrac{1-\beta}{\alpha}+\gamma}\;c_{1,\beta,d}  \Big(\tfrac{1}{1-\gamma} +\tfrac{\alpha}{\alpha+\beta-1} \Big) \\
         &\leq t^{-\gamma}|x-x'|^{\alpha+\beta-1}\sup_{s\in(0,T]}\|s^{\gamma} f(s,\cdot) \|_{C_b^\beta} \;4\;c_{1,\beta,d}  \Big(\tfrac{1}{1-\gamma} +\tfrac{\alpha}{\alpha+\beta-1} \Big).
    \end{align*}
    When $t_0:=|x-x'|^{\alpha}< t$, we split the integral  
    as $\int_0^t  = \int_0^{t-t_0} +\int_{t-t_0}^t $ obtaining $\Delta\leq \Delta_1+\Delta_2$. 
    By two changes of variables, including
 $z=\frac{s}{t_0}$, 
    \begin{align*}
       \Delta_2 & :=\Big| \int_{t-t_0}^t DP_{t-s}[f(s)](x)\, ds-  \int_{t-t_0}^t DP_{t-s}[f(s)](x')\, ds\Big| \\ 
       &\leq 2 \int_{0}^{t_0}
        \|DP_{s}[f(t-s)]\|_\infty\, ds \\
        &\leq 2c_{1,\beta,d}  \sup_{s\in(0,T]}\|s^{\gamma} f(s,\cdot) \|_{C_b^\beta} \int_0^{t_0}(t-s)^{-\gamma}s^{-\frac{1-\beta}{\alpha}}\,ds \\
        &= t^{-\gamma} 
      t_0^{\frac{\alpha+\beta-1}\alpha}2c_{1,\beta,d}\sup_{s\in(0,T]}\|s^{\gamma} f(s,\cdot) \|_{C_b^\beta} \int_0^{1}\big(1-\tfrac{t_0}{t}z\big)^{-\gamma}z^{-\frac{1-\beta}{\alpha}}dz \\
        & \leq t^{-\gamma} |x-x'|^{\alpha+\beta-1}2c_{1,\beta,d} \sup_{s\in(0,T]}\|s^{\gamma} f(s,\cdot) \|_{C_b^\beta} \int_0^{1}(1-z)^{-\gamma}z^{-\frac{1-\beta}{\alpha}}dz.
    \end{align*}
    Then we note that 
    \begin{align*}
        \Delta_1&:=\Big| \int_0^{t-t_0} DP_{t-s}[f(s)](x)\, ds- \int_0^{t-t_0} DP_{t-s}[f(s)](x')\, ds\Big| \\
        &\leq |x-x'| \int_0^{t-t_0}\int_0^1 |D^2P_{t-s}[f(s)](x+\lambda(x-x'))|\,d\lambda\, ds \\
        & \leq  c_{2,\beta,d} \sup_{s\in(0,T]} \|s^{\gamma}f(s,\cdot) \|_{C_b^\beta} |x-x'|\: I_\gamma, 
    \end{align*}
    where $I_\gamma:=\int_{t_0}^t (t-s)^{-\gamma} s^{-\frac{2-\beta}{\alpha}}\, ds$. For $\gamma = 0$,
    \begin{align*}
        I_0= \tfrac{\alpha}{2-(\alpha+\beta)} (t_0^{\frac{\alpha+\beta-2}{\alpha}}-t^{\frac{\alpha + \beta -2}{\alpha}})\leq \tfrac{\alpha}{2-(\alpha+\beta)}t_0^{\frac{\alpha+\beta-2}{\alpha}}.
    \end{align*}
    For $\gamma \in(0,1)$ we consider two cases. For $t_0\geq \frac{t}{2}$, 
    \begin{align*}
         I_\gamma & 
         \leq  t_0^{-\frac{2-\beta}{\alpha}} \int_{\frac{t}{2}}^t (t-s)^{-\gamma} ds 
         = 
         t_0^{\frac{\beta-2}{\alpha}}  \tfrac1{1-\gamma} \big(\tfrac t 2\big)^{1-\gamma} \leq \tfrac{1}{1-\gamma} \big(\tfrac t 2\big)^{-\gamma}t_0^{\frac{\alpha+\beta-2}{\alpha}},
    \end{align*}
    while for $t_0< \frac{t}{2}$,
        \begin{align*}
         I_\gamma 
         &\leq \big(\tfrac{t}{2} \big)^{-\gamma}\int_{t_0}^{\frac t2} s^{-\frac{2-\beta}{\alpha}}\, ds +\big(\tfrac{t}{2} \big)^{-\frac{2-\beta}{\alpha}}\int_{\frac t2}^{t}(t-s)^{-\gamma} \, ds\\
         &\leq \tfrac{\alpha}{2-\alpha-\beta}\big(\tfrac{t}{2} \big)^{-\gamma}t_0^{-\frac{2-\alpha-\beta}{\alpha}} + \tfrac{1}{1-\gamma}\big(\tfrac{t}{2} \big)^{-\frac{2-\alpha-\beta}{\alpha}-\gamma}\\
         &\leq 2^\gamma\big(\tfrac{\alpha}{2-\alpha-\beta} + \tfrac{1}{1-\gamma}\big)t^{-\gamma}t_0^{-\frac{2-\alpha-\beta}{\alpha}}.
    \end{align*}
Since $t_0=|x-x'|^\alpha$, in all cases we have shown that 
$$\Delta\leq \overline{c}
_{\gamma,\alpha,\beta}\sup_{s\in(0,T]} \|s^{\gamma}f(s,\cdot) \|_{C_b^\beta}t^{-\gamma}|x-x'|^{\alpha+\beta-1},$$
which concludes the proof for $\alpha+\beta \in (1,2)$.\smallskip
    
    The case $\alpha+\beta \in (2,3)$, $k=\lfloor\alpha+\beta\rfloor=2$ is similar -- 
    we replace $D$ with $D^2$ in the definition of~$\Delta$.
\end{proof}
\begin{rem}
    It is well known that the solutions to the inhomogeneous heat equation for $\mL$,
   \begin{align}\label{linhe}
\left\{
    \begin{aligned}
         \partial_t u -\mathcal{L}u&=f(t,x), \qquad &(t,x)&\in (0,T]\times \R^d, \\ 
        u(0,x)&=u_0(x), \qquad &x &\in \R^d,
    \end{aligned}
\right.
\end{align}
can be expressed as $u(t,x) = P_t u_0(x) + \int_0^t P_{t-s}[f(s)](x)\, ds$. Theorems~\ref{lem::lem1} and \ref{prop:f_bound} immediately give Schauder estimates for \eqref{linhe}. One of our main goals is to use Theorems~\ref{lem::lem1} and \ref{prop:f_bound} to obtain Schauder regularity for the nonlinear HJ equation \eqref{eq:hjb} in Section~\ref{sec::full_schauder_regularity}. The estimates for \eqref{linhe} can also be viewed as a special case of the results below.
\end{rem}

\section{Short-time existence for the viscous HJ equation}\label{sec:shorttime}\label{sec::short_time_existence}

In this section we prove short-time existence of mild solutions of the viscous HJ equation \eqref{eq:hjb}.
We consider two interesting cases separately: (I) a setting with Lipschitz $u_0$ that allows for global in time Lipschitz solutions, and (II) a setting with H\"older $u_0$ where the gradient blows up as $t\to 0$. In case (I) (see Theorem \ref{thm::hjb_sol_existence}) existence holds under rather general local Lipschitz conditions on the Hamiltonian $H$, while in case (II) (see Theorem \ref{thm::case_b_hjb_sol_existence}) there is an interplay between the blowup of the gradient and the growth in $p$ of the Lipschitz bounds for $H(t,x,p)$.


\begin{definition}
     A function $\varphi\in C((0,T]; C^1_b(\R^d))$ is a {\em mild solution} of \eqref{eq:hjb} if 
    \begin{align}
    \varphi(t,x)=
    P_t[u_0](x)+\int_0^t P_{t-s}\big[ H(s,\cdot,D\varphi(s,\cdot))
    \big](x)\, ds,\ t\in (0,T],\ x\in \R^d. \label{eq:duhamel_map}
\end{align}
\end{definition}

We give a classical result on translation in time of the Duhamel map, presented as in \cite[Lemma 2.30]{Amund}. The proof follows from the semigroup property and linearity of $P_t$.
\begin{lem}
\label{lem::duhamel_map_time_translation}
    Assume $T>0$, $g_0\in L^{\infty}(\R^d)$, $g\colon (0,T]\times \R^d\to\R$ is measurable, 
    $g(s,\cdot)\in L^\infty(\R^d)$ for $s\in(0,T]$, $\sup_{x\in\R^d}\int_0^{T}|P_{T-s}[g(s,\cdot)](x)| \, ds <\infty$, and define
    \begin{align*}
        \omega(t,x) = P_t[g_0](x)+\int_0^t P_{t-s}[g(s,\cdot)](x) \, ds, \quad t\in(0,T],\ x\in\R^d, 
    \end{align*}
    If $t_0, \tau, t_0+\tau \in(0,T]$,
    then 
    \begin{align*}
        \omega(t_0+\tau,x) = P_{\tau}[\omega(t_0,\cdot)](x)+\int_{t_0}^{t_0+\tau}P_{t_0+\tau-s}[g(s,\cdot)](x) \, ds.
    \end{align*}
\end{lem}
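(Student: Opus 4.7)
The plan is to start from the definition of $\omega(t_0+\tau,x)$, split the time integral at $t_0$, apply the semigroup property $P_{a+b}=P_a P_b$ to peel off a factor of $P_\tau$, and then pull $P_\tau$ outside the remaining integral using Fubini's theorem; the piece that comes out is precisely $P_\tau[\omega(t_0,\cdot)](x)$.

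More concretely, the steps I would carry out are the following. First, write
\begin{align*}
\omega(t_0+\tau,x) &= P_{t_0+\tau}[g_0](x) + \int_0^{t_0+\tau} P_{t_0+\tau-s}[g(s,\cdot)](x)\, ds\\
&= P_{t_0+\tau}[g_0](x) + \int_0^{t_0} P_{t_0+\tau-s}[g(s,\cdot)](x)\, ds + \int_{t_0}^{t_0+\tau} P_{t_0+\tau-s}[g(s,\cdot)](x)\, ds.
\end{align*}
Second, on the first two terms use the semigroup identity $P_{t_0+\tau}=P_\tau P_{t_0}$ and, for $s\in(0,t_0)$, $P_{t_0+\tau-s}=P_\tau P_{t_0-s}$, so that those two terms become
\begin{align*}
P_\tau\bigl[P_{t_0}[g_0]\bigr](x) + \int_0^{t_0} P_\tau\bigl[P_{t_0-s}[g(s,\cdot)]\bigr](x)\, ds.
\end{align*}
Third, pull $P_\tau$ out of the integral by Fubini: since $P_\tau$ is a convolution with the probability kernel $p_\tau$ and, by the hypothesis $\sup_x\int_0^T|P_{T-s}[g(s,\cdot)](x)|\,ds<\infty$ (invoked at $T$ replaced by $t_0$, which is admissible because the same kernel estimates work on any subinterval, or equivalently via direct bounds using $\|g(s,\cdot)\|_\infty$), the map $(s,y)\mapsto p_\tau(x-y)\,P_{t_0-s}[g(s,\cdot)](y)$ is absolutely integrable on $(0,t_0)\times\R^d$. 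Hence one may interchange the order of integration and linearity of $P_\tau$ gives
\begin{align*}
P_\tau\bigl[P_{t_0}[g_0]\bigr](x) + P_\tau\!\left[\int_0^{t_0} P_{t_0-s}[g(s,\cdot)]\, ds\right]\!(x) = P_\tau[\omega(t_0,\cdot)](x).
\end{align*}
Combining this with the untouched tail integral $\int_{t_0}^{t_0+\tau}P_{t_0+\tau-s}[g(s,\cdot)](x)\,ds$ yields the claim.

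The only nontrivial step is the application of Fubini, which is standard given the integrability hypothesis; everything else is algebraic manipulation using the semigroup property. I would verify measurability of $(s,x)\mapsto P_{t_0-s}[g(s,\cdot)](x)$ from the measurability of $g$ together with continuity of $(t,x)\mapsto p_t(x)$ on $(0,T]\times\R^d$ inherited from \ref{NDa}, which is routine.
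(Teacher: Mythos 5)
Your argument is correct and is exactly what the paper has in mind: the paper gives no proof beyond remarking that the claim ``follows from the semigroup property and linearity of $P_t$'', which is precisely your split-the-integral / semigroup / Fubini computation. The one point worth being careful about is that the Fubini step needs absolute convergence of $\int_0^{t_0}\int_{\R^d} p_\tau(x-y)\,|P_{t_0-s}[g(s,\cdot)](y)|\,dy\,ds$, i.e.\ integrability at the intermediate time $t_0$ rather than at $T$; this is implicit in $\omega(t_0,\cdot)$ being well defined and holds in every application in the paper, so your parenthetical justification is adequate.
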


\subsection{Short-time existence of solutions without gradient blow-up}\ \smallskip

\noindent Here we work under assumptions ensuring that the gradient of the solution, $\|Du(t,\cdot)\|_{\infty}$, is bounded as $t\to 0$:
\medskip
\begin{enumerate}    
\myitem{$\mathbf{(H_p)}$}\label{assump:H2} $H\colon [0,T]\times \R^d \times \R^d\to \R$ is continuous, and for each $R>0$, there are constants $H_0,L_R>0$ such that for $t\in(0,T)$, $x\in\R^d$, and $p,q \in B(0,R)$,
    $$|H(t,x,0)|\leq H_0 \qquad\text{and}\qquad|H(t,x,p)-H(t,x,q)| \leq L_R|p-q|.$$
\smallskip     
    \myitem{$\mathbf{(U_0)}$}\label{assump:H1} $u_0$ is bounded and Lipschitz.\medskip
\end{enumerate}

\begin{rem}
Under \ref{assump:H2}, the Hamiltonian $H$ is locally Lipschitz in $p$, uniformly in $(t,x)$. An example is $H(t,x,p)=b(t,x)g(p)+f(t,x)$ where $b,f\in C_b([0,T]\times\R^d)$ and $g$ is $C^1(\R^d)$ or locally Lipschitz.
\end{rem}

We have the following short-time existence result.
\begin{thm}[Short time existence (I)]
\label{thm::hjb_sol_existence}
    Assume \ref{NDa}, 
    \ref{assump:H2}, and \ref{assump:H1}.
    Then there is a $T>0$ for which there exists a unique mild solution $u\in C_b((0,T]; C_b^1(\R^d))$ of \eqref{eq:hjb}.
\end{thm}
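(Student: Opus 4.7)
The natural approach is a Banach fixed point argument for the Duhamel map
$$\Gamma[\varphi](t,x) = P_t[u_0](x)+\int_0^t P_{t-s}\big[H(s,\cdot,D\varphi(s,\cdot))\big](x)\,ds$$
on a closed ball in $C_b((0,T];C_b^1(\R^d))$ equipped with the sup-in-time $C_b^1$ norm, which is a complete metric space. The two ingredients that drive everything are: (a) Theorem \ref{lem::lem1} with $k=\beta=1$, giving the crucial estimate $\|DP_t u_0\|_\infty \leq [u_0]_1$ that is uniform in $t$; and (b) Theorem \ref{lem::lem1} with $k=1,\beta=0$, giving $\|DP_t g\|_\infty \leq c_{1,0,d}\|g\|_\infty t^{-1/\alpha}$, where the singularity is integrable precisely because $\alpha>1$ by assumption \ref{NDa}.

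First I would choose the radius. Set $M := 2(\|u_0\|_\infty + [u_0]_1)+1$ and let $L_M$ be the local Lipschitz constant of $H$ in $p$ on $B(0,M)$ from \ref{assump:H2}; let $\mathcal{X}_T := \{\varphi\in C_b((0,T];C_b^1(\R^d)) : \sup_{t\in(0,T]}\|\varphi(t,\cdot)\|_{C_b^1}\leq M\}$. For $\varphi\in\mathcal{X}_T$, the integrand satisfies $\|H(s,\cdot,D\varphi(s,\cdot))\|_\infty \leq H_0 + L_M M =: K_M$, so combining $\|P_t u_0\|_\infty \leq \|u_0\|_\infty$, $\|DP_t u_0\|_\infty \leq [u_0]_1$, with
$$\Big\|\int_0^t P_{t-s}[H(s,\cdot,D\varphi)]\,ds\Big\|_\infty \leq K_M\, T, \qquad \Big\|D\int_0^t P_{t-s}[H(s,\cdot,D\varphi)]\,ds\Big\|_\infty \leq \tfrac{\alpha c_{1,0,d}}{\alpha-1}K_M\, T^{\frac{\alpha-1}{\alpha}},$$
we see that $\Gamma$ maps $\mathcal{X}_T$ into itself provided $T$ is chosen small enough that the two trailing terms add up to at most $\|u_0\|_\infty+[u_0]_1$.

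Next I would prove contraction. For $\varphi_1,\varphi_2\in \mathcal{X}_T$, \ref{assump:H2} gives $\|H(s,\cdot,D\varphi_1)-H(s,\cdot,D\varphi_2)\|_\infty \leq L_M\|D\varphi_1-D\varphi_2\|_\infty$, and applying the same two estimates as above to the difference $\Gamma[\varphi_1]-\Gamma[\varphi_2]$ (noting that the homogeneous term cancels) yields
$$\sup_{t\in(0,T]}\|\Gamma[\varphi_1](t,\cdot)-\Gamma[\varphi_2](t,\cdot)\|_{C_b^1} \;\leq\; L_M\Big(T+\tfrac{\alpha c_{1,0,d}}{\alpha-1}T^{\frac{\alpha-1}{\alpha}}\Big)\sup_{t\in(0,T]}\|\varphi_1(t,\cdot)-\varphi_2(t,\cdot)\|_{C_b^1}.$$
Shrinking $T$ further makes the constant strictly less than one, so the Banach fixed point theorem produces a unique fixed point $u\in\mathcal{X}_T$, which is a mild solution by construction.

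Two remaining technical points need verification. The first is that $\Gamma[\varphi]\in C_b((0,T];C_b^1(\R^d))$ whenever $\varphi\in \mathcal{X}_T$: the continuity of $t\mapsto P_t u_0$ in $C_b^1$ follows from the strong continuity of the semigroup on bounded Lipschitz functions, while the continuity of the Duhamel integral in $C_b^1$ follows from dominated convergence using the integrable singularities $(t-s)^{-1/\alpha}$ and Lemma \ref{lem::duhamel_map_time_translation}. The second is interchanging $D$ with the integral defining $\Gamma[\varphi]$, which is justified by the bound $\|Dp_{t-s}\|_{L^1}\leq c_0(t-s)^{-1/\alpha}$ from \ref{NDa} together with the dominated convergence theorem. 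The main obstacle is really just the bookkeeping of making $T$ small enough simultaneously for self-mapping and contraction after fixing the radius $M$; once the radius is chosen \emph{before} $T$, so that $L_M$ is a fixed constant, everything falls into place because $\alpha>1$ ensures the time-integrability $\int_0^T (t-s)^{-1/\alpha}\,ds\to 0$ as $T\to 0$.
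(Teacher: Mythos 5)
Your proposal is correct and follows essentially the same route as the paper: a Banach fixed point argument for the Duhamel map on a ball in $C_b((0,T];C_b^1(\R^d))$, with the radius fixed before $T$, using $\|Dp_t\|_{L^1}\leq c_0 t^{-1/\alpha}$ for both the self-map and contraction estimates (the paper's Lemma \ref{lem::duhamel_map_boundedness}). The only difference is cosmetic (a slightly different radius, and a more compressed treatment of the $C_b^1$-continuity in time, which the paper handles via the H\"older bound on $DS[u](t,\cdot)$ and the concentration of $p_\tau$ near the origin).
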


Note that a mild solution is a fixed point of the map $S$ defined by  \begin{align}\label{eq:Duhmap}
&S[\phi](x,t) = P_t[u_0](x)+\int_0^t P_{t-s}\big[ H(s,\cdot,D\phi(s,\cdot))
    \big](x)\, ds,\quad t\in (0,T],\ x\in \R^d.\\
\intertext{We will prove existence by an application of Banach fixed point theorem in the set\smallskip}
\label{eq:XAdef}
    &X_A  := \{\varphi \in C_b((0,T]; C_b^1(\R^d)) \ : \ \|\varphi\|_{X_A} \leq R_1:= \|u_0\|_{C_b^1}+1
    \}, \\
\intertext{where}
    &\|\varphi \|_{X_A} := \sup_{t \in (0,T]} \|\varphi(t,\cdot) \|_{C_b^1} = \sup_{t \in (0,T]} \big( \|\varphi(t, \cdot)\|_\infty + \|D\varphi(t, \cdot) \|_\infty \big).\nonumber
\end{align} 
The following result will be used to 
show that $S$ is a contractive self-map 
 on $X_A$.
\begin{lem}
\label{lem::duhamel_map_boundedness}
     Assume \ref{NDa}, 
     \ref{assump:H1}, \ref{assump:H2}, and $u,v\in X_A$. 
     \medskip

     \noindent (i) \ 
     For $t\in (0,T]$, $x \in \R^d$, and $k\in\{0,1\}$,
    \begin{align*}
         \qquad&|D^kS[u](t,x)|\leq \|D^ku_0 \|_\infty + (L_{R_1}R_1+H_0)  c_0^k \frac{T^{1-\frac k\alpha}}{1 - \frac k\alpha}.\\
    &|D^kS[u](t,x)-D^kS[v](t,x)| 
     \leq 
      L_{R_1}  \sup\limits_{t\in (0,T]} \|D^ku(t,\cdot)-D^kv(t,\cdot)\|_{\infty}   c_0^k \frac{T^{1-\frac k\alpha}}{1 - \frac k\alpha}.\\
     \intertext{\noindent (ii) \ For $t\in (0,T]$, $x \in \R^d$, and $\gamma\in(0,\alpha-1)$,}
     &[DS[u](t,\cdot)]_\gamma\leq\|Du_0\|_{\infty}\, 
      2^{1-\gamma}c_{2,1,d} \, t^{-\frac{\gamma}{\alpha}}+
      4 c_0
      (L_{R_1}R_1+H_0)\tfrac{\alpha}{\alpha-(1+\gamma)} t^{\frac{\alpha-(1+\gamma)}{\alpha}}. \\
     \intertext{\noindent (iii) \  
     The mapping $t \mapsto S[u](t, \cdot)$ is continuous in the $C_b^1$-norm, i.e. for $t\in(0,T]$,} 
     &\|S[u](t+\tau,\cdot) - S[u](t,\cdot)\|_{C_b^1} \xrightarrow{\tau \rightarrow 0} 0.
     \end{align*}
\end{lem}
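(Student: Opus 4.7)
The three parts all use the decomposition $S[u](t,\cdot)=P_tu_0+I_f(t,\cdot)$ with $I_f(t,x):=\int_0^tP_{t-s}[f(s,\cdot)](x)\,ds$ and $f(s,x):=H(s,x,Du(s,x))$. Since $u\in X_A$ forces $\|Du(s,\cdot)\|_\infty\le R_1$, assumption~\ref{assump:H2} gives the two pointwise bounds
\[
\|f(s,\cdot)\|_\infty\le H_0+L_{R_1}R_1,\qquad |f_u(s,x)-f_v(s,x)|\le L_{R_1}|Du(s,x)-Dv(s,x)|,
\]
for any $u,v\in X_A$. Each of the three estimates is then obtained by applying Theorem~\ref{lem::lem1} separately to $P_tu_0$ and to $P_{t-s}[f(s,\cdot)]$ and integrating in $s$.

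For (i), the $C_b^1$-regularity of $u_0$ lets us push the derivative onto $u_0$ in the convolution $P_tu_0$, giving $\|D^kP_tu_0\|_\infty\le\|D^ku_0\|_\infty$ for $k\in\{0,1\}$. For the Duhamel part, Theorem~\ref{lem::lem1}(i) with $\beta=0$ yields $\|D^kP_{t-s}[f(s,\cdot)]\|_\infty\le c_0^k(t-s)^{-k/\alpha}\|f(s,\cdot)\|_\infty$, and time integration produces the factor $\tfrac{T^{1-k/\alpha}}{1-k/\alpha}$. The difference bound is the same calculation, with $f$ replaced by $H(\cdot,\cdot,Du)-H(\cdot,\cdot,Dv)$ and the $P_tu_0$ terms cancelling.

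For (ii) I decompose $[DS[u](t,\cdot)]_\gamma$ into $[DP_tu_0]_\gamma$, which is handled directly by Theorem~\ref{lem::lem1}(ii) with $k=1$, $\beta=1$ (noting $[u_0]_1=\|Du_0\|_\infty$) to give the first term of the claim, and $[DI_f(t,\cdot)]_\gamma$, which is the main technical step. I would mimic the diagonal splitting used in Theorem~\ref{prop:f_bound}(ii): fix $x,x'\in\R^d$ and set $t_0:=|x-x'|^\alpha$. When $t_0\ge t$, bound $|DI_f(t,x)-DI_f(t,x')|\le 2\int_0^t\|DP_{t-s}[f]\|_\infty\,ds$ via Theorem~\ref{lem::lem1}(i) and use $|x-x'|^\gamma\ge t^{\gamma/\alpha}$ to absorb the time power. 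When $t_0<t$, split $\int_0^t=\int_0^{t-t_0}+\int_{t-t_0}^t$: the tail is again handled by the $L^\infty$-estimate $\|DP_s[f]\|_\infty\le c_0 s^{-1/\alpha}\|f\|_\infty$ (producing $t_0^{(\alpha-1)/\alpha}\le t^{(\alpha-1)/\alpha}$ after dividing by $|x-x'|^\gamma$), while the bulk integrates the H\"older seminorm bound $[DP_{t-s}[f]]_\gamma\le C(t-s)^{-(1+\gamma)/\alpha}\|f\|_\infty$ from Theorem~\ref{lem::lem1}(ii). The time integral $\int_{t_0}^t r^{-(1+\gamma)/\alpha}\,dr$ is convergent precisely because $\gamma<\alpha-1$, delivering the prefactor $\tfrac{\alpha}{\alpha-(1+\gamma)}$. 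The main obstacle is consolidating the two constants arising from the two cases into the clean $4c_0$ form of the statement; this is the most delicate bookkeeping in the lemma.

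For (iii), apply Lemma~\ref{lem::duhamel_map_time_translation} with $t_0=t$ and $\tau>0$ to obtain
\[
S[u](t+\tau,\cdot)-S[u](t,\cdot)=(P_\tau-I)\bigl[S[u](t,\cdot)\bigr]+\int_t^{t+\tau}P_{t+\tau-s}[f(s,\cdot)]\,ds.
\]
The tail integral vanishes in both $\sup$-norm (as $O(\tau)$) and gradient norm (as $O(\tau^{(\alpha-1)/\alpha})$) by the estimates from (i). By (i) and (ii), $S[u](t,\cdot)$ is bounded and Lipschitz, and $DS[u](t,\cdot)$ is bounded and $\gamma$-H\"older; both are therefore in $BUC(\R^d)$. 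Strong continuity of the heat semigroup on $BUC(\R^d)$, a standard consequence of the weak convergence $p_\tau\to\delta_0$, then yields $\|(P_\tau-I)S[u](t,\cdot)\|_\infty\to0$ and $\|(P_\tau-I)DS[u](t,\cdot)\|_\infty\to0$. For $\tau<0$ the symmetric argument applies after using the translation lemma to write $S[u](t,\cdot)=P_{|\tau|}S[u](t-|\tau|,\cdot)+\int_{t-|\tau|}^tP_{t-s}[f]\,ds$ and then unfolding the Duhamel representation of $S[u](t-|\tau|,\cdot)$ so that $P_{|\tau|}-I$ acts on $|\tau|$-independent reference functions $P_{t-|\tau|}u_0$ and $P_{t-|\tau|-s}f(s,\cdot)$, closing the argument by dominated convergence in $s$.
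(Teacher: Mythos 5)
Your proof is correct, and parts (i) and (iii) essentially coincide with the paper's argument: the same Duhamel decomposition, the same use of Lemma~\ref{lem::duhamel_map_time_translation}, and the same mechanism (vague convergence $p_\tau\to\delta_0$ combined with the uniform continuity of $D^kS[u](t,\cdot)$ supplied by (i)--(ii)) for the term $(P_\tau-I)D^kS[u](t,\cdot)$; your extra care with $\tau<0$, unfolding the Duhamel formula so that $P_{|\tau|}-I$ acts on $\tau$-independent reference functions, is a legitimate way of making precise what the paper dismisses as ``similar''.

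The genuine difference is in (ii). The paper does not use the diagonal splitting $t_0=|x-x'|^\alpha$ there at all: it bounds the $\gamma$-seminorm of the integrand $g_{t-s}=DP_{t-s}\big[H(s,\cdot,Du(s,\cdot))\big]$ directly by H\"older interpolation,
\begin{align*}
[g_{t-s}]_\gamma\;\le\; 2^{1-\gamma}\|g_{t-s}\|_\infty^{1-\gamma}\|Dg_{t-s}\|_\infty^{\gamma}\;\lesssim\;(L_{R_1}R_1+H_0)\,(t-s)^{-\frac{1+\gamma}{\alpha}},
\end{align*}
and then simply integrates in $s$; this works precisely because $\gamma<\alpha-1$ makes the exponent integrable, and it is what produces the stated factor $\tfrac{\alpha}{\alpha-(1+\gamma)}$. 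Your splitting is the heavier tool that the paper reserves for the critical exponent in Theorem~\ref{prop:f_bound}(ii), where the direct time integral diverges. Deploying it here is correct and even buys you the endpoint $\gamma=\alpha-1$ (consistent with the remark following the lemma), at the cost of a longer argument and of constants that do not obviously consolidate to the stated $4c_0$. Since only the finiteness of $[DS[u](t,\cdot)]_\gamma$ and the powers of $t$ are used downstream (in part (iii) and in the fixed-point argument), that bookkeeping discrepancy is harmless.
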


Now we can prove existence.
\begin{proof}[Proof of Theorem \ref{thm::hjb_sol_existence}]
    If 
     \begin{align}
     T \leq T_0:=\bigg(\frac{\alpha-1}{2\alpha c_0(L_{R_1}R_1+H_0)}\bigg)^{\frac{\alpha}{\alpha-1}} \ \wedge \ \frac{1}{2(L_{R_1}R_1+H_0)} ,\label{eq:intervallength}
    \end{align}
    then by Lemma \ref{lem::duhamel_map_boundedness} (i), there is $C<1$ such that
    $$\|S[u]\|_{X_A} \leq R_1 \qquad\text{and}\qquad \|S[u]-S[v]\|_{X_A} \leq C\|u-v\|_{X_A}.$$
By Lemma~\ref{lem::duhamel_map_boundedness} (ii) and (iii) we therefore get that $S[u]\in X_A$. The existence of a fixed point of $S$ in $X_A$ then follows by Banach's fixed point theorem.
%
\end{proof}

It remains to prove Lemma \ref{lem::duhamel_map_boundedness}.

\begin{proof}[Proof of Lemma \ref{lem::duhamel_map_boundedness}]
Note that by \ref{assump:H2},   
$|H(t,x,Du)| 
\leq L_{R_1}R_1+H_0$ for $u \in X_A$. 
\smallskip

\noindent $(i)$ 
By the definition of $S$, Young's inequality for convolutions, \ref{assump:H2}, $\|p_t\|_{L^1}=1$, and \ref{NDa},
\begin{align*}
    |D^kS[u](t,x)| & \leq |D^kP_t[u_0](x)|+ \int_0^t |D^kp_{t-s}* H( s,\cdot, Du(s,\cdot))(x)|\, ds \\
    &\leq \|D^ku_0 \|_\infty + \int_0^t \|D^kp_{t-s}\|_{L^1} \|H(s,\cdot, Du(s,\cdot))\|_{\infty}  \, ds \\
    &\leq\|D^ku_0 \|_\infty +(L_{R_1}R_1+H_0)\cdot \begin{cases}
        T, & k=0, \\
        c_0\frac{T^{1-\frac{1}{\alpha}}}{1-\frac{1}{\alpha}},& k=1.
    \end{cases}
\end{align*}
Similar computations for the differences conclude the proof of $(i)$: 
\begin{align*}
    \|D^k(S[u]-S[v])(t,\cdot)\|_{\infty} &\leq \int_0^t \sup_{x\in \R^d} \big| D^kp_{t-s}*\big(H(s,\cdot,Du)-H(s,\cdot,Dv)\big)(x) \big| \, ds \\
    &\leq  L_{R_1}\int_0^t \| Du(s,\cdot)-Dv(s,\cdot)\|_{\infty} c_0^k (t-s)^{-\frac{k}{\alpha}} \, ds.
\end{align*}

\noindent $(ii)$ 
Note that
    \begin{align*}
    [DS[u](t,\cdot)]_\gamma 
     \leq [ DP_t u_0 ]_{\gamma} 
     + \int_0^t [g_{t-s}]_\gamma \, ds 
     \end{align*}
     for
      $g_{t-s}(x):=\int_{\R^d} Dp_{t-s}(x-y) H(s,y,Du(s,y)) \, dy.$
By interpolation (Lemma \ref{thm:holder_interpolation}), \ref{assump:H2}, and \ref{NDa},
\begin{align*}
    [g_{t-s}]_{\gamma} & \leq 2^{1-\gamma} \|g_{t-s} \|_\infty^{1-\gamma} \|Dg_{t-s} \|_\infty^{\gamma} \leq  2^{1-\gamma} \|H(\cdot,\cdot,Du)\|_{\infty} \|Dp_{t-s}\|_{L^1}^{1-\gamma}\|D^2p_{t-s}\|_{L^1}^\gamma\\
    &\leq 2^{1-\gamma} (L_{R_1}R_1+H_0)
    c_0^{1+\gamma}2^{\frac {2\gamma}\alpha} (t-s)^{-\frac{1+\gamma}{\alpha}}.
\end{align*}
By Theorem \ref{lem::lem1} we have that $[ DP_t u_0 ]_{\gamma} \leq 2^{1-\gamma}c_{2,1,d}\|Du_0 \|_\infty t^{-\frac{\gamma}{\alpha}}$.
\smallskip

\noindent $(iii)$ We show that $(0,T]\ni t \mapsto S[u](t, \cdot)$ is continuous in the $C_b^1$-norm. 
Let 
$\tau>0$ and $t+\tau \in(0,T]$ and note that by Lemma~\ref{lem::duhamel_map_time_translation}, assumptions \ref{assump:H2} and \ref{NDa}, and the fact that $\|p_t\|_{L^1}=1$,
\begin{align*}
    &\big| D^kS[u](t+\tau, x) - D^kS[u](t, x) \big| \\
    & \leq \underbrace{\big| p_\tau * D^kS[u](t,\cdot)(x) -  D^kS[u](t, x)\big|}_{A_k} + \int_t^{t+\tau}| D^kp_{t+\tau-s}* H(s,\cdot,Du)(x)| \, ds \\
    &\leq {A_k} + \big( L_{R_1}R_1 + H_0 \big)\cdot 
    \begin{cases}
        \tau, &\quad k=0, \\
         c_0 \frac{\tau^{1-\frac{1}{\alpha}}}{1-\frac{1}{\alpha}}, &\quad k=1.
    \end{cases} 
\end{align*}
To estimate $A_k$, fix $t\in(0,T)$, and let $\eta_0 = 1$ and $\eta_1 =\gamma$ such that $1<1+\gamma<\alpha$. Given $\epsilon>0$, pick $\delta>0$, and $\tau_0>0$ small enough, 
 such that
\begin{align*}
    &\delta^{\eta_k} < \frac{\epsilon}{2[D^kS[u](t,\cdot)]_{\eta_k}} \quad \text{and} \quad \int_{\R^d \setminus B_\delta(0)}p_\tau(y) \, dy < \frac{\epsilon}{4 \|D^k S[u]\|_\infty}, \quad 0<\tau<\tau_0,\ k=0,1
\end{align*}
hold, which is possible since $p_\tau\stackrel{\ast}{\rightharpoonup} \delta_0$ in $C_b(\R^d)$ by \cite[(3.2)]{pruitt_growth_random_walks} and part $(ii)$ above. Then,
\begin{equation} \label{eq:A_k_calculation}
\begin{aligned}
    A_k 
    &=  \Big| \int_{\R^d}p_\tau(y) \big[D^kS[u](t,x-y) -  D^kS[u](t, x)\big] \, dy \Big| \\
    & \leq \int_{B_\delta(0)}p_\tau(y) [D^kS[u](t,\cdot)]_{\eta_k}|y|^{\eta_k} \, dy + 2 \|D^kS[u] \|_\infty\int_{\R^d \setminus B_\delta(0)}p_\tau(y) \, dy \\
    & < [D^kS[u](t,\cdot)]_{\eta_k} \delta^{\eta_k} + \frac{\epsilon}{2} < \epsilon,
\end{aligned}
\end{equation}
so since $\epsilon>0$ was arbitrary, $A_k \xrightarrow{\tau \rightarrow 0} 0$ uniformly in $x$ for each fixed $t$. As a result, 
\begin{align*}
    \|S[u](t+\tau,\cdot) - S[u](t,\cdot)\|_{C_b^1} \xrightarrow{\tau \rightarrow 0} 0,
\end{align*}
for each $t\in(0,T)$. The proof for $t=T$ is similar, with $t-\tau$ instead of $t+\tau$. 
\end{proof}
\begin{rem}
    (a) When $\alpha\in (1,2)$, the claim in Lemma \ref{lem::duhamel_map_boundedness} $(ii)$ holds also for $\gamma=\alpha-1$. This follows from using Theorem~\ref{prop:f_bound} instead of interpolation on the integral term.

    \medskip
    \noindent (b) If $u_0\in C^{1+\epsilon}_b(\R^d)$ for $0<\epsilon <\alpha-1$, then  $[ DP_t u_0 ]_{\gamma}=[ P_t Du_0 ]_{\gamma} \leq [Du_0 ]_\gamma $, and Lemma \ref{lem::duhamel_map_boundedness} $(ii)$ holds for $\gamma \in (0,\epsilon)$ without blow-up in time. Then $(iii)$ holds on $[0,T]$, that is $t=0$ can be included.    
\end{rem}

\subsection{Short-time existence of solutions with gradient blow-up}\ \smallskip

\noindent We consider cases where the solution $u$ satisfies $\|Du(t,\cdot)\|_{\infty} = O(t^{-\gamma})\text{ as } t \rightarrow 0$ for some $\gamma>0$, using the following assumptions:

\medskip
\begin{enumerate}
\myitem{$\mathbf{(H_p')}$} \label{assump:B2_new}
$H: (0,T]\times \R^d \times \R^d\to \R$ is continuous, and there are $r\geq1$, 
$H_0,L>0$, such that for $t\in(0,T)$ and $x,p,q \in \R^d$, 
    $$|H(t,x,0)|\leq H_0 \qquad\text{and}\qquad|H(t,x,p)-H(t,x,q)| \leq L(1+|p|^2+|q|^2)^{\frac {r-1}2}|p-q|.$$
\smallskip
    
    \myitem{$\mathbf{(U_0')}$}\label{assump:B1}
$u_0\in C_b^\delta(\R^d)$ \quad for some $\delta\in[0,1)$ satisfying \quad 
    $\begin{cases}
        \delta = 0,\quad & r \in [1,\alpha),\\[0.1cm]
        \delta\in (0,1),\quad &r=\alpha,\\[0.1cm]
        \delta \in \big(\frac{r-\alpha}{r-1},1\big),\quad &r\in(\alpha,\infty).
    \end{cases}$
\medskip
    \end{enumerate}
\begin{rem}
    (a) \,By \ref{assump:B2_new}, $|H(t,x,p)|\leq L(1+|p|^2)^{\frac r2} + H_0$ and $r$ is the growth rate of $H$ in $p$. An example is $H(t,x,p)=b(t,x)|p|^r+f(t,x)$ where $b,f\in C_b(\R^d)$. \smallskip
    
    \noindent (b) \,Note the interplay between $\delta$ and $r$ in \ref{assump:B2_new} and \ref{assump:B1}; higher $r$ requires higher $\delta$.  Moreover, for any $r\geq 1$, there are $\delta\in[0,1)$ such that \ref{assump:B1} and \ref{assump:B2_new} hold, and vice versa for any $\delta$ there are $r$'s.
\end{rem}

We now show the short-time existence of mild solutions.
\begin{thm}[Short time existence (II)]
\label{thm::case_b_hjb_sol_existence}
    Assume  \ref{NDa}, 
    \ref{assump:B2_new}, and \ref{assump:B1}. Then there is $T>0$ for which there exists a  unique mild solution $u\in C((0,T]; C_b^1(\R^d))$ of \eqref{eq:hjb}, and 
    $\|Du(t,\cdot)\|_\infty = O(t^{-(1-\delta)/\alpha})$ as $t\to 0$.
\end{thm}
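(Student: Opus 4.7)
The plan is to prove short-time existence by a Banach fixed point argument applied to the Duhamel map $S$ from \eqref{eq:Duhmap}, now on a time-weighted subset of $C((0,T]; C_b^1(\R^d))$ built to accommodate the expected gradient blow-up $\|Du(t,\cdot)\|_\infty = O(t^{-(1-\delta)/\alpha})$. Concretely, for a radius $R$ to be chosen, I would work in
\[
Y_R := \Big\{\varphi \in C((0,T]; C_b^1(\R^d)) : \sup_{t\in(0,T]}\|\varphi(t,\cdot)\|_\infty + \sup_{t\in(0,T]} t^{(1-\delta)/\alpha}\|D\varphi(t,\cdot)\|_\infty \leq R\Big\},
\]
which is a closed subset of a Banach space under the natural weighted norm. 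The overall strategy mirrors Theorem~\ref{thm::hjb_sol_existence}, with Theorem~\ref{prop:f_bound} replacing the elementary interpolation used there.

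First I would verify the self-map property. For $\varphi\in Y_R$ and $f(s,x):=H(s,x,D\varphi(s,x))$, assumption \ref{assump:B2_new} combined with the weighted bound on $D\varphi$ yields $\|f(s,\cdot)\|_\infty \leq C(1+R^r)\, s^{-r(1-\delta)/\alpha}$, so $\|s^\gamma f(s,\cdot)\|_\infty$ is bounded with $\gamma := r(1-\delta)/\alpha$. The restrictions on $\delta$ in \ref{assump:B1} ensure $\gamma<1$, so Theorem~\ref{prop:f_bound}(i) with $k=1$, $\beta=0$ applies and gives
\[
\Big\|D\int_0^t P_{t-s}[f(s,\cdot)]\,ds\Big\|_\infty \leq C(1+R^r)\, t^{-(1-\delta)/\alpha}\, t^{[\alpha-1-(r-1)(1-\delta)]/\alpha}.
\]
Combined with $\|DP_tu_0\|_\infty \leq c_{1,\delta,d}[u_0]_\delta\, t^{-(1-\delta)/\alpha}$ from Theorem~\ref{lem::lem1}, and an analogous (easier) $k=0$ computation for $\|S[\varphi]\|_\infty$, one can choose $R$ large and then $T>0$ small so that $S(Y_R)\subset Y_R$.

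Next, for the contraction, using \ref{assump:B2_new}, \ref{NDa}, and Young's inequality,
\[
\|D(S[\varphi]-S[\psi])(t,\cdot)\|_\infty \leq C\int_0^t (t-s)^{-1/\alpha} (1+\|D\varphi(s)\|_\infty^2+\|D\psi(s)\|_\infty^2)^{(r-1)/2} \|D\varphi(s)-D\psi(s)\|_\infty\,ds,
\]
and bounding the gradients via $\varphi,\psi\in Y_R$ produces $C_R\,\|\varphi-\psi\|_{Y_R}\int_0^t(t-s)^{-1/\alpha}\, s^{-r(1-\delta)/\alpha}\,ds = C_R\,\|\varphi-\psi\|_{Y_R}\, t^{-(1-\delta)/\alpha}\, T^\varepsilon$ (after evaluating a beta integral) with the same $\varepsilon>0$ as above, so shrinking $T$ further makes $S$ a strict contraction on $Y_R$ in the weighted norm. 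Banach's fixed point theorem then delivers a unique mild solution in $Y_R$, and the claimed blow-up rate reads off from membership in $Y_R$. Continuity in time on $(0,T]$ in the $C_b^1$ topology is proved exactly as in Lemma~\ref{lem::duhamel_map_boundedness}(iii) via Lemma~\ref{lem::duhamel_map_time_translation} and the weak convergence $p_\tau \to \delta_0$; since continuity is only required on $(0,T]$, the blow-up at the origin is harmless.

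The main obstacle is the exponent bookkeeping in the self-map step: assumption \ref{assump:B1} must be read as precisely the condition that both keeps $\gamma<1$ (so Theorem~\ref{prop:f_bound} applies at $s=0$) and leaves a strictly positive power of $t$ after scaling out the expected rate $t^{-(1-\delta)/\alpha}$. Any weaker lower bound on $\delta$ would either make $\int_0^t(t-s)^{-1/\alpha} s^{-r(1-\delta)/\alpha}\,ds$ diverge at $0$ or remove the small factor $T^\varepsilon$ that is needed to close both the self-map and contraction estimates simultaneously.
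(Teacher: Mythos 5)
Your proposal is correct and follows essentially the same route as the paper: a Banach fixed point argument for the Duhamel map in the time-weighted space with weight $t^{(1-\delta)/\alpha}$, with the self-map and contraction estimates closed by exactly the exponent conditions encoded in \ref{assump:B1}. The only cosmetic difference is that you invoke Theorem~\ref{prop:f_bound}(i) (whose stated hypothesis $\alpha+\beta\notin\{2,3\}$ formally excludes $\alpha=2$, $\beta=0$, though the $k=1$, $\beta=0$ bound you need is just the beta-integral computation the paper carries out directly in Lemma~\ref{lem::case_b_duhamel_map_boundedness}), so nothing of substance changes.
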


To prove this result, we use a fixed point argument on the space $X_B$, defined for $\gamma\in (0,1)$ as
\begin{align*}
    X_B &:= \{\phi\in C((0,T]; C_b^1(\R^d)): \|\phi\|_{X_B} \leq R_2\},\ \text{where}\\[0.1cm] 
\|\varphi \|_{X_B} &:= \sup_{t \in (0,T]} \Big( \|\varphi(t, \cdot)\|_\infty + t^{\gamma}\|D\varphi(t, \cdot) \|_\infty \Big).
\end{align*}
The constant $R_2$ can be made explicit by inspecting the computations below.

\begin{lem}\label{lem::case_b_duhamel_map_boundedness}
    Assume \ref{NDa}, 
    \ref{assump:B2_new}, \ref{assump:B1}, $u,v\in X_B$, and $S[u]$ is defined in \eqref{eq:Duhmap}. If 
    \begin{align*}
    (i) \quad \gamma r < 1, \qquad (ii) \quad \frac{1-\delta}\alpha\leq \gamma, \qquad \text{and}\qquad (iii) \quad \frac1\alpha+\gamma(r-1)<1,
    \end{align*}
    then there are constants $C_1,C_2>0$ such that
\begin{align*}
\tag{a} &\|S[u]\|_{X_B} \leq  \|u_0\|_\infty+ c_{1,\delta,d}[u_0]_\delta T^{\gamma-\frac{1-\delta}\alpha}\\
&\hspace{3.5cm}+
C_1 \Big(T+T^{\gamma+1-\frac1\alpha}+ L(T^{2\gamma}+R_2^2)^{\frac{r}{2}} \big( T^{1-\gamma r}+T^{1-\frac 1\alpha -\gamma (r-1)} \big)\Big),\hspace{1cm}
\\[0.1cm]
\tag{b}&\|S[u] - S[v]\|_{X_B}  \leq \ C_2\|u - v\|_{X_B}(T^{2\gamma}+2R_2^2)^{\frac{r-1}{2}}
          \big(T^{1-\gamma r}+T^{1-\frac{1}{\alpha}-\gamma(r-1)} \big).
\end{align*}     
\end{lem}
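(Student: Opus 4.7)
The plan is to estimate $\|S[u](t,\cdot)\|_{\infty}$ and $t^{\gamma}\|DS[u](t,\cdot)\|_{\infty}$ separately by applying Theorem \ref{lem::lem1} to each piece of the Duhamel formula, and then take the supremum over $t\in(0,T]$. The key preliminary observation is that for $u\in X_{B}$ one has $\|Du(s,\cdot)\|_{\infty}\leq R_{2}s^{-\gamma}$, so by \ref{assump:B2_new},
$$|H(s,x,Du(s,x))|\leq H_{0}+L(1+|Du(s,x)|^{2})^{r/2}\leq H_{0}+L s^{-\gamma r}(T^{2\gamma}+R_{2}^{2})^{r/2}.$$
This is precisely where condition $(i)$, $\gamma r<1$, enters, to ensure that $s^{-\gamma r}$ is integrable near $0$.

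For bound $(a)$, the $L^{\infty}$ part uses $\|p_{t-s}\|_{L^{1}}=1$: integrating the pointwise bound above in $s$ produces $\|u_{0}\|_{\infty}+H_{0}T+CL(T^{2\gamma}+R_{2}^{2})^{r/2}T^{1-\gamma r}$. For the gradient part, split $DS[u]=DP_{t}[u_{0}]+\int_{0}^{t}DP_{t-s}[H(s,\cdot,Du(s,\cdot))]\,ds$. Theorem \ref{lem::lem1}$(i)$ with $k=1$, $\beta=\delta$ gives $\|DP_{t}[u_{0}]\|_{\infty}\leq c_{1,\delta,d}[u_{0}]_{\delta}t^{-(1-\delta)/\alpha}$, so after multiplication by $t^{\gamma}$ and using $(ii)$ this is bounded by $c_{1,\delta,d}[u_{0}]_{\delta}T^{\gamma-(1-\delta)/\alpha}$. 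For the Duhamel integral I apply Theorem \ref{lem::lem1}$(i)$ with $k=1$, $\beta=0$ to move $D$ past the convolution, then combine with the pointwise bound on $H$:
$$t^{\gamma}\Big\|\int_{0}^{t}DP_{t-s}[H(s,\cdot,Du(s,\cdot))]\,ds\Big\|_{\infty}\leq C\,t^{\gamma}\!\int_{0}^{t}(t-s)^{-1/\alpha}\bigl(H_{0}+L s^{-\gamma r}(T^{2\gamma}+R_{2}^{2})^{r/2}\bigr)\,ds.$$
Evaluating the two resulting (Beta-type) integrals produces the terms $T^{\gamma+1-1/\alpha}$ and $T^{1-1/\alpha-\gamma(r-1)}$; their exponents are non-negative, respectively because $\alpha>1$ and by condition $(iii)$. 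Summing all contributions yields $(a)$.

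For bound $(b)$, the initial data cancels in $S[u]-S[v]$. The local-Lipschitz part of \ref{assump:B2_new} combined with $\|Du(s,\cdot)\|_{\infty},\|Dv(s,\cdot)\|_{\infty}\leq R_{2}s^{-\gamma}$ and $\|Du(s,\cdot)-Dv(s,\cdot)\|_{\infty}\leq s^{-\gamma}\|u-v\|_{X_{B}}$ gives
$$\|H(s,\cdot,Du(s,\cdot))-H(s,\cdot,Dv(s,\cdot))\|_{\infty}\leq L(T^{2\gamma}+2R_{2}^{2})^{(r-1)/2}s^{-\gamma r}\|u-v\|_{X_{B}}.$$
Inserting this into the same two integrations as above -- against $\|p_{t-s}\|_{L^{1}}=1$ for the $L^{\infty}$ piece, and against $\|Dp_{t-s}\|_{L^{1}}\leq c_{0}(t-s)^{-1/\alpha}$ for the gradient piece -- reproduces exactly the two factors $T^{1-\gamma r}$ and $T^{1-1/\alpha-\gamma(r-1)}$ appearing in $(b)$.

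The proof is a bookkeeping exercise rather than a deep argument. The only mildly delicate point is verifying that the three conditions $(i)$, $(ii)$, $(iii)$ are precisely those needed to make the three non-trivial exponents $1-\gamma r$, $\gamma-(1-\delta)/\alpha$ and $1-1/\alpha-\gamma(r-1)$ non-negative, so that every $T$-dependent factor on the right-hand sides of $(a)$ and $(b)$ is small when $T$ is small -- the property the subsequent Banach fixed-point argument will rely on.
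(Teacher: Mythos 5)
Your proposal is correct and follows essentially the same route as the paper: split the Duhamel formula, bound $H(s,\cdot,Du(s,\cdot))$ via \ref{assump:B2_new} and $\|Du(s,\cdot)\|_\infty\leq R_2 s^{-\gamma}$, apply $\|p_{t-s}\|_{L^1}=1$ and $\|Dp_{t-s}\|_{L^1}\leq c_0(t-s)^{-1/\alpha}$ (equivalently Theorem~\ref{lem::lem1}), and evaluate the resulting Beta-type integrals, with conditions (i)--(iii) guaranteeing integrability and nonnegative powers of $T$. No gaps.
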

\begin{proof}
\noindent (a) \ Let $k=0,1$. By the definition of $S$ we have
\begin{align*}
    |D^kS[u](t,x)| & \leq |D^kP_t[u_0](x)|+ \int_0^t |D^kp_{t-s}*H(s,\cdot,Du(s,\cdot))(x)|\, ds.
\end{align*}
By \ref{assump:B2_new}, \ref{NDa}, and since $\|p_t\|_{L^1}=1$,
\begin{align*}
    &\int_0^t |D^kp_{t-s}*H(s,\cdot,Du(s,\cdot))(x)| \, ds \\
    & \leq H_0 c_0^k\frac{t^{1-\frac{k}{\alpha}}}{1-\frac{k}{\alpha}} +  Lc_0^k\Big(t^{2\gamma}+ \sup_{t\in(0,T]}\|t^{\gamma}Du(t,\cdot)\|_\infty^2\Big)^{\frac{r}{2}}\int_0^t (t-s)^{-\frac{k}{\alpha}}s^{-\gamma r} \, ds \\
    &\leq H_0\cdot\begin{cases}
     t, & \ k=0,\\[0.2cm]
     c_0\frac{t^{1-\frac{1}{\alpha}}}{1-\frac{1}{\alpha}}, & \ k=1,
    \end{cases} \quad + \quad L\Big(T^{2\gamma}+R_2^2\Big)^{\frac{r}{2}}\cdot \begin{cases}
        \frac{t^{1-\gamma r}}{1-\gamma r}, & \ k=0,\\[0.2cm]
        t^{1-\frac{1}{\alpha}-\gamma r}c_0 
        \int_0^1 (1-z)^{-\frac{1}{\alpha}}z^{-\gamma r } \, dz 
        %
        , & \ k=1.
    \end{cases}
\end{align*}
By \ref{assump:B1}, \ref{NDa}, $\|p_t\|_{L^1}=1$, and Theorem \ref{lem::lem1}, we have
\begin{align*}
    |P_t[u_0](x)|\leq\|u_0\|_\infty \qquad \text{and} \qquad \|DP_t[u_0]\|_\infty \leq c_{1,\delta, d} 
    [u_0]_\delta  t^{-\frac{1-\delta}{\alpha}}.
\end{align*}
 The result now follows by combining the estimates to compute $\|S[u](t,\cdot)\|_\infty+t^\gamma\|DS[u](t,\cdot)\|_\infty$, and noting that by the assumptions on $\alpha,\delta,\gamma,r$, the integral is finite and all the resulting powers of $t$ are nonnegative.
\medskip
   
\noindent (b) \ Let $k=0,1$. By \ref{assump:B2_new} and the definition of $S$,
    \begin{align*}
        & |D^k(S[u]-S[v])(t,x)| \\ &\leq \int_0^t \|D^kp_{t-s} \|_{L^1} \| H(s, \cdot, Du(s,\cdot))-H(s, \cdot, Dv(s,\cdot)) \|_\infty \; ds \\
        & \leq c_0^k L \int_0^t (t-s)^{-\frac{k}{\alpha}} \big(1+\| Du(s,\cdot)\|_\infty^{2}+\| Dv(s,\cdot)\|_\infty^{2} \big)^{\frac{r-1}{2}}\|Du(s,\cdot)-Dv(s,\cdot) \|_\infty \; ds \\
        &\leq c_0^k L \|u-v\|_{X_B} \sup_{t\in(0,T]}\big(t^{2\gamma}+\| t^{\gamma}Du(t,\cdot)\|_\infty^{ 2}+\|t^\gamma Dv(t,\cdot)\|_\infty^{ 2} \big)^{\frac{r-1}{2}} \int_0^t (t-s)^{-\frac{k}{\alpha}}s^{-\gamma r } \, ds \\
        &\leq L \|u-v\|_{X_B} \big(T^{2\gamma}+2R_2^2 \big)^{{\frac{r-1}{2}}} \cdot\begin{cases}
            \frac{t^{1-\gamma r}}{1-\gamma r}, &\quad k=0, \\[0.2cm]
            t^{1-\frac{1}{\alpha}-\gamma r}c_0 \int_0^1 (1-z)^{-\frac{1}{\alpha}}z^{-\gamma r} \; dz, &\quad k=1,
        \end{cases}
        \end{align*}
    where we have done the change of variables $z=\frac{s}{t}$ in the final line for $k=1$.  The result now follows by combining the estimates to compute $\|(S[u]-S[v])(t,\cdot)\|_\infty+t^\gamma\|D(S[u]-S[v])(t,\cdot)\|_\infty$, and noting that by the assumptions on $\alpha,\gamma,r$, the integral is finite and all the resulting powers of $t$ are nonnegative.
\end{proof}
It follows that $S$ is a contraction on $X_B$ for small $T$, if assumptions {\em(i) -- (iii)} in Lemma \ref{lem::case_b_duhamel_map_boundedness} hold:
\begin{align*}
(i) \quad \gamma < \frac{1}{r}, \qquad (ii) \quad  \frac{1-\delta}{\alpha}\leq \gamma, \qquad (iii)\quad \gamma < \frac{1-\frac{1}{\alpha}}{r-1}.
\end{align*}
The powers of $T$ in Lemma \ref{lem::case_b_duhamel_map_boundedness} (b) are then positive and the $T$-factor can be made small by taking $T$ small. 
The above conditions can be reformulated as \ref{assump:B1}. Indeed, note that 
\begin{align*}
    \frac{1}{r} \leq \frac{1-\frac{1}{\alpha}}{r-1}\quad  \iff \quad r\leq \alpha.
\end{align*}
Elementary calculations show that if
\ref{assump:B1} holds, then $(i)$, $(ii)$, and $(iii)$ are satisfied with $\gamma = (1-\delta)/\alpha$.
\begin{proof}[Proof of Theorem~\ref{thm::case_b_hjb_sol_existence}]
    We use Banach's fixed point theorem in $X_B$ with $R_2=\|u_0\|_\infty+ c_{1,\delta,d}[u_0]_\delta+1$. For small enough $T$, $\|S[u]\|_{X_B}\leq R_2$ for all $u\in X_B$ by Lemma~\ref{lem::case_b_duhamel_map_boundedness} (a). Moreover, $S[u](t,\cdot)\in C^1_b(\R^d)$ and is continuous in the  $C^1_b(\R^d)$-norm by Lemma~
    \ref{lem::duhamel_map_boundedness} $(ii)$ and $(iii)$ and Lemma~\ref{lem::duhamel_map_time_translation}, since functions in 
    $X_B$ have uniformly bounded spatial gradients at $t\in (\varepsilon,T]$ for any $\varepsilon>0$. It follows that $S$ maps $X_B$ into $X_B$. Contractivity for small $T$ follows from Lemma \ref{lem::case_b_duhamel_map_boundedness} (b).
\end{proof}

\section{Optimal spatial Schauder-regularity for the viscous HJ equation}\label{sec::full_schauder_regularity}
Assuming mild solutions exist on some time interval $[0,T]$, in this section we show that they attain optimal $C^{\alpha+\beta}_b$ H\"older regularity in $x$,
a gain of order $\alpha$ (= the order of $\mL$) over the $C^\beta_b$ regularity in $x$ of $H(t,x,p)$ (see \ref{assump:H} and \ref{assump:B3} below).
When $\mL$ is a local second order elliptic operator, this corresponds to classical Schauder regularity theory \cite{MR1465184}. We are also interested in the blow-up rates as $t\to0$ of the optimal H\"older norms, and therefore we consider separately the cases where the spatial gradients of solutions are uniformly bounded in time or not.

\subsection{Schauder regularity for solutions with uniformly bounded gradients}\ \smallskip
    
\noindent We assume H\"older regularity in $x$ of $H(t,x,p)$, locally uniformly in $p$:
\medskip
\begin{enumerate}
\myitem{$\mathbf{(H_x)}$}\label{assump:H}
There exists $\beta \in [0,1]$, such that for each $R>0$ there is a constant $M_R>0$ for which for all $t\in[0,T]$, $p\in B(0,R)$, and $x,x' \in \R^d$ with $|x-x'|<1$,  
      $$|H(t, x, p)-H(t, x', p)| \leq M_R|x-x'|^\beta.$$
\end{enumerate}

\begin{rem}
    This condition is consistent with \ref{assump:H2}, and when $\beta=0$ it follows from \ref{assump:H2}. An example satisfying both \ref{assump:H2} and \ref{assump:H} is $H(t,x,p)=b(t,x)g(p)+f(t,x)$ for $b,f\in C_b([0,T];C^\beta(\R^d))$ and locally Lipschitz $g$.
\end{rem}
We now give the $C^{\alpha+\beta}_b$ Schauder regularity result for solutions that are Lipschitz in $x$, uniformly in~$t$.
\begin{thm}[Schauder regularity (I)]
\label{thm::full_schauder_regularity}
Assume \ref{assump:H2}, \ref{assump:H1}, \ref{assump:H} for some $\beta \in [0,1]$, \ref{NDa} with $\alpha\in (1,2]$,
 $\alpha+\beta \notin \mathbb{N}$, $T>0$, 
and $u\in C_b((0,T],C^1_b(\R^d))$ is a mild solution of \eqref{eq:hjb}.  Then the following statements hold:\smallskip

\noindent (i) \ 
$u(t)\in C_b^{\alpha +\beta}(\R^d)$ for $t\in (0,T]$,\medskip

\noindent (ii) \ 
$
[D^{\lfloor \alpha+\beta \rfloor} u(t,\cdot)]_{\{\alpha+\beta\}
} = O(t^{-\frac{\alpha+\beta-1}{\alpha}})$ as $t \rightarrow 0$,\qquad where $\{\alpha+\beta\}=\alpha+\beta-\lfloor \alpha+\beta \rfloor$.\medskip

\noindent (iii) \ If $\alpha+\beta> 2$, then $\| D^2u(t,\cdot)\|_\infty=O(t^{-\frac{1}{\alpha}})$ as $t\rightarrow 0$.
\end{thm}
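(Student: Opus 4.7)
My plan is to apply the Duhamel decomposition
\(u(t,x) = P_t u_0(x) + v(t,x)\) with
\(v(t,x) := \int_0^t P_{t-s}[f(s,\cdot)](x)\, ds\)
and
\(f(s,x) := H(s,x,Du(s,x))\),
and then estimate the two summands separately using the tools from Section~\ref{sec::operator_L_and_heat_kernel}. Since $u_0 \in C_b^1$ by \ref{assump:H1}, Theorem~\ref{lem::lem1} applied with input exponent $1$ immediately gives
\[
\|D^2 P_t u_0\|_\infty \leq C\, t^{-1/\alpha} \qquad\text{and}\qquad
[D^{\lfloor \alpha+\beta\rfloor} P_t u_0]_{\{\alpha+\beta\}} \leq C\, t^{-(\alpha+\beta-1)/\alpha},
\]
producing exactly the blow-up rates in (ii) and (iii).

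To apply Theorem~\ref{prop:f_bound} to $v$, I need a controlled $C_b^\eta$-bound on $f$. Using $\|Du\|_\infty \leq R_1 := \|u_0\|_{C_b^1}+1$ (since $u \in X_A$), and splitting $|x-x'| < 1$ from $|x-x'| \geq 1$, the assumptions \ref{assump:H} and \ref{assump:H2} with $R = R_1$ yield
\[
\|f(s,\cdot)\|_{C_b^\eta} \leq C_{R_1}\big(1 + [Du(s,\cdot)]_\eta\big) \qquad\text{for every }\eta\in(0,\beta].
\]
Thus the entire problem reduces to controlling $[Du(s,\cdot)]_\eta$ for $\eta$ up to $\beta$.

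Lemma~\ref{lem::duhamel_map_boundedness}(ii), applied to the fixed point $u = S[u]$, supplies the initial bound $[Du(s,\cdot)]_\gamma \leq C\, s^{-\gamma/\alpha}$ for all $\gamma < \alpha-1$. When $\beta < \alpha - 1$, taking $\eta = \beta$ and a single application of Theorem~\ref{prop:f_bound} with $\gamma = \beta/\alpha$ finishes the proof. When $\beta \geq \alpha - 1$, I would bootstrap: starting with $\eta_0 < \alpha-1$ close to $\alpha-1$ and chosen so that $\alpha+\eta_0 \notin \{2,3\}$, Theorem~\ref{prop:f_bound} applied to $v$, combined with the $P_t u_0$ bounds above, yields $u(t,\cdot) \in C_b^{\alpha+\eta_0}$, which upgrades the admissible H\"older exponent of $Du$ by approximately $\alpha-1$ (the $D^{\lfloor \alpha+\eta_0\rfloor}$-regularity of $v$ translates into $Du$-regularity directly if $\alpha+\eta_0 < 2$, or via the $D^2 u$-bound and interpolation if $\alpha+\eta_0 > 2$). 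Iterating $\lceil\beta/(\alpha-1)\rceil$ times, the exponent reaches $\beta$, and a final application of Theorem~\ref{prop:f_bound} with $\eta = \beta$, $\gamma = \beta/\alpha$, produces $[D^{\lfloor\alpha+\beta\rfloor} v(t,\cdot)]_{\{\alpha+\beta\}} \leq C\, t^{-\beta/\alpha}$ and, if $\alpha+\beta > 2$, also $\|D^2 v(t,\cdot)\|_\infty \leq C\, t^{(\alpha-2)/\alpha}$.

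Summing the $P_t u_0$ and $v$ bounds yields (i) directly and establishes (ii) and (iii) after observing that $t^{-(\alpha+\beta-1)/\alpha}$ dominates $t^{-\beta/\alpha}$ and $t^{-1/\alpha}$ dominates $t^{(\alpha-2)/\alpha}$ whenever $\alpha > 1$. The main obstacle is the bootstrap: one has to track the blow-up rate through finitely many iterations and choose each intermediate exponent $\eta_n$ to sidestep the excluded values $\alpha+\eta_n \in \{2,3\}$ from Theorem~\ref{prop:f_bound}, which can be arranged by arbitrarily small perturbations.
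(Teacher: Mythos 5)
Your proposal is correct in substance but reaches the key intermediate estimate --- control of $[Du(t,\cdot)]_\beta$ with the rate $t^{-\beta/\alpha}$ --- by a genuinely different route than the paper. The paper proves this in one step (Lemma~\ref{lem:H_regularity}) via a Gr\"onwall argument: using that \ref{assump:H2} and \ref{assump:H} give $[H(s,\cdot,Du(s,\cdot))]_\beta \leq K + L[Du(s,\cdot)]_\beta$, the H\"older quotient $w_h(t)=\sup_x|Du(t,x+h)-Du(t,x)|/|h|^\beta$ satisfies a closed linear integral inequality with kernel $(t-s)^{-1/\alpha}$, and Lemma~\ref{lem:generalized_gronwall} yields $w_h(t)\lesssim t^{-\beta/\alpha}$ uniformly in $h$; a single application of Theorem~\ref{prop:f_bound} then finishes. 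You instead seed the argument with the crude bound $[Du(t,\cdot)]_\gamma\lesssim t^{-\gamma/\alpha}$ for $\gamma<\alpha-1$ from Lemma~\ref{lem::duhamel_map_boundedness}~(ii) and bootstrap through Theorem~\ref{prop:f_bound}, gaining $\alpha-1$ in the H\"older exponent of $Du$ per iteration (or jumping straight to a $\|D^2u\|_\infty\lesssim t^{-1/\alpha}$ bound plus interpolation once $\alpha+\eta_n>2$). I checked that the rates propagate consistently: at each stage the $P_tu_0$ term contributes $t^{-(\alpha+\eta_n-1)/\alpha}=t^{-\eta_{n+1}/\alpha}$, which is exactly the weight needed for the next application of Theorem~\ref{prop:f_bound} with $\gamma_{n+1}=\eta_{n+1}/\alpha<1$, and the final rates are dominated by the initial-data terms, giving (ii) and (iii). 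Your route avoids the generalized Gr\"onwall lemma entirely (and is close in spirit to the bootstrap the paper itself uses in the unbounded-gradient case, Lemma~\ref{lem:H_regularity_case_B}, Case 2), at the cost of a case distinction on $\beta$ versus $\alpha-1$, bookkeeping over finitely many iterations, and perturbing the intermediate exponents to avoid $\alpha+\eta_n\in\{2,3\}$; the paper's Gr\"onwall route is shorter and uniform in $\beta$. Two small points to fix when writing this up: the theorem is an a priori statement for any mild solution in $C_b((0,T],C^1_b(\R^d))$ with arbitrary $T$, so you cannot assume $u\in X_A$ with $R_1=\|u_0\|_{C^1_b}+1$; instead set $R_1=\sup_t\|u(t,\cdot)\|_{C^1_b}$ (finite by hypothesis) and note that the proof of Lemma~\ref{lem::duhamel_map_boundedness}~(ii) only uses $\|H(\cdot,\cdot,Du)\|_\infty\leq L_{R_1}R_1+H_0$, so it applies verbatim. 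Also, when $\beta=0$ the reduction $\|f(s,\cdot)\|_{C^\eta_b}\lesssim 1+[Du(s,\cdot)]_\eta$ is vacuous and one applies Theorem~\ref{prop:f_bound} with $\beta=0$ directly, using only $\|f\|_\infty\leq L_{R_1}R_1+H_0$.
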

\begin{rem} 
This a priori result holds for arbitrary $T>0$ provided a mild solution exists in $[0,T]$.
\end{rem}

To prove Theorem \ref{thm::full_schauder_regularity} we first show an intermediate result on $C^{1+\beta}_b$ regularity of $u$. Then a bootstrap argument using Theorem~\ref{prop:f_bound} will give the full regularity.

\begin{lem}\label{lem:H_regularity}
    Assume \ref{NDa} with $\alpha\in (1,2]$,
    \ref{assump:H2}, \ref{assump:H1}, and \ref{assump:H} holds with some $\beta\in [0,1]$. If $u\in C_b((0,T];C^1_b(\R^d))$ is a mild solution of \eqref{eq:hjb}, then 
    $$\sup_{t \in (0,T]}[t^{\frac{\beta}{\alpha}}Du(t,\cdot)]_{\beta}<\infty \qquad \text{and} \qquad \sup_{t \in (0,T]}[t^{\frac{\beta}{\alpha}}H(t,\cdot, Du(t,\cdot))]_{\beta}<\infty.$$
\end{lem}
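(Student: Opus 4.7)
The plan is to derive a singular Volterra inequality for $g(t) := [Du(t, \cdot)]_\beta$ from the Duhamel formula and close it via a fractional Grönwall argument; the second statement then follows from the structure of $H$. The case $\beta = 0$ is immediate, since $[Du(t,\cdot)]_0 = \|Du(t,\cdot)\|_\infty$ is uniformly bounded on $(0,T]$ because $u \in C_b((0,T]; C_b^1(\R^d))$, and $[H(t,\cdot,Du(t,\cdot))]_0 \leq L_R R + H_0$ by \ref{assump:H2} with $R := \sup_t\|Du(t,\cdot)\|_\infty$. So assume $\beta \in (0,1]$. Differentiating the mild formulation gives $Du(t, x) = D P_t u_0(x) + \int_0^t D P_{t-s}[f(s,\cdot)](x)\, ds$, where $f(s,x) := H(s,x,Du(s,x))$. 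By Theorem \ref{lem::lem1}(ii) applied to the Lipschitz $u_0$ (with endpoint adjustment via Theorem \ref{lem::lem1}(i) taking $k=2$ when $\beta=1$), one obtains $[DP_t u_0]_\beta \leq C[u_0]_1 t^{-\beta/\alpha}$. For the integrand, combining \ref{assump:H} with \ref{assump:H2} and the triangle inequality yields
$$|f(s,x)-f(s,y)| \leq M_R|x-y|^\beta + L_R [Du(s,\cdot)]_\beta |x-y|^\beta,$$
so $[f(s,\cdot)]_\beta \leq M_R + L_R g(s)$. A second application of Theorem \ref{lem::lem1}(ii) with $k=1$, $\beta_{\text{thm}}=\beta$, $\gamma=\beta$ gives $[DP_{t-s}[f(s,\cdot)]]_\beta \leq C(t-s)^{-1/\alpha}[f(s,\cdot)]_\beta$, and integrating in $s$ yields
$$g(t) \leq C_1 t^{-\beta/\alpha} + C_2 \int_0^t (t-s)^{-1/\alpha}\big(M_R + L_R\, g(s)\big)\, ds.$$

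The $M_R$ contribution is $O(t^{1-1/\alpha})$, hence bounded on $(0,T]$ since $\alpha > 1$, so the essential task is to handle the doubly singular Volterra inequality. The main obstacle is that both the kernel $(t-s)^{-1/\alpha}$ and the expected blow-up $g(s) \sim s^{-\beta/\alpha}$ are singular, so the standard Grönwall lemma does not apply directly. I plan to invoke the generalized ``doubly fractional'' Grönwall inequality of Lemma \ref{lem:generalized_gronwall_2} referenced in the introduction. As a sanity check on the scaling, setting $G(t) := \sup_{s \in (0,t]}s^{\beta/\alpha}g(s)$ and using $g(s) \leq G(t)\, s^{-\beta/\alpha}$ together with the Beta identity $\int_0^t (t-s)^{-1/\alpha}s^{-\beta/\alpha}\, ds = B(1-\tfrac{1}{\alpha},1-\tfrac{\beta}{\alpha})\,t^{1-(1+\beta)/\alpha}$ gives an estimate of the form $G(t) \leq C + C' t^{1-1/\alpha} G(t)$; this closes immediately on a short time interval, and the bound is extended to arbitrary $T>0$ either in one step via Lemma \ref{lem:generalized_gronwall_2}, or by iterating using the time-translation Lemma \ref{lem::duhamel_map_time_translation}, restarting from any small positive time at which $u$ already lies in $C^{1+\beta}_b$. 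A priori finiteness of $g(t)$ is ensured by working first with the truncated seminorm $\sup_{0<|h|\leq K}|Du(t,x+h)-Du(t,x)|/|h|^\beta$ and letting $K\to\infty$ once the $K$-uniform estimate is in hand.

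The second statement of the lemma is then immediate: from $g(t) \leq C t^{-\beta/\alpha}$ and $[H(t,\cdot,Du(t,\cdot))]_\beta \leq M_R + L_R g(t)$, we obtain $t^{\beta/\alpha}[H(t,\cdot,Du(t,\cdot))]_\beta \leq M_R T^{\beta/\alpha} + L_R C$ on $(0,T]$. Apart from the Grönwall step, everything is a routine application of the semigroup regularization estimates of Section~\ref{sec::operator_L_and_heat_kernel} together with the structural assumptions \ref{assump:H2} and \ref{assump:H} on the Hamiltonian.
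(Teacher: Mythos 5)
Your proposal follows essentially the same route as the paper: differentiate the Duhamel formula, split the H\"older increment of $H(s,\cdot,Du(s,\cdot))$ via the triangle inequality into an $x$-H\"older part (assumption \ref{assump:H}) and a $Du$-increment part (assumption \ref{assump:H2}), arrive at a singular Volterra inequality with kernel $(t-s)^{-1/\alpha}$ and inhomogeneity $t^{-\beta/\alpha}$, close it by a generalized Gr\"onwall lemma, and deduce the second claim from the first by the same triangle inequality. (The paper invokes Lemma \ref{lem:generalized_gronwall} rather than Lemma \ref{lem:generalized_gronwall_2}; the latter also works here after absorbing the constant term into the $t^{-\beta/\alpha}$ term, but is not needed since the kernel has no singularity at $s=0$.)

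Two details need repair. First, your a priori finiteness device points the wrong way: the truncated seminorm $\sup_{0<|h|\le K}|Du(t,x+h)-Du(t,x)|/|h|^\beta$ still probes arbitrarily small $|h|$ and is therefore not known to be finite before the estimate is established --- the possible blow-up in $[Du(t,\cdot)]_\beta$ comes from small $|h|$, not large $|h|$ (for large $|h|$ the quotient is trivially $\le 2\|Du\|_\infty/|h|^\beta$). The correct regularization is to fix $h$ (equivalently, restrict to $|h|\ge\epsilon$), as the paper does with $w_h(t):=\sup_x|Du(t,x+h)-Du(t,x)|/|h|^\beta\le 2\|Du\|_\infty/|h|^\beta<\infty$, run Gr\"onwall with $h$-independent constants, and only then take the supremum over $h$. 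This also removes the circularity in applying Theorem \ref{lem::lem1}(ii) to $f(s,\cdot)=H(s,\cdot,Du(s,\cdot))$, whose membership in $C_b^\beta$ is precisely what is being proved; the paper avoids it by estimating the fixed-$h$ increment of the convolution directly against $\|Dp_{t-s}\|_{L^1}$. Second, \ref{assump:H} controls $|H(t,x,p)-H(t,x',p)|$ only for $|x-x'|<1$; for $|x-x'|\ge 1$ one must fall back on the uniform bound $|H|\le L_{R}R+H_0$, which is why the paper's constant is $K_{R_1}=\max\{M_{R_1},2(L_{R_1}R_1+H_0)\}$. Both issues are local and fixable without changing your overall argument.
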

\begin{proof}
    The result is immediate for $\beta=0$ by 
    \ref{assump:H2}, so consider $\beta>0$. We begin as in the proof of Lemma \ref{lem::duhamel_map_boundedness} (ii), but now we exploit the regularity of $H$. 
By the definition of a mild solution,
\begin{align*}
    \frac{|Du(t,x+h)-Du(t,x)|}{|h|^\beta} \leq \frac{|DP_tu_0(x+h)-DP_tu_0(x)|}{|h|^\beta}+\int_0^t \frac{|g_{t-s}(x+h)-g_{t-s}(x)|}{|h|^\beta},
\end{align*}
where $g_{t-s}(x):=\int_{\R^d} Dp_{t-s}(y) H(s,x-y,Du(s,x-y)) \, dy$. 

 By 
Theorem \ref{lem::lem1},  $[ DP_t u_0 ]_{\beta}\leq \|Du_0\|_{\infty}2^{1-\beta}c_{2,1,d}t^{-\frac{\beta}{\alpha}}$. Assumptions \ref{assump:H2} and \ref{assump:H} yield
\begin{align*}
    &|g_{t-s}(x+h)-g_{t-s}(x)|\\
    &\leq \Big|\int_{\R^d} Dp_{t-s}(y)\big(  H(s,x-y+h,Du(s,x-y+h))-H(s,x-y+h,Du(s,x-y))\big)\, dy \Big| \\
    &\qquad + \Big|\int_{\R^d} Dp_{t-s}(y)\big(   H(s,x-y+h,Du(s,x-y))-H(s,x-y,Du(s,x-y)) \big) \, dy \Big| \\
    &\leq  c_0(t-s)^{-\frac{1}{\alpha}}\bigg( L_{R_1}\sup_{x\in\R^d}|Du(s,x+h)-Du(s,x)| + \begin{cases}
    M_{R_1}|h|^\beta, &|h|<1,\\
    2(L_{R_1}R_1+H_0)|h|^\beta, &|h|\geq 1
    \end{cases}    \bigg),
\end{align*}
where $R_1=\|Du\|_\infty$.
Let $\overline M_{R_1}:=\max \{M_{R_1}, 2(L_{R_1}R_1+H_0) \}$. Integrating in time, we see that
\begin{align*}
    \frac{1}{|h|^\beta}\int_0^t &|g_{t-s}(x+h)-g_{t-s}(x)| \; ds \\ &\leq c_0\frac{T^{1-\frac{1}{\alpha}}}{1-\frac{1}{\alpha}}\overline M_{R_1}
    + c_0L_{R_1}\int_0^t(t-s)^{-\frac{1}{\alpha}} \sup_{x\in \R^d}\frac{|Du(s,x+h)-Du(s,x)|}{|h|^\beta} \, ds.
\end{align*}
Consequently, by summing up the estimates,
\begin{align}\label{eq:whdef}
w_{h}(t)&:=\sup_{x \in \R^d} \frac{|Du(t,x+h)-Du(t,x)|}{|h|^\beta} \\
&\leq \|Du_0\|_{\infty}2^{1-\beta}c_{2,1,d}\,
t^{-\frac{\beta}{\alpha}} +c_0 \overline M_{R_1} \frac{T^{1-\frac{1}{\alpha}}}{1-\frac{1}{\alpha}}  +c_0L_{R_1}\int_0^t(t-s)^{-\frac{1}{\alpha}} w_{h}(s) \, ds.\nonumber
\end{align}
We now want to use the generalized Grönwall inequality of Lemma~\ref{lem:generalized_gronwall}. We indeed have that $w_{h}(t)$ is locally integrable on $(0,T]$, since
\begin{align}\label{eq:w_h_loc_int}
    0 \leq w_{h}(t) = \sup_{x\in \R^d} \frac{|Du(t,x+h)-Du(t,x)|}{|h|^{{\beta}}} \leq 
    \frac{2}{|h|^{{\beta}}}\|Du\|_{\infty} <\infty,
\end{align}
for each fixed $h$. 
Hence, all the assumptions of Lemma~\ref{lem:generalized_gronwall} are satisfied and we get 
\begin{align}\label{eq:H_regularity_gronwall_calc}
    w_{h}(t)
     &\leq   \Big(2^{-\beta} c_{2,1,d}\, 
     \|Du_0\|_{\infty} + C_{1} T^{1-\frac{1}{\alpha}}\Big)t^{-\frac{{\beta}}{\alpha}} +c_0 \overline M_{R_1}\frac{T^{1-\frac{1}{\alpha}}}{1-\frac{1}{\alpha}}+C_2T^{1-\frac{1}{\alpha}},
\end{align}
where $C_1 = C_1(T,\alpha,\beta,R_1,u_0)$ and $C_2=C_2(T,\alpha,R_1)$. Importantly, the constants do not depend on $h$. Thus, for each $t\in (0,T]$, taking the supremum over $|h|$ yields
\begin{align*}
    [Du(t)]_{\beta}  < \infty,
\end{align*}
so $Du(t,\cdot)\in C_b^\beta(\R^d)$. Multiplying \eqref{eq:H_regularity_gronwall_calc} by $t^{\frac{\beta}{\alpha}}$, we get $\sup_{t \in (0,T]}[t^{\frac{\beta}{\alpha}}Du(t,\cdot)]_{\beta}<\infty$.

The following computation completes the proof: 
If $|x-x'|<1$, then
\begin{align*}
    &|H(t,x,Du(t,x))-H(t,x',Du(t,x'))| \\
    &\leq |H(t,x,Du(t,x))-H(t,x',Du(t,x))|+|H(t,x',Du(t,x))-H(t,x',Du(t,x'))| \\
    &\leq M_{R_1}|x-x'|^\beta+L_{R_1}|Du(t,x)-Du(t,x')| \\
    &\leq (M_{R_1}+L_{R_1}[Du(t,\cdot)]_{\beta})|x-x'|^\beta,
\end{align*}
whence we find that
$$[H(t,\cdot,Du(t,\cdot))]_\beta \leq L_{R_1}[Du(t,\cdot)]_\beta + \overline M_{R_1} < \infty,$$ 
where we have used assumptions \ref{assump:H2} and \ref{assump:H}, and that $Du(t,\cdot)\in C_b^\beta(\R^d)$.
\end{proof}

\begin{proof}[Proof of Theorem \ref{thm::full_schauder_regularity}] 
Differentiating both sides of $u=S[u]$ for a fixed $t\in(0,T]$, we get 
\begin{align}\label{eq:full_schauder_duhamel}
    Du(t,x) = P_t\big[Du_0\big](x)+\int_0^t DP_{t-s}\big[H(s,\cdot,Du(s,\cdot))\big](x)\, ds.
\end{align}
Let
$v(t,x):
    =\int_0^t s^{-\frac{\beta}{\alpha}} DP_{t-s}\big[s^{\frac{\beta}{\alpha}}H(s,\cdot,Du(s,\cdot))\big](x)\, ds.$

\medskip
\noindent\textbf{Case 1:} $\alpha + \beta \in (1,2)$ and $\{\alpha+\beta\}=\alpha+\beta-1$.
By \eqref{eq:full_schauder_duhamel}, Lemma \ref{lem:H_regularity}, Theorem \ref{lem::lem1} (ii), and Theorem \ref{prop:f_bound} (ii),
\begin{equation}
\begin{aligned}\label{eq::full_schauder_case_1_calc}
    [Du(t,\cdot)]_{\alpha+\beta-1} &\leq [P_t[Du_0]]_{\alpha+\beta-1}+[v(t,\cdot)]_{\alpha+\beta-1} \\
    &\leq \|Du_0\|_{\infty} 2^{2-(\alpha+\beta)}c_{2,1,d}\, t^{-\frac{\alpha+\beta-1}{\alpha}}\\ &\qquad \qquad  +t^{-\frac{\beta}{\alpha}} \overline{c}_{\frac{\beta}{\alpha},\alpha,\beta} \sup_{t\in(0,T]}\|t^{\frac{\beta}{\alpha}}H(t,\cdot,Du(t,\cdot)) \|_{C^\beta_b} < \infty. 
\end{aligned}
\end{equation}

\medskip
\noindent\textbf{Case 2:} $\alpha+\beta \in (2,3)$
and $\{\alpha+\beta\}=\alpha+\beta-2$. 
Since $(t-s)^{-\frac{2-\beta}{\alpha}}=(t-s)^{-1+\frac{\alpha+\beta-2}{\alpha}}$ is integrable at $s=t$, by
 Theorem \ref{lem::lem1} and the Dominated Convergence Theorem, 
\begin{align*}
    D^2u(t,x) 
            &= DP_t[Du_0](x)+\int_0^t D^2P_{t-s}\big[H(s,\cdot,Du(s,\cdot))\big](x)\, ds, \ \ t\in(0,T],\ x\in \R^d.
\end{align*}
By Theorem \ref{lem::lem1} (i) (and $c_{1,0,d}=c_0$), Lemma \ref{lem:H_regularity}, and Theorem \ref{prop:f_bound} (i), 
we get that
\begin{align}\label{eq::full_schauder_second_deriv_blowup}
\|D^2 u(t)\|_\infty \leq t^{-\frac{1}{\alpha}}c_0\|Du_0\|_\infty+t^{-\frac{2-\alpha}{\alpha}} c^{(2)}_{\frac{\beta}{\alpha},\alpha,\beta} \sup_{t \in (0,T]} \|t^{\frac{\beta}{\alpha}}H(t,\cdot, Du(t,\cdot)) \|_{C_b^{\beta}}<\infty.    
\end{align}
To get the blow-up rate for the optimal H\"older seminorm we do similar calculations as  in Case 1:
\begin{align*}
    [D^2u(t,\cdot)]_{\alpha+\beta-2} 
    \leq &\ \|Du_0\|_{\infty} 2^{3-(\alpha+\beta)} 
    c_{3,1,d} \, t^{-\frac{\alpha+\beta-1}{\alpha}}  \\ 
    &\qquad +t^{-\frac{\beta}{\alpha}} \overline{c}_{\frac{\beta}{\alpha},\alpha,\beta} \sup_{t\in(0,T]}\|t^{\frac{\beta}{\alpha}}H(t,\cdot,Du(t ,\cdot)) \|_{C^\beta_b} < \infty. \qedhere
\end{align*} 
\end{proof}

\subsection{Schauder regularity of solutions with unbounded gradients}\ \smallskip

\noindent In order to handle blow-up rates for higher order H\"older norms with non-Lipschitz initial conditions we use an $x$-H\"older condition on $H(t,x,p)$ with at most polynomial growth in $p$ of order $r$.\smallskip
\begin{enumerate}
\myitem{$\mathbf{(H_x')}$}\label{assump:B3}
 There exist $\beta\in [0,1]$ and $M>0$ such that for $t\in(0,T]$ and $x,x',p \in \R^d$ with $|x-x'|<1$,  
       $$|H(t, x, p)-H(t, x', p)| \leq M(1+|p|^2)^{\frac{r}{2}}|x-x'|^\beta,$$
      where $r$ is defined in 
      \ref{assump:B2_new}.
\end{enumerate}

\begin{rem}
    This assumption is consistent with \ref{assump:B2_new}, and when $\beta=0$ it follows from \ref{assump:B2_new}. An example satisfying both \ref{assump:B2_new} and \ref{assump:B3} is $H(t,x,p)=b(t,x)|p|^r+f(t,x)$ for $b,f\in C([0,T];C_b^\beta(\R^d))$.
\end{rem}

 We now give the $C^{\alpha+\beta}_b$ Schauder regularity result for solutions with spatial gradients that blow up as $t\to 0$.

\begin{thm}[Schauder regularity (II)]\label{thm:full_schauder_regularity_case_b}
    Assume \ref{assump:B2_new}, \ref{assump:B1}, \ref{assump:B3} for some $\beta \in [0,1]$, \ref{NDa} with $\alpha\in (1,2]$,
    and
 $\alpha+\beta \notin \mathbb{N}$, and $u\in C((0,T],C^1_b(\R^d))$ is a mild solution of \eqref{eq:hjb}. Then the following statements hold:\medskip

\noindent (i) \ $u(t)\in C_b^{\alpha +\beta}(\R^d)$ for each $t\in (0,T]$. 

\medskip\noindent (ii) \ $[D^{ \lfloor\alpha+\beta\rfloor}
u(t,\cdot)]_{\{\alpha+\beta\}}
= O(t^{-\frac{\alpha+\beta-\delta}{\alpha} })$ as $t \rightarrow 0$, \qquad where $\{\alpha+\beta\}=\alpha+\beta-\lfloor\alpha+\beta\rfloor$.

\medskip\noindent (iii) \ If $\alpha+\beta> 2$, then $\| D^2u(t,\cdot)\|_\infty=O(t^{-\frac{2-\delta}{\alpha}})$ as $t\rightarrow 0$. 
\end{thm}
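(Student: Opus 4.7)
The strategy mirrors the proof of Theorem~\ref{thm::full_schauder_regularity} but must accommodate the fact that, by Theorem~\ref{thm::case_b_hjb_sol_existence}, $\|Du(t,\cdot)\|_\infty=O(t^{-(1-\delta)/\alpha})$ blows up as $t\to 0$. The proof decomposes into two stages: first, an intermediate estimate on $[Du(t,\cdot)]_\beta$ and $[H(t,\cdot,Du(t,\cdot))]_\beta$, playing the role of Lemma~\ref{lem:H_regularity}; second, a bootstrap via Theorems~\ref{lem::lem1} and \ref{prop:f_bound} to reach the optimal Hölder regularity of order $\alpha+\beta$, treating the cases $\alpha+\beta\in(1,2)$ and $\alpha+\beta\in(2,3)$ separately.

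\textbf{Intermediate estimate.} Set $w_h(t):=\sup_{x\in\R^d}|h|^{-\beta}|Du(t,x+h)-Du(t,x)|$ and differentiate the Duhamel formula. Theorem~\ref{lem::lem1}(ii) applied to $u_0\in C_b^\delta$ with $(k,\gamma)=(1,\beta)$ gives $[DP_tu_0]_\beta\leq C[u_0]_\delta\,t^{-(1+\beta-\delta)/\alpha}$. Splitting each increment $H(s,x+h-y,Du(s,x+h-y))-H(s,x-y,Du(s,x-y))$ as in the proof of Lemma~\ref{lem:H_regularity}, the $p$-variation is controlled by \ref{assump:B2_new} by $L(1+2\|Du(s,\cdot)\|_\infty^2)^{(r-1)/2}|Du(s,x+h-y)-Du(s,x-y)|$, and the $x$-variation by \ref{assump:B3} by $M(1+\|Du(s,\cdot)\|_\infty^2)^{r/2}|h|^\beta$. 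Convolving with $|Dp_{t-s}|$, using \ref{NDa}, and substituting the blow-up $\|Du(s,\cdot)\|_\infty\leq Cs^{-(1-\delta)/\alpha}$ yields a doubly-fractional Grönwall inequality of the form
\begin{equation*}
w_h(t)\leq A\,t^{-\frac{1+\beta-\delta}{\alpha}}+K_1\!\int_0^t(t-s)^{-\frac1\alpha}s^{-\frac{(r-1)(1-\delta)}{\alpha}}w_h(s)\,ds+K_2\!\int_0^t(t-s)^{-\frac1\alpha}s^{-\frac{r(1-\delta)}{\alpha}}\,ds.
\end{equation*}
Assumption \ref{assump:B1} ensures that $\tfrac{r(1-\delta)}{\alpha}<1$ and $\tfrac1\alpha+\tfrac{(r-1)(1-\delta)}{\alpha}<1$, so all integrals converge. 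The a priori bound $w_h(s)\leq 2|h|^{-\beta}\|Du(s,\cdot)\|_\infty=O(s^{-(1-\delta)/\alpha})$ gives the local integrability needed to apply Lemma~\ref{lem:generalized_gronwall_2}, producing $w_h(t)\leq Ct^{-(1+\beta-\delta)/\alpha}$ uniformly in $h$. Hence $\sup_{t\in(0,T]}t^{(1+\beta-\delta)/\alpha}[Du(t,\cdot)]_\beta<\infty$, and combining with \ref{assump:B2_new}, \ref{assump:B3} one obtains $[H(t,\cdot,Du(t,\cdot))]_\beta=O(t^{-\gamma_0})$ with $\gamma_0:=\tfrac{r(1-\delta)+\beta}{\alpha}$.

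\textbf{Bootstrap to full regularity.} Knowing that $s\mapsto s^{\gamma_0}H(s,\cdot,Du(s,\cdot))$ is bounded in $C_b^\beta(\R^d)$, I apply Theorem~\ref{prop:f_bound} to the integral term in the derivative of the Duhamel formula. In Case~1 ($\alpha+\beta\in(1,2)$, $\lfloor\alpha+\beta\rfloor=1$), Theorem~\ref{lem::lem1}(ii) gives $[DP_tu_0]_{\alpha+\beta-1}\leq C[u_0]_\delta\,t^{-(\alpha+\beta-\delta)/\alpha}$, while Theorem~\ref{prop:f_bound}(ii) bounds the integral contribution by $Ct^{-\gamma_0}$. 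The key algebraic fact $\gamma_0\leq(\alpha+\beta-\delta)/\alpha$, equivalent to $\delta\geq(r-\alpha)/(r-1)$, is exactly \ref{assump:B1}; both contributions are thus controlled by $t^{-(\alpha+\beta-\delta)/\alpha}$, yielding (i) and (ii). In Case~2 ($\alpha+\beta\in(2,3)$, $\lfloor\alpha+\beta\rfloor=2$), the integrability of $(t-s)^{-(2-\beta)/\alpha}$ at $s=t$ permits differentiating twice under the integral; Theorem~\ref{lem::lem1}(i) with $k=2$ applied to $DP_tu_0$ together with Theorem~\ref{prop:f_bound}(i) with $k=2$ and rate $\gamma_0$ delivers the blow-up bound in (iii), while Theorem~\ref{lem::lem1}(ii) for $D^2P_tu_0$ combined with Theorem~\ref{prop:f_bound}(ii) at $(k,\gamma)=(2,\alpha+\beta-2)$ yields (ii).

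\textbf{Main obstacle.} The central difficulty is the intermediate Hölder estimate on $Du$: the simultaneous presence of the singular kernel $(t-s)^{-1/\alpha}$ and the blow-up $s^{-(r-1)(1-\delta)/\alpha}$ in the coefficient of $w_h(s)$ precludes the classical fractional Grönwall inequality used in Lemma~\ref{lem:H_regularity} and is precisely the reason for introducing Lemma~\ref{lem:generalized_gronwall_2}. A secondary subtlety is that when $r$ is large and $\delta$ close to $(r-\alpha)/(r-1)$, the critical exponent $\tfrac1\alpha+\tfrac{(r-1)(1-\delta)}{\alpha}$ is close to $1$; in such regimes the sharp rate $t^{-(1+\beta-\delta)/\alpha}$ may be reachable only after a bootstrap, starting from the crude a priori estimate and iterating in successively refined weighted spaces. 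Finally, the endpoint $\alpha+\beta\in\mathbb N$ is excluded by hypothesis, consistent with Remark~\ref{rem:w-thm}(b).
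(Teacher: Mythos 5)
Your overall architecture (intermediate $C^{1+\beta}$ estimate via a doubly-fractional Gr\"onwall inequality, then upgrade to $C^{\alpha+\beta}$ via Theorems~\ref{lem::lem1} and \ref{prop:f_bound}) matches the paper's, and it is complete in the regime $\delta\geq\beta$. But there are two genuine gaps in the regime $\delta<\beta$ — which is not exotic: it contains every case with $\beta=1$, and the case $\delta=0$, $\beta>\alpha-r$. First, Lemma~\ref{lem:generalized_gronwall_2} requires not only $\overline\nu=\overline\beta+\overline\gamma-1>0$ (your integrability check) but also $\overline\alpha>0$ and $\overline\delta=\overline\alpha+\overline\gamma-1>0$, i.e.
$1+\beta-\delta+(r-1)(1-\delta)<\alpha$. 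Under \ref{assump:B1} one has $(r-1)(1-\delta)<\alpha-1$, so this holds when $\delta\geq\beta$ but can fail when $\delta<\beta$ (e.g. $\alpha=1.1$, $r=1$, $\delta=0$, $\beta=1/2$ gives $\overline\alpha<0$). Your ``secondary subtlety'' misdiagnoses the failure: it is not that $\tfrac1\alpha+\tfrac{(r-1)(1-\delta)}{\alpha}$ approaches $1$ (that exponent is always subcritical under \ref{assump:B1}); it is the competition between $\beta$ and $\delta$ in the inhomogeneous term $t^{-\frac{1+\beta-\delta}{\alpha}}$. The paper resolves this with a genuine case distinction: for $\delta<\beta$ it first runs the Gr\"onwall argument at a small exponent $\beta_0\leq\delta$ (or $\beta_0<\alpha-r$ when $\delta=0$), obtains $C^{1+\beta_0}$, then uses the full Schauder step to reach $C^{\alpha+\beta_0}$, and iterates with $\beta_{k+1}=\beta_k+(\alpha-1)$ until exceeding $\beta$. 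You gesture at ``a bootstrap in successively refined weighted spaces'' but never execute it, and as stated your Gr\"onwall step is simply invalid for part of the admissible parameter range.

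Second, in the final upgrade you apply Theorem~\ref{prop:f_bound}(ii) with blow-up rate $\gamma_0=\tfrac{r(1-\delta)+\beta}{\alpha}$. That theorem assumes $\gamma\in[0,1)$, and \ref{assump:B1} only guarantees $r(1-\delta)<\alpha-\delta$, so $\gamma_0<1$ again only when $\delta\geq\beta$; for $\delta<\beta$ one can have $\gamma_0\geq 1$ (e.g. $\alpha=1.5$, $r=1.2$, $\delta=0$, $\beta=0.5$), and then the hypothesis of Theorem~\ref{prop:f_bound} is violated and the $s$-integral of the $C_b^\beta$-norm of $H$ genuinely diverges at $s=0$. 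The paper circumvents this with a time-shift: restart the Duhamel formula at time $\epsilon$ from the Lipschitz datum $u(\epsilon,\cdot)$, apply the Case-(I) estimate \eqref{eq::full_schauder_case_1_calc} with the always-admissible rate $\beta/\alpha$, bound $\sup_s\|s^{\beta/\alpha}H(s+\epsilon,\cdot,\cdot)\|_{C^\beta_b}\leq\epsilon^{-r(1-\delta)/\alpha}\sup_s\|(s+\epsilon)^{\gamma_0\alpha/\alpha}H\|_{C^\beta_b}$, and evaluate at $t=\epsilon$ to read off the stated rate $\epsilon^{-\frac{\alpha+\beta-\delta}{\alpha}}$. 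Without this device (or an equivalent one) your Case~1 and Case~2 conclusions do not follow for all parameters covered by the theorem.
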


\begin{rem}
    Since solutions are assumed to be 
    Lipschitz for $t>0$, we can use Theorem~\ref{thm::full_schauder_regularity} to see that part (i) holds even if we replace  \ref{assump:B3} by the weaker assumption \ref{assump:H}.
\end{rem}

To prove Theorem \ref{thm:full_schauder_regularity_case_b} $(ii)$ and $(iii)$ we first show an intermediate result on the $C^{1+\beta}_b$ blow-up rate as $t\rightarrow 0$ of $u$. 
 
\begin{lem}\label{lem:H_regularity_case_B}
    Assume \ref{NDa} with $\alpha\in (1,2]$,
     \ref{assump:B3} with 
    $\beta\in [0,1]$, \ref{assump:B2_new} with $r\geq1$,  \ref{assump:B1} with $\delta\in[0,1)$, and $u\in C((0,T],C^1_b(\R^d))$ is a mild solution of \eqref{eq:hjb}. Then 
    \begin{align}\sup_{t \in (0,T]}[t^{\frac{1+\beta-\delta}{\alpha}}Du(t,\cdot)]_{\beta}<\infty\qquad\text{and}\qquad \sup_{t \in (0,T]}[t^{\frac{r(1-\delta)+\beta}{\alpha}}H(t,\cdot, Du(t,\cdot))]_{\beta}<\infty.\label{eq:Hbetareg}\end{align}
\end{lem}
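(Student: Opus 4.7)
\textbf{Proof proposal for Lemma \ref{lem:H_regularity_case_B}.} The plan is to mimic the proof of Lemma~\ref{lem:H_regularity}, but to carefully track the two singular factors that will appear in the resulting integral inequality for the Hölder quotient of $Du$: one from $\|DP_{t-s}\|_{L^1} \leq c_0 (t-s)^{-1/\alpha}$, and a new one of the form $s^{-(r-1)(1-\delta)/\alpha}$ produced by the power-type growth in \ref{assump:B2_new} combined with the blow-up $\|Du(s,\cdot)\|_\infty = O(s^{-(1-\delta)/\alpha})$ given by Theorem~\ref{thm::case_b_hjb_sol_existence}.

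The case $\beta=0$ is immediate from \ref{assump:B2_new} and the gradient blow-up. For $\beta\in(0,1]$, I differentiate the identity $u=S[u]$ and set
\begin{equation*}
    w_h(t) := \sup_{x\in\R^d} \frac{|Du(t,x+h) - Du(t,x)|}{|h|^{\beta}}.
\end{equation*}
The initial data contribution is handled by Theorem~\ref{lem::lem1}(ii) with $k=1$, $\gamma=\beta$ and $\varphi=u_0\in C_b^{\delta}$, giving $[DP_t u_0]_\beta \leq C_0 [u_0]_\delta t^{-(1+\beta-\delta)/\alpha}$. For the convolution part, writing $g_{t-s}(x):=\int DP_{t-s}(y)H(s,x-y,Du(s,x-y))\,dy$ and splitting $g_{t-s}(x+h)-g_{t-s}(x)$ through the intermediate value $H(s,x+h-y,Du(s,x-y))$ produces a $p$-Lipschitz piece bounded via \ref{assump:B2_new} and an $x$-Hölder piece bounded via \ref{assump:B3}; the range $|h|\ge1$ is controlled crudely by $\|H(s,\cdot,Du(s,\cdot))\|_\infty$. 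Inserting the blow-up $\|Du(s,\cdot)\|_\infty = O(s^{-(1-\delta)/\alpha})$ into the growth factors $(1+\|Du\|_\infty^2)^{r/2}$ and $(1+2\|Du\|_\infty^2)^{(r-1)/2}$ leads to an integral inequality of the form
\begin{equation*}
    w_h(t) \leq C_0\, t^{-\frac{1+\beta-\delta}{\alpha}} + C_1 \int_0^t (t-s)^{-\frac{1}{\alpha}} s^{-\frac{r(1-\delta)}{\alpha}}\, ds + C_2 \int_0^t (t-s)^{-\frac{1}{\alpha}} s^{-\frac{(r-1)(1-\delta)}{\alpha}} w_h(s)\, ds,
\end{equation*}
where the deterministic integral is a Beta-type integral: both of its arguments are positive precisely because $r(1-\delta)<\alpha$, which is exactly what \ref{assump:B1} enforces; an elementary calculation shows this term is $O(t^{-(1+\beta-\delta)/\alpha})$ so it is absorbed into the source.

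The main obstacle is to extract the sharp blow-up rate from this inequality when both the source and the Grönwall kernel carry negative powers of their arguments. This is the scenario addressed by the ``doubly fractional'' Grönwall inequality (Lemma~\ref{lem:generalized_gronwall_2}). Local integrability of $w_h$ in $t$ for each fixed $h$ follows from $w_h(t)\leq 2|h|^{-\beta}\|Du(t,\cdot)\|_\infty$ together with the blow-up bound, whose exponent $(1-\delta)/\alpha$ is strictly less than $1$. Applying Lemma~\ref{lem:generalized_gronwall_2} then gives $w_h(t) \leq C\, t^{-(1+\beta-\delta)/\alpha}$ with $C$ independent of $h$. Taking the supremum over $h\neq 0$ yields the first estimate in \eqref{eq:Hbetareg}.

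For the second estimate, the same telescoping used at the end of the proof of Lemma~\ref{lem:H_regularity} gives, for $|x-x'|<1$,
\begin{equation*}
    [H(t,\cdot,Du(t,\cdot))]_\beta \leq M\bigl(1+\|Du(t,\cdot)\|_\infty^2\bigr)^{r/2} + L\bigl(1+2\|Du(t,\cdot)\|_\infty^2\bigr)^{(r-1)/2}[Du(t,\cdot)]_\beta.
\end{equation*}
Substituting $\|Du(t,\cdot)\|_\infty = O(t^{-(1-\delta)/\alpha})$ and the already-established $[Du(t,\cdot)]_\beta = O(t^{-(1+\beta-\delta)/\alpha})$ and adding exponents gives the dominant rate $(r-1)(1-\delta)/\alpha + (1+\beta-\delta)/\alpha = (r(1-\delta)+\beta)/\alpha$, as claimed; the range $|x-x'|\ge 1$ is absorbed by $\|H(t,\cdot,Du(t,\cdot))\|_\infty = O(t^{-r(1-\delta)/\alpha})$, which is strictly dominated.
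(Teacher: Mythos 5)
Your strategy reproduces the paper's argument in the case $\delta\geq\beta$, but there is a genuine gap: you apply Lemma~\ref{lem:generalized_gronwall_2} to the integral inequality without verifying its exponent hypotheses, and these fail precisely when $\delta<\beta$. With source exponent $\overline\alpha-1=-\tfrac{1+\beta-\delta}{\alpha}$ and kernel exponents $\overline\beta-1=-\tfrac1\alpha$, $\overline\gamma-1=-\tfrac{(r-1)(1-\delta)}{\alpha}$, the lemma requires $\overline\delta=\overline\alpha+\overline\gamma-1>0$, i.e. $\alpha-1-(r-1)(1-\delta)>\beta-\delta$. Assumption \ref{assump:B1} only guarantees that the left-hand side is positive, so the condition holds automatically when $\beta\leq\delta$ but can fail otherwise; indeed it even requires $\overline\alpha>0$, i.e. $1+\beta-\delta<\alpha$, which is violated for instance when $\alpha=1.1$, $r=1$, $\delta=0$, $\beta=1$ — a configuration allowed by \ref{assump:B1}. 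So your Grönwall step does not go through in general.

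The paper handles this by splitting into two cases. For $\delta\geq\beta$ your argument is essentially theirs. For $\delta<\beta$ they bootstrap: first they run the Grönwall argument with a small exponent $\beta_0$ (taking $\beta_0=\delta$ if $\delta>0$, or $\beta_0\in(0,\alpha-r)$ if $\delta=0$, so that the hypotheses of Lemma~\ref{lem:generalized_gronwall_2} are met), obtaining \eqref{eq:Hbetareg} with $\beta$ replaced by $\beta_0$; then they feed this into the Schauder machinery (Theorem~\ref{prop:f_bound} via the proof of Theorem~\ref{thm:full_schauder_regularity_case_b}) to upgrade the Hölder exponent of $Du$ from $\beta_0$ to $\beta_1=\alpha+\beta_0-1$, gaining $\alpha-1>0$ per step, and iterate finitely many times until reaching $\beta$. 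Your proof is incomplete without this second stage; everything else (the decomposition of $g_{t-s}$, the local integrability of $w_h$, the absorption of the Beta-type source term, and the final estimate for $[H(t,\cdot,Du(t,\cdot))]_\beta$) matches the paper.
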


The proof is given at the end of the section.

\begin{proof}[Proof of Theorem~\ref{thm:full_schauder_regularity_case_b}]
$(i)$ \ $C^{\alpha+\beta}$ regularity for $t>0$ follows from Theorem~\ref{thm::full_schauder_regularity} since $u(t)$ by assumption is Lipschitz for $t>0$. It remains to show the blow-up rates in $(ii)$ and $(iii)$.
\smallskip

\noindent\textbf{Case 1:} $\alpha+\beta \in (1,2]$ and $\{\alpha+\beta\}=\alpha+\beta-1$. \ Let $0<\epsilon<T$, $t\in (0,T-\epsilon]$. From \eqref{eq::full_schauder_case_1_calc} we have that
\begin{align*}
    [Du(t+\epsilon,\cdot)]_{\alpha+\beta-1}   &\leq \|Du(\epsilon,\cdot)\|_{\infty} \,2^{2-(\alpha+\beta)}c_{2,1,d}\, t^{-\frac{\alpha+\beta-1}{\alpha}} \\ &\qquad +t^{-\frac{\beta}{\alpha}} \overline{c}_{\frac{\beta}{\alpha},\alpha,\beta} \sup_{s\in(0,T-\epsilon]}\|s^{\frac{\beta}{\alpha}}H(s+\epsilon,\cdot,Du(s+\epsilon,\cdot)) \|_{C^\beta_b} \\
    &\leq \|Du(\epsilon,\cdot)\|_{\infty}\, 2^{2-(\alpha+\beta)}c_{2,1,d}\, t^{-\frac{\alpha+\beta-1}{\alpha}} \\ &\qquad +t^{-\frac{\beta}{\alpha}} \epsilon^{-\frac{r(1-\delta)}{\alpha}}\overline{c}
    _{\frac{\beta}{\alpha},\alpha,\beta} \sup_{s\in(0,T-\epsilon]}\|(s+\epsilon)^{\frac{r(1-\delta)+\beta}{\alpha}}H(s+\epsilon,\cdot,Du(s+\epsilon,\cdot)) \|_{C^\beta_b}. 
\end{align*}
Evaluating at $t=\epsilon$ and noting that $\alpha -\delta -r(1-\delta)\geq 0$ by \ref{assump:B1},
\begin{align}
[Du(2\epsilon,\cdot)]_{\alpha+\beta-1} &\leq \epsilon^{-\frac{\alpha+\beta-\delta}{\alpha}} \sup_{s\in(0,T]}\|s^{\frac{1-\delta}{\alpha}} Du(s,\cdot)\|_{\infty}2^{2-(\alpha+\beta)}c_{2,1,d} \nonumber \\ 
& \qquad +\epsilon^{-\frac{r(1-\delta)+\beta}{\alpha}} \overline{c}
_{\frac{\beta}{\alpha},\alpha,\beta} \sup_{s \in(0,T]}\|s^{\frac{r(1-\delta)+\beta}{\alpha}}H(s,\cdot,Du(s,\cdot)) \|_{C^\beta_b}\nonumber\\
    &\leq \epsilon^{-\frac{\alpha+\beta-\delta}{\alpha}}\Big(\sup_{s\in(0,T]}\|s^{\frac{1-\delta}{\alpha}} Du(s,\cdot)\|_{\infty} 2^{2-(\alpha+\beta)}c_{2,1,d}\label{stestim}\\ &\qquad +T^{\frac{\alpha-\delta-r(1-\delta)}{\alpha}} \overline{c}
    _{\frac{\beta}{\alpha},\alpha,\beta} \sup_{s \in(0,T]}\|s^{\frac{r(1-\delta)+\beta}{\alpha}}H(s,\cdot,Du(s,\cdot)) \|_{C^\beta_b} \Big).\nonumber
\end{align}
By Theorem \ref{thm::case_b_hjb_sol_existence} and Lemma \ref{lem:H_regularity_case_B} the expression in the brackets is finite.
\smallskip

\noindent\textbf{Case 2:} $\alpha+\beta \in (2,3)$ and $\{\alpha+\beta\}=\alpha+\beta-2$.  The Hölder estimate follows in a similar manner and yields the same blow-up rate as Case 1. The  blow-up rate for $\|D^2u(t,\cdot)\|_\infty$ can be shown in a similar way from \eqref{eq::full_schauder_second_deriv_blowup}.
\end{proof}

We now prove Lemma \ref{lem:H_regularity_case_B}. In the proof we have to distinguish between two cases, $\delta\geq \beta$ and $\delta<\beta$. The second case relies on a bootstrapping argument that uses the result from the first case and reuses arguments from the proofs of both the first case and Theorem \ref{thm:full_schauder_regularity_case_b} above.

\begin{proof}[Proof of Lemma \ref{lem:H_regularity_case_B}]
The case $\beta = 0$ follow from Theorem \ref{thm::case_b_hjb_sol_existence} and assumption \ref{assump:B2_new}, so assume $\beta>0$.\smallskip

\noindent\textbf{Case 1:} $\delta\geq\beta$.  \ 
By Theorem~\ref{lem::lem1},
\begin{align}\label{eq:Du0beta}
    [ DP_t u_0 ]_{\beta} &\leq 
   2^{1-\beta} c_{2,\delta,d} [u_0]_\delta t^{-\frac{1+\beta-\delta}{\alpha}}.
\end{align}
 Let $g_{t-s}(x):=\int_{\R^d} Dp_{t-s}(y) H(s,x-y,Du(s,x-y)) \, dy$, and define $$G:=\sup_{t\in (0,T]}\big(t^{2\frac{1-\delta}{\alpha}}+2\|t^{\frac{1-\delta}{\alpha}}Du(t,\cdot) \|_\infty^2\big),$$ $\widetilde L:= G^{\frac{r-1}{2}}L$, and $K:=\max\big\{G^{\frac{r}{2}}M, H_0 T^{\frac{r(1-\delta)}{\alpha}}+LG^{\frac{r}{2}} \big\}$.
By 
\ref{assump:B2_new}, \ref{assump:B3}, and the bound $\|Du(t,\cdot)\|_\infty=O(t^{-\frac{1-\delta}\alpha})$ from Theorem \ref{thm::case_b_hjb_sol_existence} (for $t$  small) and the assumption $u\in C((0,T],C^1_b(\R^d))$ (for $t$ not small),
\begin{align*}
    &|g_{t-s}(x+h)-g_{t-s}(x)|
        \leq  c_0(t-s)^{-\frac{1}{\alpha}}\Big(\widetilde L s^{-\frac{(r-1)(1-\delta)}{\alpha}} \sup_{x\in\R^d}|Du(s,x+h)-Du(s,x)| + K s^{-\frac{r(1-\delta)}{\alpha}} |h|^\beta  \Big).
\end{align*}
 Integrating in time, with  $B:=B(1-\tfrac{r(1-\delta)}{\alpha},1-\tfrac1\alpha)$ (see Remark \ref{rem:w-thm} (c)),  yields
\begin{align}
    \frac{1}{|h|^\beta}\int_0^t &|g_{t-s}(x+h)-g_{t-s}(x)| \; ds \leq c_0 KB t^{1-\frac{1}{\alpha}-\frac{r(1-\delta)}{\alpha}} \label{eq:gbeta}\\
    &+ c_0 \widetilde L \int_0^t(t-s)^{-\frac{1}{\alpha}}s^{-\frac{(r-1)(1-\delta)}{\alpha}} \sup_{x\in \R^d}\frac{|Du(s,x+h)-Du(s,x)|}{|h|^\beta} \, ds.\nonumber
\end{align}
 Since $u$ is a mild solution of \eqref{eq:hjb}, it follows from estimates \eqref{eq:Du0beta} and \eqref{eq:gbeta} that 
\begin{align*}
&w_h(t)  :=\sup_{x \in \R^d} \frac{|Du(t,x+h)-Du(t,x)|}{|h|^\beta}  \\
& \leq t^{-\frac{1+\beta-\delta}{\alpha}} \big(
2^{1-\beta} c_{2,\delta,d} [u_0]_\delta
+ c_0BK T^{1+\frac{\beta-\delta}{\alpha}-\frac{r(1-\delta)}{\alpha}} \big) 
+c_0\widetilde L\int_0^t(t-s)^{-\frac{1}{\alpha}}s^{-\frac{(r-1)(1-\delta)}{\alpha}}  w_h(s) \, ds.
 \end{align*}
Then since \ref{assump:B1} and
$\delta\geq \beta$ hold, it follows from the Grönwall-type inequality in Lemma \ref{lem:generalized_gronwall_2} with $\overline\alpha=1-\frac{1+\beta-\delta}{\alpha}$, $\overline\beta=1-\frac1\alpha$, $\overline\gamma=1-\frac{(r-1)(1-\delta)}{\alpha}$  
that
\begin{align}\label{eq:H_regularity_gronwall_calc_case_B}
    w_h(t) \leq \overline{C} \, t^{-\frac{1+\beta-\delta}{\alpha}}, 
\end{align}
for some constant $\overline{C}>0$.

Multiplying \eqref{eq:H_regularity_gronwall_calc_case_B} by $t^{\frac{1+\beta-\delta}{\alpha}}$ and taking the supremum over $|h|$, we see that $\sup_{t \in (0,T]}[t^{\frac{1+\beta-\delta}{\alpha}}Du(t,\cdot)]_{\beta}<\infty$.
 The proof is complete by noting that by \ref{assump:B2_new}, \ref{assump:B3}, and the bound $\|Du(t,\cdot)\|_\infty=O(t^{-\frac{1-\delta}\alpha})$ from Theorem \ref{thm::case_b_hjb_sol_existence} (for $t$  small) and the assumption $u\in C((0,T],C^1_b(\R^d))$ (for $t$ not small), 
\begin{align}\label{eq:H_Du_triangle_ineq}
\begin{aligned}
    |H(t,x,&Du(t,x))-H(t,x',Du(t,x'))| \\
    &\leq Kt^{-\frac{r(1-\delta)}{\alpha}}|x-x'|^\beta+\widetilde Lt^{-\frac{(r-1)(1-\delta)}{\alpha}}|Du(t,x)-Du(t,x')| \\
    &\leq t^{-\frac{r(1-\delta)+\beta}{\alpha}}\Big(KT^{\frac{\beta}{\alpha}}+\widetilde L\sup_{t\in (0,T]}[t^{\frac{1+\beta-\delta}{\alpha}}Du(t,\cdot)]_{\beta}\Big)|x-x'|^\beta.
\end{aligned}
\end{align}

\noindent{\textbf{Case 2:}} $0\leq\delta<\beta$. \ 
Here we use a combination of Gr\"onwall and bootstrapping arguments. \smallskip

\noindent {\bf (i)} \  Initial $C^{1+\beta_0}$ estimate for some small $\beta_0\in(0,\beta]$. We begin by proving that estimates \eqref{eq:Hbetareg} hold when $\beta$ is replaced by $\beta_0$. If $\delta>0$ we take $\beta_0=\delta$, and by Case 1 (with $\beta=\beta_0$) it follows that  
$$\sup_{t\in(0,T]} [t^{\frac{1}{\alpha}}Du(t,\cdot)]_{\beta_0} < \infty.$$ 
If $0=\delta<\beta$, then $r\in[1,\alpha)$ by \ref{assump:B1} and we take $\beta_0\in(0,\alpha-r)$. 
We redo the proof of Case 1 with $\beta=\beta_0$ and $\delta=0$. By our choice of $\beta_0$, we can again apply Gr\"onwall (Lemma \ref{lem:generalized_gronwall_2}) and conclude that $$\sup_{t\in(0,T]} [t^{\frac{1+\beta_0}{\alpha}}Du(t,\cdot)]_{\beta_0} < \infty.$$
%
%
 In either case, since $H(t,\cdot,p)$ is $\gamma$-Hölder regular for any $\gamma\in(0,\beta]$ by \ref{assump:B3}, estimate \eqref{eq:H_Du_triangle_ineq} holds with $\beta_0$ replacing $\beta$, and hence
\begin{align}\label{eq::blow_up_H_bootstrap}
\sup_{t\in(0,T]}[t^{\frac{r(1-\delta)+\beta_0}{\alpha}}H(t,\cdot,Du(t,\cdot))]_{\beta_0} \leq KT^\frac{\beta_0}{\alpha}+\widetilde{L}\sup_{t\in(0,T]} [t^{\frac{1+\beta_0-\delta}{\alpha}}Du(t,\cdot)]_{\beta_0}<\infty.    
\end{align}
We conclude that estimates \eqref{eq:Hbetareg} hold when $\beta$ is replaced by $\beta_0$.\smallskip

\noindent {\bf (ii)} \ 
Bootstrapping. In view of (i), we can now redo the proof of Theorem \ref{thm:full_schauder_regularity_case_b} with $\beta_0$ replacing $\beta$ to obtain $C^{\alpha+\beta_0}$ regularity of $u$ and corresponding 
blow-up estimate \eqref{stestim}: For $\beta_1=\alpha+\beta_0-1$,
\begin{align}\label{eq::blow_up_Du_bootstrap}
    \begin{aligned}
    [Du(t, \cdot)]_{\beta_1} &\leq t^{-\frac{1+\beta_1-\delta}{\alpha}} \Big(\sup_{s\in(0,T]}\|s^{\frac{1-\delta}{\alpha}} Du(s,\cdot)\|_{\infty} 2^{1-\beta_1}c_{2,1,d} \\ &\qquad +T^{\frac{\alpha-\delta-r(1-\delta)}{\alpha}} \overline{c}
    _{\frac{\beta_0}{\alpha},\alpha,\beta_0} \sup_{s \in(0,T]}\|s^{\frac{r(1-\delta)+\beta_0}{\alpha}}H(s,\cdot,Du(s,\cdot)) \|_{C^{\beta_0}_b} \Big),
\end{aligned}
\end{align}
where the norms are finite by Lemma \ref{thm::case_b_hjb_sol_existence} and part (i). We can now update \eqref{eq::blow_up_H_bootstrap}, replacing $\beta_0$ by $\beta_1$, and conclude that estimates \eqref{eq:Hbetareg} hold when $\beta$ is replaced by $\beta_1$. Note the gain, $\beta_1-\beta_0=\alpha-1>0$.\smallskip

\noindent {\bf (iii)} \ Iteration. For $k=1,2,\dots,N-1$, we repeat part (ii) replacing $\beta_k$ with $\beta_{k+1}$, where $\beta_{k}=k(\alpha-1)+\beta_0$. The result is $C^{1+\beta_{k+1}}$ regularity and that estimates \eqref{eq:Hbetareg} hold when $\beta$ is replaced by $\beta_{k+1}$.
Let $N$ be smallest integer such that $(N+1)(\alpha-1)+\beta_0>\beta$. Once we reach $k=N-1$, we do a final iteration with $\beta_{N+1} =\beta$ to achieve $C^{1+\beta}$ regularity and conclude the proof of \eqref{eq:Hbetareg}.
\end{proof}

\section{Classical solutions, long-time existence, and optimal regularity in time.}
\label{sec::long_time_existence}

\subsection{Classical solutions of the viscous HJ equation}\label{sub_sec::classical_sol}

We use the regularity results of Section \ref{sec::full_schauder_regularity} to prove that mild solutions of \eqref{eq:hjb} are classical solutions. We first state a result for the linear case.
\begin{lem}\label{lem:heateqn}
    Assume \ref{NDa}
    and $\phi\in L^\infty(\R^d)$. Then $P_t\phi\in C^\infty_b(\R^d)$ for $t>0$ and
    \begin{align*}
        \partial_t P_t \phi(x) = \mathcal{L}P_t\phi(x),\qquad t>0,\ x\in \R^d.
    \end{align*}
\end{lem}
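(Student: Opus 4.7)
The claim has two parts: spatial smoothness of $u(t,\cdot):=P_t\phi$ for each $t>0$, and the heat equation $\partial_t u = \mathcal{L} u$. For smoothness, the plan is to differentiate $P_t\phi(x) = \int p_t(x-y)\phi(y)\, dy$ under the integral sign. Since $p_t\in C_b^\infty(\R^d)$ by \ref{NDa} and the estimate \eqref{NDam} from Remark \ref{rem:NDam} gives $\|D^m p_t\|_{L^1} \le (m^{1/\alpha} c_0)^m t^{-m/\alpha}<\infty$ for every $m\in\N$, the dominated convergence theorem justifies differentiating under the integral to any order. This yields $D^m P_t\phi = (D^m p_t)*\phi$ with $\|D^m P_t \phi\|_\infty \le \|D^m p_t\|_{L^1}\|\phi\|_\infty <\infty$, so $P_t\phi \in C_b^\infty(\R^d)$.

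For the time derivative, I would use the semigroup property together with the generator characterization. Fix $t_0>0$ and set $\psi:=P_{t_0/2}\phi$. By the first step $\psi\in C_b^\infty(\R^d)$, so in particular $\psi\in C_b^2(\R^d)$ and $\mathcal{L}\psi$ is well-defined pointwise. By the semigroup property, for $h>0$ small,
\begin{equation*}
\frac{P_{t_0+h}\phi(x)-P_{t_0}\phi(x)}{h} = \frac{P_h(P_{t_0}\phi)(x)-P_{t_0}\phi(x)}{h}.
\end{equation*}
The paper's excerpt records that $\mathcal{L}$ coincides with the generator of $P_t$ on $C_c^2$. The standard Taylor expansion and the L\'evy measure condition $\int (1\wedge |z|^2)\, d\mu(z)<\infty$ extend this to $C_b^2$: for $\varphi\in C_b^2$ one can split the nonlocal integrand over $\{|z|<1\}$ and $\{|z|\ge 1\}$, dominate by $|z|^2 \|D^2\varphi\|_\infty$ and $2\|\varphi\|_\infty$ respectively, and pass to the limit to obtain $h^{-1}(P_h\varphi-\varphi)\to\mathcal{L}\varphi$ uniformly on $\R^d$. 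Applied to $\varphi = P_{t_0}\phi\in C_b^2$, this yields the right-derivative $\partial_t^+ P_t\phi(x)\big|_{t=t_0} = \mathcal{L} P_{t_0}\phi(x)$.

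To upgrade to a two-sided derivative and to show equality of both sides for every $t>0$, I would verify that $t\mapsto \mathcal{L} P_t\phi(x)$ is continuous on $(0,T]$: the second-order spatial derivatives $D^2 P_t\phi$ and the values $P_t\phi(x+z)-P_t\phi(x)$ depend continuously on $t$ (e.g.\ via the semigroup property $P_t\phi = P_{t-s}P_s\phi$ and the smoothness estimates above), and the L\'evy integral converges by the same dominated convergence argument used in the preceding paragraph. A continuous right-derivative implies classical differentiability, which gives $\partial_t P_t\phi = \mathcal{L} P_t\phi$ on $(0,\infty)\times\R^d$.

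The main obstacle is the extension of the generator property from $C_c^2$ (as cited) to $C_b^2$, since $P_t\phi$ is smooth but not compactly supported. This is handled by the dominated convergence argument sketched above using the L\'evy measure integrability condition, but care must be taken to justify uniform domination both in the small-$|z|$ regime (via the second derivative bound) and in the large-$|z|$ regime (via boundedness of $P_t\phi$).
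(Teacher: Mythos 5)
Your overall architecture matches the paper's: spatial smoothness of $P_t\phi$ by differentiating the convolution (the paper bootstraps via $DP_t\phi = P_{t/2}DP_{t/2}\phi$, you differentiate under the integral directly; both work), then the semigroup property to reduce the time derivative to the action of the generator on the smooth function $P_{t}\phi\in C^2_b(\R^d)$, and finally an upgrade from a right derivative to a two-sided one. The gap lies in the step you yourself flag as the main obstacle: extending the identification of the generator with $\mathcal{L}$ from $C^2_c$ to $C^2_b$. Your proposed fix --- split the integrand over $\{|z|<1\}$ and $\{|z|\ge 1\}$, dominate by $|z|^2\|D^2\varphi\|_\infty$ and $2\|\varphi\|_\infty$, and ``pass to the limit'' --- does not prove what you need. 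That domination argument only shows that $\mathcal{L}\varphi$ is a well-defined bounded function for $\varphi\in C^2_b$; it says nothing about the difference quotient $h^{-1}(P_h\varphi-\varphi)(x)$, which is an integral of $\varphi(x-y)-\varphi(x)$ against the measure $h^{-1}p_h(dy)$, \emph{not} against $\mu$. To conclude that the quotient converges to $\mathcal{L}\varphi(x)$ you must relate $h^{-1}p_h$ to $\mu$ as $h\to 0$ (vague convergence away from the origin, plus control of the truncated first and second moments of $p_h$ near the origin) --- which is precisely the content of the generator theorem you are trying to extend, so the argument as written is incomplete.

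The paper closes this gap by localization: with a smooth cutoff $\tau$ equal to $1$ near $x$, it writes $P_t\phi = (P_t\phi)\tau + (P_t\phi)(1-\tau)$; the generator acts as $\mathcal{L}$ on the compactly supported $C^2$ piece by \cite[Theorem~31.5]{MR3185174}, and on the piece vanishing near $x$ by \cite[Corollary~8.9]{MR3185174} (which supplies exactly the vague convergence of $h^{-1}p_h$ to $\mu$ away from the origin) together with the definition of the generator. You should either carry out such a localization or cite a result identifying the generator pointwise on $C^2_b$. Your final step (a continuous right derivative implies classical differentiability) is fine and is a slightly more hands-on substitute for the paper's appeal to abstract $C_b$-semigroup theory.
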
 
\begin{proof}
First note that since $P_t\phi$ is an $L^1-L^\infty$ convolution, it is uniformly continuous. By \ref{NDa} and the dominated convergence theorem we further get that $P_t\phi\in C^1_b(\R^d)$ and $DP_{t}\phi(x) = P_{t/2}DP_{t/2}\phi(x)$. By a bootstrap argument we get that $P_t\phi\in C^{\infty}_b(\R^d)$. By \cite[Example~4.8.21]{MR1873235}, the convolution operators $P_t$ form a $C_b$-semigroup and by \cite[Example~4.8.26]{MR1873235}, $C^2_b(\R^d)$ belongs to the domain of the $C_b$-generator of $P_t$, which we denote by $A$ (see \cite[Definition~4.8.14]{MR1873235}). Therefore, by \cite[Remark~4.8.15]{MR1873235} we have
    \begin{align*}
        \partial_t P_t\phi = \partial_t P_{t/2}P_{t/2}\phi = AP_{t/2}P_{t/2}\phi = AP_t \phi.
    \end{align*}
    It remains to prove that for $C^2_b(\R^d)$ functions the operator $A$ agrees pointwise with $\mathcal{L}$. To this end, let $\tau$ be a smooth cut-off function vanishing outside $B(x,1)$. Then,
    \begin{align*}
        AP_t\phi(x) = A[(P_t\phi)\tau](x) + A[(P_t\phi)(1-\tau)](x) = \mathcal{L}[(P_t\phi)\tau](x) + \mathcal{L}[(P_t\phi)(1-\tau)](x),
    \end{align*}
    where the last equality follows from \cite[Theorem~31.5]{MR3185174} for the first term and from \cite[Corollary~8.9]{MR3185174} and the definition of $A$ for the second term.
\end{proof}

When the maximal regularity of the mild solution is $\alpha+\beta < 2$, we naturally need 
to restrict the maximal order of the operator $\mL$ to be less than $\alpha+\beta$ for our problems to have classical solutions. We give such a condition in terms of the Levy triplet:\medskip
\begin{enumerate}
\myitem{$\mathbf{(L2)}$}\label{assump:L2} $\mL$ is of the form \eqref{eq:L} with $A=0$ and
 $\int_{\R^d} (1\wedge |z|^\sigma)\, d\mu(z)< \infty$ for some $\sigma\in (\alpha,\alpha+\beta)$. 
\end{enumerate}\smallskip
\begin{rem}\label{rem:L}  (a) Assumption \ref{NDa} implies that the order of $\mL$ is at least $\alpha$, while Assumption \ref{assump:L2} implies that $\mL$ does not contain any terms of order $\geq \sigma(<\alpha+\beta)$. E.g. if $\beta=\frac12$, then $\mL=-(-\partial^2_{x_1})^{\frac14}-(-\Delta_{\R^d})^{\frac{1.1}2}-(-\partial^2_{x_2})^{\frac34}$ satisfies \ref{NDa} with $\alpha=1.1$ and \ref{assump:L2} for any $\sigma\in(\frac32,1.1+\frac12)$.\smallskip

\noindent (b) 
Let $\mL u = \Div(ADu) -(-\Delta)^{\frac{\tilde\alpha}2}u$ with  degenerate nontrivial $A$ and $\tilde\alpha\in (1,2)$. Then \ref{NDa} holds with $\alpha=\tilde\alpha<2$, but because of the second derivatives, $\alpha+\beta>2$ is needed for solutions of \eqref{eq:hjb} to be $C^2$ and classical. We use \ref{assump:L2} to exclude this case.
\smallskip

\noindent (c) Any L\'evy operator $\mL$ satisfies $\int_{\R^d}(1\wedge |z|^2)\, d\mu(x)<\infty$ and hence $\|\mL u\|_\infty \leq C\|u\|_{C^2_b(\R^d)}$. Assumption \ref{assump:L2} implies $\|\mL u\|_\infty \leq C\|u\|_{C^{\sigma+}_b(\R^d)}$ for $\sigma<2$.
\end{rem}
\begin{thm}\label{thm:u_is_classical_sol}
Assume the assumptions of Theorem \ref{thm::full_schauder_regularity} hold for $\alpha\in (1,2]$, $\beta \in (0, 1]$, $\alpha + \beta \notin \mathbb{N}$, and 
either $\alpha+\beta > 2$ or \ref{assump:L2} holds.
Then the mild solution $u$ of \eqref{eq:hjb} is a classical solution, meaning that it satisfies \eqref{eq:hjb} 
pointwise and $u(t,x)\to u_0(x)$ as $t\to 0$ for $x\in \R^d$.   
\end{thm}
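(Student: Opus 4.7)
The plan is to write $u = P_t u_0 + w$ with $w(t,x) := \int_0^t P_{t-s}[f(s,\cdot)](x)\,ds$ and $f(s,x) := H(s,x,Du(s,x))$, and then handle initial condition and PDE separately. The initial condition is the easy half: $P_t u_0(x) \to u_0(x)$ from the weak convergence $p_t \rightharpoonup \delta_0$ and continuity of the bounded $u_0$, while $\|f\|_\infty \leq L_{R_1}R_1 + H_0$ by \ref{assump:H2} (since $u$ has a uniform $C^1$-bound from Theorem \ref{thm::hjb_sol_existence}), giving $|w(t,x)| \leq t\|f\|_\infty \to 0$. Hence $u(t,x) \to u_0(x)$ pointwise.

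For the equation, I would first verify that $\mathcal{L}u(t,x)$ is pointwise well-defined using $u(t,\cdot) \in C_b^{\alpha+\beta}$ from Theorem \ref{thm::full_schauder_regularity}. If $\alpha+\beta > 2$, then $u(t,\cdot) \in C_b^2$ and the second-order Taylor remainder yields the $\mu$-integrable majorant $C(1\wedge|z|^2)$; if $\alpha+\beta<2$, then assumption \ref{assump:L2} furnishes $\sigma\in(\alpha,\alpha+\beta)$ with $A=0$ and $\int(1\wedge|z|^\sigma)\,d\mu<\infty$, and the integrand is bounded by $C(|z|^{\alpha+\beta}\wedge|z|) \leq C(|z|^\sigma\wedge|z|)$, again $\mu$-integrable. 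The linear piece satisfies $\partial_t P_tu_0 = \mathcal{L}P_tu_0$ by Lemma \ref{lem:heateqn} directly. For the Duhamel piece, Lemma \ref{lem::duhamel_map_time_translation} with $g_0\equiv 0$, $g=f$ gives for small $h>0$
\begin{equation*}
\frac{w(t+h,x)-w(t,x)}{h} = \frac{P_h[w(t,\cdot)](x)-w(t,x)}{h} + \frac{1}{h}\int_t^{t+h} P_{t+h-s}[f(s,\cdot)](x)\,ds.
\end{equation*}
As $h\to 0^+$, the first quotient converges to $\mathcal{L}w(t,x)$ (see obstacle below) and the second to $f(t,x)$ by continuity of $s\mapsto Du(s,\cdot)$ in $C_b$ (from $u\in C_b((0,T];C_b^1)$) combined with continuity of $H$. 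A symmetric identity for $h<0$ produces the matching left derivative, so $\partial_tw(t,x) = \mathcal{L}w(t,x) + H(t,x,Du(t,x))$; summing with the linear part yields the PDE.

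The principal obstacle is the generator identification $h^{-1}\bigl(P_h\phi(x)-\phi(x)\bigr)\to\mathcal{L}\phi(x)$ for $\phi := w(t,\cdot)$. Writing $P_h\phi-\phi = \int_0^h \partial_s P_s\phi\,ds = \int_0^h \mathcal{L}P_s\phi\,ds$ by Lemma \ref{lem:heateqn}, this reduces to continuity of $s\mapsto\mathcal{L}P_s\phi(x)$ at $s=0^+$. The regularity $\phi\in C_b^{\alpha+\beta}$ together with either $\alpha+\beta>2$ or \ref{assump:L2} supplies a $\mu$-integrable dominant for the integrand of $\mathcal{L}P_s\phi(x)$ uniform in small $s$ (as in the well-definedness check, since $P_s$ preserves $C_b^{\alpha+\beta}$ bounds), and $P_s\phi\to\phi$ in $C^k_b$ on compacts for any integer $k<\alpha+\beta$ (by Hölder regularity of $\phi$ and concentration of $p_s$). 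Dominated convergence then closes the argument and assembles initial condition, pointwise equation, and time-continuity into the classical-solution conclusion.
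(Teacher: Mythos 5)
Your argument is correct in substance but takes a genuinely different route from the paper for the hard step, namely differentiating the Duhamel term $w(t,x)=\int_0^t P_{t-s}[f(s,\cdot)](x)\,ds$ in time. The paper truncates the integral at $s=t-\delta$, applies the Leibniz rule to $\Phi_\delta(t,x)=\int_0^{t-\delta}\varphi(t,s,x)\,ds$, and then passes $\delta\to0$ by proving \emph{uniform} convergence of $\partial_t\Phi_\delta$ to $\mathcal{L}\Phi+f$; the hypothesis ``$\alpha+\beta>2$ or \ref{assump:L2}'' enters through the bound $\|\mathcal{L}\varphi(t,s,\cdot)\|_\infty\lesssim \|\varphi(t,s,\cdot)\|_{C_b^\sigma}\lesssim (t-s)^{-(\sigma-\beta)/\alpha}$ with $\sigma<\alpha+\beta$, which makes the tail $\int_{t-\delta}^t$ integrable. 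You instead use the translation identity of Lemma \ref{lem::duhamel_map_time_translation} to reduce everything to the generator identification $h^{-1}(P_h\phi-\phi)(x)\to\mathcal{L}\phi(x)$ for $\phi=w(t,\cdot)\in C_b^{\alpha+\beta}$, where the same hypothesis supplies the $\mu$-integrable majorant; this avoids differentiation under the integral sign and the singular-kernel estimate, at the price of the fundamental-theorem-of-calculus step $P_h\phi-\phi=\int_0^h\mathcal{L}P_s\phi\,ds$ plus dominated convergence, which you justify adequately. Two spots deserve more care than you give them. First, your ``symmetric identity for $h<0$'' is not actually symmetric: the left difference quotient is $|h|^{-1}(P_{|h|}-I)[w(t-|h|,\cdot)](x)$, in which the operand varies with $h$; you either need a uniformity argument in the operand or, more simply, the standard fact that a continuous function with a continuous right derivative is differentiable (your right derivative $\mathcal{L}w(t,\cdot)(x)+f(t,x)$ is continuous in $t$). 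Second, under \ref{assump:H2} the time-modulus of continuity of $H$ is only locally uniform in $x$, so the convergence $\frac1h\int_t^{t+h}P_{t+h-s}[f(s,\cdot)](x)\,ds\to f(t,x)$ requires splitting off the far field using the concentration of $p_\tau$ (as the paper does around \eqref{eq:f_locally_uniformly_cont}); this is routine but should be said. With these repairs the proof is complete.
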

\begin{rem}
    The result also holds if $u_0\in C^\delta_b(\R^d)$ under the assumptions of Theorem~\ref{thm:full_schauder_regularity_case_b}. Since the solution immediately becomes $\alpha+\beta$-H\"older, the proof is almost identical. The only difference is that to show the convergence to $u_0$ we use the fact that the bound on $\|H(s,\cdot,Du(s,\cdot))\|_\infty$ is integrable in $s$ (in Case (I) it is bounded).
\end{rem}
The following proof mirrors that of \cite[Lemma 5]{MR2121115}.

\begin{proof}
      Consider the Duhamel representation of $u$:
    \begin{align}
    u(t,x) = P_t\big[u_0\big](x)+\int_0^t P_{t-s}\big[H(s,\cdot,Du(s,\cdot))\big](x)\, ds.
    \end{align}
By Theorem~\ref{thm::full_schauder_regularity} $Du$ is uniformly bounded, so $H(s,x,Du(s,x))$ is also uniformly bounded. From this we immediately get that $u(t,x)\to u_0(x)$ as $t\to 0$ for all $x\in \R^d$, so it suffices to verify that $u$ satisfies \eqref{eq:hjb}.

By Lemma~\ref{lem::duhamel_map_time_translation} and 
Theorem \ref{thm::hjb_sol_existence}
we can assume that $u\in C([0,T],C^1_b(\R^d))$. By Lemma~\ref{lem:heateqn},
    \begin{align*}
        \partial_t P_t\big[u_0\big](x) = \mathcal{L}P_t\big[u_0\big](x).
    \end{align*}
To simplify the notation, let $f(t,x)=H(t,x,Du(t,x))$. It remains to show that
\begin{align}\label{eq:classical_sol_remains_to_show}
    \partial_t \int_0^t P_{t-s}\big[f(s,\cdot)\big](x) \, ds = \mathcal{L}\int_0^t P_{t-s}\big[f(s,\cdot)\big](x) \, ds + f(t,x),
\end{align}
which formally follows from the Leibniz integral rule, but requires some work due to the singularity of the heat kernel at time zero. It suffices to show \eqref{eq:classical_sol_remains_to_show} for $t \in [\delta_0, T]$, with $\delta_0>0$ fixed. Let $\delta \in (0,\delta_0/2)$, 
\begin{align*}
 &\varphi(t,s,x) = P_{t-s}\big[f(s,\cdot)\big](x), \qquad  (t,s,x) \in [\delta_0, T]\times(0,T)\times \R^d, \ s< t, \\
&\Phi_\delta(t,x)= \int_0^{t-\delta} \varphi(t,s,x) \, ds, \quad \Phi(t,x)=\int_0^t \varphi(t,s,x) \, ds,  \qquad (t,x)\in [\delta_0, T] \times \R^d.
\end{align*}
We will first show that $\partial_t \Phi_\delta = \mathcal{L}\Phi_\delta$. Then, we prove that $\Phi_\delta(\cdot,x)$ converges uniformly to $\Phi(\cdot,x) + \varphi(\cdot,\cdot,x)$ on $[\delta_0,T]$ and that on the same interval $\partial_t\Phi_\delta(\cdot,x)$ converge uniformly to $\mathcal{L} \Phi(\cdot,x) + f(\cdot,x)$. This implies that $\partial_t\Phi$ exists and is equal to $\mathcal{L} \Phi + f$.

Let $t_1,t_2\in(0,T]$, $R>0$. By continuity of $H$ in \ref{assump:H2} there is a modulus of continuity $\omega_R$ such that,
\begin{align*}
    |H(t_1,x,p) - H(t_2,x,p)| \leq \omega_R(t_1-t_2),\qquad x,p\in B(0,R),
\end{align*}
and 
since $u\in C([0,T],C^1_b(\R^d))$  by assumption, there is a modulus of continuity $\Tilde{\omega}$ such that $|Du(t_1,y)-Du(t_2,y)|\leq \Tilde{\omega}(t_2-t_1)$ for all $y\in \R^d$. Take $R_1=\max_{t\in[\delta_0,T]}\|Du(\cdot,t)\|_{L^\infty(\R^d)}$, then for all $x \in B(0,R)$,
\begin{equation}\label{eq:f_locally_uniformly_cont}
\begin{aligned}
    |f(t_2,x)-f(t_1,x)| &\leq |H(t_2,x,Du(t_2,x))-H(t_1,x,Du(t_2,x))| \\
    &\qquad \qquad +|H(t_1,x,Du(t_2,x))-H(t_1,x,Du(t_1,x))| \\
    &\leq  \omega_{R}(t_2-t_1)+L_{R_1}\Tilde{\omega}(t_2-t_1) =: \overline{\omega}_R(t_2-t_1).
\end{aligned}
\end{equation}

\noindent \textbf{Step 1: The result holds for $\Phi_\delta$.} We claim that
\begin{align}\label{eq:classical_sol_calc_leibniz}
    \partial_t \Phi_\delta(t,x) &= \varphi(t,t-\delta,x)+\int_0^{t-\delta}\partial_t\varphi(t,s,x) \, ds =\varphi(t,t-\delta,x) + \mathcal{L}\Phi_\delta(t,x).
\end{align}
The first equality is justified as follows: By splitting integrals and using the mean value theorem for integrals we can write, for some $\lambda \in (0,1)$ depending on $\tau$,
\begin{align*}
    \frac{\Phi_\delta(t+\tau,x)-\Phi_\delta(t,x)}{\tau} 
    = \varphi(t+\tau, t-\delta +\lambda\tau, x) + \int_0^{t-\delta} \frac{\varphi(t+\tau,s,x)-\varphi(t,s,x)}{\tau} \, ds.
\end{align*}
Here $\varphi(t+\tau, t+\lambda\tau-\delta, x)\to\varphi(t, t-\delta, x)$ as $\tau\to 0$: For any $\epsilon>0$, we can take $R$ large so that $\underset{s\in [0,\delta]}{\sup}\int_{B(0,R)^c}p_s(y)\, dy < \epsilon$ \cite[(3.2)]{pruitt_growth_random_walks}, and then
\begin{align*}
    &|\varphi(t+\tau, t-\delta+\lambda\tau, x) - \varphi(t,t-\delta,x)|\\ &= \int_{\R^d} 
    p_{(1-\lambda)\tau+\delta}
    (x-y) |f(t-\delta+\lambda\tau,y) - f(t-\delta,y)|\, dy +\, |(P_\delta - P_{(1-\lambda)\tau+\delta}
    ) f(t-\delta)( x)|\\
    &\leq\, 2\epsilon \|f\|_{\infty} + \overline{\omega}_R(\lambda \tau) + |(P_{ (1-\lambda) \tau} - I)P_{\delta}f(t-\delta)( x)|\to 2\epsilon \|f\|_{\infty}\qquad\text{as}\qquad\tau\to 0.
\end{align*}


By Lemma \ref{lem:H_regularity}, $\sup_{t\in(0,T]}\|f(t,\cdot)\|_\infty < \infty$, so (in particular) $\varphi(t,s,\cdot) \in C^2_b(\R^d)$ for all $0<s<t$. For $s\in (0,t-\delta)$, Lemma \ref{lem:heateqn}
and Theorem \ref{lem::lem1} (with $\beta=0$) then yield
\begin{align}\label{eq::varphi_bounded_calc}
    |\partial_t \varphi(t,s,x)| &= |\mathcal{L}\varphi(t,s,x)| \lesssim \|\varphi(t,s,\cdot)\|_{C^2_b} \lesssim (t-s)^{-\frac{2}{\alpha}} \leq \delta^{-\frac{2}{\alpha}} < \infty.
\end{align}
Therefore, by the dominated convergence theorem, 
\begin{align*}
    \lim_{\tau \rightarrow 0} \int_0^{t-\delta} \frac{\varphi(t+\tau,s,x)-\varphi(t,s,x)}{\tau} \, ds = \int_0^{t-\delta} \partial_t \varphi(t,s,x) \, ds.
\end{align*}
The second equality of \eqref{eq:classical_sol_calc_leibniz} then follows by the arguments related to \eqref{eq::varphi_bounded_calc} and Fubini's theorem:
\begin{align*}
    \int_0^{t-\delta} \partial_t \varphi(t,s,x) \, ds=  \int_0^{t-\delta} \mathcal{L}\varphi(t,s,x) \, ds =  \mathcal{L}\Phi_\delta(t,x).
\end{align*}
\noindent \textbf{Step 2: $\partial_t \Phi_\delta$ converges uniformly as $\delta \rightarrow 0$.} In the rest of the proof, let $\sigma=2$ if $\alpha+\beta > 2$, and $\sigma\in(\alpha,\alpha+\beta)$ be given by \ref{assump:L2} if $\alpha+\beta<2$.
Recall that $\Phi(t,\cdot)\in C^{\alpha+\beta}_b(\R^d)$ by Theorem \ref{thm::full_schauder_regularity}, so $\mL\Phi(t,x)$ is well-defined. Furthermore, Theorems \ref{lem::lem1} and \ref{lem:H_regularity} (and \ref{assump:L2} if $\alpha+\beta<2$) yield
\begin{align*}
    &\int_0^t\int_{\R^d}|\varphi(t,s,x+z) - \varphi(t,s,x) - D\varphi(t,s,x)\cdot z\textbf{1}_{B(0,1)}(z)|\, d\mu(z)\, ds \\
    &\leq \int_0^t \|\varphi(t,s,\cdot)\|_{C^{\sigma}_b} \int_{\R^d} (1\wedge|z|^\sigma)\, d\mu(z)\, ds\leq C\int_0^t (t-s)^{-\frac{\sigma-\beta}{\alpha}}s^{-\frac{\beta}{\alpha}}\, ds < \infty.
\end{align*}
By this, Fubini's theorem, and the dominated convergence theorem (if $\mL$ has a second order part),
$\mathcal{L}\Phi(t,x) = \int_0^{t} \mathcal{L}\varphi(t,s,x) \, ds$.

Consider now the right-hand side of \eqref{eq:classical_sol_calc_leibniz}. Let
$$M_{\delta_0} := \sup_{s\in (t-\delta,t)} \|f(s, \cdot) \|_{C_b^{\beta}} \leq \sup_{s\in (\delta_0/2,T]} \|f(s, \cdot) \|_{C_b^{\beta}} < \infty.$$ 
Similar computations as above, using Theorem \ref{lem::lem1}, Lemma
\ref{lem:H_regularity}, (and \ref{assump:L2} if $\alpha+\beta<2$), show that
\begin{align*}
    |\mathcal{L}\Phi(t,x)-\mathcal{L}\Phi_\delta(t,x)|
    &\leq  \int_{t-\delta}^t |\mathcal{L}\varphi(t,s,x)| \, ds \lesssim \int_{t-\delta}^t \| \varphi(t,s,\cdot) \|_{C_b^{
    \sigma}} \\
    & \lesssim M_{\delta_0}\int_{t-\delta}^t (t-s)^{-\frac{
    \sigma-\beta}{\alpha}} \, ds \simeq M_{\delta_0} \delta^{\frac{\alpha+\beta-
    \sigma}{\alpha}} \xrightarrow{\delta \rightarrow 0} 0.
\end{align*}
By adding and subtracting terms, we see that
\begin{align*}
    \varphi(t,t-\delta,x) =\int_{\R^d}p(\delta,x-y)(f(t-\delta,y)-f(t,y) ) \, dy  + \int_{\R^d}p(\delta,x-y)f(t,y) \, dy.
\end{align*}
Arguing as in \eqref{eq:A_k_calculation} and using the $\beta$-Hölder continuity of $f$ from Lemma \ref{lem:H_regularity}, 
\begin{align*}
    \Big|f(t,&x)-\int_{\R^d}p_\delta(x-y)f(t,y) \, dy \Big| 
    \xrightarrow{\delta \rightarrow 0} 0,
\end{align*}
and by \eqref{eq:f_locally_uniformly_cont},
\begin{align*}
   \Big| \int_{\R^d} &p_\delta(x-y)(f(t-\delta,y)-f(t,y) ) \, dy \Big| \\ 
   &\leq 2\|f \|_\infty \int_{\R^d \setminus B(x,1)} p_\delta(x-y) \, dy + \overline{\omega}_1(\delta)\int_{B(x,1)} p_\delta(x-y) \, dy \xrightarrow{\delta \rightarrow 0} 0,
\end{align*}
where the first term tends to zero as $\delta \rightarrow 0$ by \cite[(3.2)]{pruitt_growth_random_walks}.

Consequently, $\partial_t\Phi_\delta$ converges uniformly to $H(t,x,Du(t,x)) + \mathcal{L}\Phi(t,x)$. This, together with the fact that $\Phi_\delta$ converges uniformly to $\Phi$,
\begin{align*}
    |\Phi(t,x)-\Phi_\delta(t,x)| \leq M_{\delta_0}\int_{t-\delta}^t \|p(t-s)\|_{L^1} \, ds = M_{\delta_0} \delta \xrightarrow{\delta \rightarrow 0} 0,
\end{align*}
implies 
that $\partial_t \Phi$ exists and equals the limit as $\delta \rightarrow 0$ of $\partial_t \Phi_\delta$, i.e. \eqref{eq:classical_sol_remains_to_show} holds.
\end{proof}

\subsection{Long-time existence for the viscous HJ equation}
Under additional assumptions on the Hamiltonian $H$, global Lipschitz bounds on $u$ have been shown in the literature. In such settings, we now prove long-time existence for mild and smooth solutions of \eqref{eq:hjb}. 
The gradient estimates are usually obtained in the context of viscosity solutions, see e.g. \cite[Section~1.1]{MR2121115} or \cite{MR2129093}, but they apply also to smooth solutions since smooth solutions are viscosity solutions and viscosity solutions are unique.
\begin{thm}\label{thm:long_time_existence}
    Assume $T>0$, \ref{assump:H2}, \ref{assump:H1}, \ref{NDa} with $\alpha\in (1,2]$,
    and that one of the following hold:

\begin{enumerate}[label=(\roman*)]
    \item There exists an $L>0$ such that 
    \begin{align*}
    |H(t,x,p)-H(t,y,p)|\leq L(1+|p|)|x-y|, \quad x,y,p\in\R^d, \ t\in (0,T].
\end{align*}
\item There exists $m>1$, $\overline{K}, b_m, L_H>0$ and a modulus of continuity $\zeta$ such that for all $\mu \in (0,1)$, $x,y, p, q \in \R^d$, $|q|\leq 1$ and $t\in(0,T]$ the following hold:
\begin{align*}
    &\mu H(t,x,\mu^{-1}p)-H(t,x,p) \geq (1-\mu)(b_m|p|^m-\overline{K}), \\
    &H(t,y,p+q)   -H(t,x,p) \leq L_H|x-y|(1+|p|^m)+\zeta(|q|)(1+|p|^{m-1}).
\end{align*}
\end{enumerate}

\noindent Then there is a unique bounded viscosity solution $u$ of \eqref{eq:hjb} on $(0,T]\times \R^d$ and a constant $M_T>0$ such that 
$\|Du(t,\cdot) \|_\infty \leq M_T$. Furthermore, $u$ is a mild and classical solution on $(0,T]$ and $u(t,\cdot) \in C_b^{\alpha+1}(\R^d)$ if $\alpha\in (1,2)$, while $u(t,\cdot) \in C_b^{\alpha+1-\epsilon}(\R^d)$ for arbitrarily small $\epsilon$ if $\alpha=2$. 
\end{thm}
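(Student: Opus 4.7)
The plan is to combine viscosity solution theory, which under \textit{(i)} or \textit{(ii)} furnishes a globally Lipschitz viscosity solution on $[0,T]$, with the local mild/classical theory from Sections \ref{sec::short_time_existence}--\ref{sub_sec::classical_sol}; the two are identified by uniqueness of viscosity solutions, and extended from local to global in time by iterating Theorem \ref{thm::hjb_sol_existence} with a uniform step size, gluing pieces via Lemma \ref{lem::duhamel_map_time_translation}.

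First, I would invoke the comparison principle and Perron method from \cite{MR2129093} (combined with the Lipschitz estimates from \cite{MR2121115,MR4044675,MR4350574}) to obtain a unique bounded viscosity solution $u$ of \eqref{eq:hjb} with $\|Du(t,\cdot)\|_\infty \leq M_T$. Under \textit{(i)} the bound follows from the usual doubling-of-variables argument exploiting the linear $(1+|p|)$-growth in the Lipschitz condition; under \textit{(ii)} the same bound comes from the nonlocal Ishii--Lions technique, where the coercivity term $b_m|p|^m$ compensates for the polynomial $x$-dependence $L_H|x-y|(1+|p|^m)$. A standard comparison argument also gives a uniform $L^\infty$ bound $\|u(t,\cdot)\|_\infty \leq \|u_0\|_\infty + CT$.

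Next, by Theorem \ref{thm::hjb_sol_existence} there is a mild solution $v\in C_b((0,T_0];C^1_b(\R^d))$ with $T_0$ depending only on $R_1=\|u_0\|_{C^1_b}$. Both \textit{(i)} and \textit{(ii)} imply \ref{assump:H} with $\beta=1$ (taking $q=0$ in the second inequality of \textit{(ii)} yields $M_R=L_H(1+R^m)$). For $\alpha\in(1,2)$ we have $\alpha+1\in(2,3)\notin\mathbb{N}$, so Theorems \ref{thm::full_schauder_regularity} and \ref{thm:u_is_classical_sol} directly give $v(t,\cdot)\in C^{\alpha+1}_b(\R^d)$ and $v$ classical on $(0,T_0]$. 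For $\alpha=2$ I would instead use $\beta=1-\epsilon$ in \ref{assump:H} (every Lipschitz function trivially satisfies a $(1-\epsilon)$-H\"older estimate on $\{|x-x'|<1\}$), so that $\alpha+\beta=3-\epsilon\notin\mathbb{N}$ and the same theorems yield $v(t,\cdot)\in C^{3-\epsilon}_b(\R^d)$ and $v$ classical. Since classical solutions are viscosity solutions, uniqueness from Step 1 forces $v\equiv u$ on $(0,T_0]$.

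Finally, the Lipschitz bound $M_T$ and the $L^\infty$ bound on $u$ provide a uniform bound $\overline R$ on $\|u(t_0,\cdot)\|_{C^1_b}$ for every $t_0\in[0,T)$. Applying Theorem \ref{thm::hjb_sol_existence} with initial data $u(t_0,\cdot)$ then extends the mild solution by a fixed increment $\tau=\tau(\overline R)>0$, and Lemma \ref{lem::duhamel_map_time_translation} glues successive pieces, so finitely many iterations reach $T$; uniqueness identifies the resulting global mild solution with $u$. The claimed regularity then follows by applying Theorem \ref{thm::full_schauder_regularity} with the appropriate $\beta$ and Theorem \ref{thm:u_is_classical_sol} on each slice $[\epsilon,T]$. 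The main obstacle is the rigorous justification of the global gradient bound under \textit{(ii)}: the nonlocal Ishii--Lions doubling argument with polynomial growth in $p$ is technically involved and one must ensure that the subcritical diffusion term can be absorbed without assuming extra regularity on $H$ beyond the stated structural conditions; here I would closely follow and adapt the approach of \cite{MR4044675,MR4350574}.
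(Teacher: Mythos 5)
Your proposal follows the same architecture as the paper's proof: obtain a unique, globally Lipschitz-in-$x$ bounded viscosity solution from the viscosity literature, construct mild solutions locally with an existence time that is uniform because it depends only on the global bound $\sup_t\|u(t,\cdot)\|_{C^1_b}$, upgrade them to classical $C^{\alpha+1}_b$ (resp. $C^{\alpha+1-\epsilon}_b$) solutions via Theorems \ref{thm::full_schauder_regularity} and \ref{thm:u_is_classical_sol} using that both (i) and (ii) imply \ref{assump:H} with $\beta=1$, and identify everything through uniqueness of viscosity solutions. Your explicit handling of $\alpha=2$ via $\beta=1-\epsilon$ is exactly what is needed to avoid $\alpha+\beta\in\mathbb{N}$.

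The one place you diverge is the "main obstacle" you flag in case (ii): you propose running a nonlocal Ishii--Lions doubling argument to get the gradient bound. That is not the intended route, and it is doubtful it would close without extra ellipticity assumptions on $\mL$ (the paper only invokes Ishii--Lions in a remark, under \emph{stronger} nondegeneracy hypotheses, citing \cite{MR2911421}). The hypotheses in (ii) — the coercivity inequality $\mu H(t,x,\mu^{-1}p)-H(t,x,p)\geq(1-\mu)(b_m|p|^m-\overline K)$ together with the growth/continuity condition — are precisely the assumptions of the \emph{weak Bernstein method} of Barles, Koike, Ley and Topp \cite{Barles_2017}, whose Corollary 2.5 and Proposition 3.3 deliver well-posedness and the global Lipschitz bound off the shelf; case (i) is similarly covered by \cite[Theorem 5.3]{MR4309434}. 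With that substitution your obstacle disappears and the rest of your argument goes through as written.
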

\begin{rem}
    (a) When $(i)$ holds, we can take $M_T = e^{2L T}\big(\tfrac{1}{2}L + \|Du_0 \|_\infty^2 \big)^{\frac{1}{2}}$ as in \cite[Theorem 5.3]{MR4309434} (with $f=0$, see also \cite{MR2121115}).  When $(ii)$ holds, $M_T$ 
    depends only on $u_0$, $[u_0]_1$, $T$, and $\text{osc}_T(u):=\sup_{t\in (0,T]} \{\sup_{\R^d} u(t,\cdot) - \inf_{\R^d} u(t,\cdot) \}$ by \cite[Proposition 3.3]{Barles_2017}.

    \medskip
    \noindent (b) $H$ is superlinear in case (ii). 
The first inequality 
 is a coercivity condition that enables a weak Bernstein argument to be used
 (see the discussion in   \cite[Assumption (H1)]{Barles_2017}). An example 
 satisfying $(ii)$ is given in 
 \cite[Section 1.1]{GALISE2016194},
\begin{align*}
    H(x,p)=c(x)|p|^m+a(x)|p|^l, \qquad x,p \in \R^d,
\end{align*}
with $c, a$ bounded uniformly continuous in $\R^d$, $c(x)\geq \underline{c}>0$, $m>1$, $1\leq l <m$.

\medskip
\noindent (c) Both $(i)$ and $(ii)$ assume (locally) $x$-Lipschitz $H$, 
and hence \ref{assump:H} holds and mild solutions are smooth (Schauder regular) 
by 
Theorem \ref{thm::full_schauder_regularity} and classical solutions by Theorem \ref{thm:u_is_classical_sol}. 
Using the Ishii--Lions method and strong ellipticity of $\mL$, it is possible to obtain global Lipschitz bounds when $H$ is 
just continuous but satisfy certain growth bounds. We refer to 
\cite{MR2911421} for general results and examples,
 including a setting with
 $H(t,x,Du) = c(x) |Du|^\alpha$, $c\in C_b(\R^d)$, and $\mu(dz) \approx |z|^{-d-\alpha}\, dz$
  \cite[Eq. (25)]
 {MR2911421}.


 \medskip 
    \noindent (d) It is well known that in the special case of Lipschitz-in-$p$ Hamiltonian, 
    Lipschitz bounds on $u$ are easy to obtain. 
    Assumptions like (i) or (ii) are then not needed for long time existence. Indeed, for such $H$ we have
    \begin{align*}
        \|u(t,\cdot)\|_{C^1_b} \lesssim \|u_0\|_{C^1_b} + \int_0^t (t-s)^{-\frac 1\alpha}(H_0 + \|u(s,\cdot)\|_{C^1_b})\, ds.
    \end{align*}
      Then by Gr\"onwall's inequality in Lemma~\ref{lem:generalized_gronwall}, 
$\|u(t,\cdot)\|_{C^1_b}$ is bounded by a constant which depends on $t\in [0,\infty)$, but is locally bounded.  This global-in-time a priori bound 
lets us glue the short-time solutions together (as explained in the proof of Theorem~\ref{thm:long_time_existence} below), meaning that the existence of Lemma \ref{thm::hjb_sol_existence} actually holds on any finite time horizon. 
      A particular example is given by $H(t,x,p)=b(t,x)\cdot p + f(t.x)$ for $f,b\in C_b([0,T]\times \R^d)$.

\end{rem}
\begin{proof} 
\ {\em 1) \ There exists a unique globally Lipschitz viscosity solution $u$ of \eqref{eq:hjb}.} 
    Assume first $(i)$ holds. Existence, uniqueness, and Lipschitz bounds for bounded viscosity solutions are given by \cite[Theorem 5.3]{MR4309434}.  Our assumptions are the same, except the local Lipschitz continuity for $H(t,x,\cdot)$ in \ref{assump:H2} instead of their $C^3$-type condition $(A3)$. But the proofs are not affected, since higher derivatives of $H$ are only needed to prove higher regularity of $u$.
    
    Then we consider case $(ii)$.
    Existence and uniqueness of a bounded viscosity solution and the Lipschitz bound are given by \cite[Corollary 2.5, Proposition 3.3]{Barles_2017} respectively. Note that their (A) and (J) assumptions are trivially satisfied in our setting (corresponding to $j(x,z)=z$ and $A=0$ in \cite{Barles_2017}).

    \bigskip
    \noindent {\em 2) \ The viscosity solution $u$ is a mild and classical solution satisfying Schauder regularity.}
     Let $u$ be the  viscosity solution from step 1,
    $\widetilde{M}_T:=\sup_{t\in [0,T]} \|u(t,\cdot) \|_{C_b^1}<\infty$, and
     $t_0\in [0,T]$. Then equation \eqref{eq:hjb} with initial condition $u(t_0,\cdot)$, has a mild solution $v$ on $(t_0,t_0+\widetilde{T}_0)$ by Theorem~\ref{thm::hjb_sol_existence}, where
    \begin{align*}
        \widetilde{T}_0:= \bigg(\frac{\alpha-1}{2\alpha c_0(L_{\widetilde{R}_1}\widetilde{R}_1+H_0)}\bigg)^{\frac{\alpha}{\alpha-1}} \ \wedge \ \frac{1}{2(L_{\widetilde{R}_1}\widetilde{R}_1+H_0)}\qquad \text{and} \qquad \widetilde{R}_1 = \widetilde{M}_T+1.
    \end{align*} 
    Since both (i) and (ii) imply that \ref{assump:H} holds (with $\beta=1$), the mild solution $v$ is also classical by Theorem~\ref{thm:u_is_classical_sol} and $C^{\alpha+1}_b$ in space by Theorem~\ref{thm::full_schauder_regularity}. Note that the existence time $\widetilde T_0$ is independent of $t_0$.
    Since classical solutions are viscosity solutions and viscosity solutions are unique, $u=v$ on $(t_0,t_0+\widetilde T_0)$. Since $t_0\in[0,T]$ is arbitrary, this means that $u$ is a mild and classical solution satisfying the Schauder regularity for every $t\in(0,T)$. 
\end{proof}

\subsection{Optimal regularity in time and space-time for viscous HJ equations} In this section we consider $H$ with fixed spatial H\"older regularity of order $\beta\in (0,1)$ given by \ref{assump:H}. If $\mL$ is an operator of order $\alpha$, the corresponding (fractional) parabolic scaling is $dt\sim (dx)^\alpha$, and optimal  $C_x^{\alpha+\beta}$ regularity in space should then correspond to optimal $C_t^{\frac{\alpha+\beta}\alpha}$ regularity in time. Overall, the space-time Schauder estimates here state that the viscous HJ equation \eqref{eq:hjb} transforms $C^{\frac{\beta}{\alpha},\beta}_b$ `data' into $C^{\frac{\alpha+\beta}\alpha,\alpha+\beta}_b$ solutions (see \eqref{eq:spacetimenorm} below). When $\alpha=2$ this is consistent with the classical space-time Schauder theory of linear equations \cite{MR1465184,MR0241822}.  To achieve this optimal regularity, we need to enforce that $\mL$ is an order $\alpha$ operator and add (the expected) $C_t^{\frac{\beta}\alpha}$ conditions on the time regularity of $H$:
\bigskip
\begin{enumerate}
\myitem{$\mathbf{(L2')}$}\label{assump:L2'}
$\mL$ is of the form \eqref{eq:L} with $A=0$ and there is $c>0$ such that for $r\in(0,1)$,
\begin{align*}
\begin{cases}\displaystyle
\frac1{r^{\beta}}\int_{B_r} |y|^{\beta+\alpha}\, d\mu(y)+\frac1{r^{1-\alpha}}\int_{B_1\setminus B_r} |y|\, d\mu(y) \leq c , &\text{when} \ \ \alpha+\beta \in (1,2),\ \beta\in(0,1),\\[0.5cm]
\displaystyle
\frac1{r^{2-\alpha}}\int_{B_r} |y|^2\, d\mu(y)+ \frac1{r^{\beta-1}}\int_{B_1\setminus B_r} |y|^{\beta+\alpha-1}\, d\mu(y) \leq c, & \text{when} \ \  \alpha+\beta \in (2,3),\ \beta\in(0,1),
\end{cases}
\end{align*}
where $\alpha\in (1,2]$, $\beta$
are given by \ref{NDa}, \ref{assump:H}.
\bigskip 
\myitem{$\mathbf{(H_t)}$} \label{assump:H_t}
    For each $R>0$ there is $K_R>0$ such that for all $t_1,t_2\in [0,T]$ with $|t_2-t_1|<1$, $x\in \R^d$, $p\in B(0,R)$,
$$|H(t_2,x,p)-H(t_1,x,p)| \leq K_R|t_2-t_1|^{\frac{\beta}{\alpha}},$$
where $\alpha\in (1,2]$, $\beta\in (0,1]$ are given by \ref{NDa}, \ref{assump:H}.
\end{enumerate}\bigskip

 Assumption \ref{assump:L2'} is, like \ref{assump:L2} and \cite[(5)]{MR4309434}, a (more precise) condition 
 on the maximal order of $\LL$.
 Combined with \ref{NDa}, it implies that the order 
 is exactly $\alpha$, see Remark \ref{rem:L} above and Lemma \ref{lem:order} below. 
It is satisfied for a large class of operators, including $\LL=-(-\Delta)^{\frac\alpha2}$ for $\alpha\in(1,2)$ and the ones in Example \ref{ex:NDa} when $\alpha_i=\alpha$. The condition implies several a priori and interpolation estimates:

\begin{lem}\label{lem:order}
Assume \ref{assump:L2'} and $\alpha+\beta \in (1,2)\cup(2,3)$. Then there are $C_1,\dots,C_5>0$ such that for every $x\in\R^d$, $r\in (0,1]$, and $\phi\in C^2_b(\R^d)$,
\begin{align*}
    &\tag{i}\int_{\R^d} \big|\phi(x) - \phi(x+y) - D\phi(x)\cdot y\textnormal{\textbf{1}}_{B_1}(y)\big|\, d\mu(y) \leq C_1\Big( \|D^2 \phi\|_\infty r^{2-\alpha} + \|D\phi\|_\infty r^{1-\alpha} + \|\phi\|_\infty\Big),\hspace{0.4cm}
    \\[0.2cm]
    & \tag{ii}\|\mL\phi\|_\infty \leq C_2\Big(\|\phi\|_\infty +\|\phi\|_{C^1_b}^{2-\alpha}\|\phi\|_{C^2_b}^{\alpha-1}\Big),\\[0.2cm]
    &\tag{iii}\|\mL\phi\|_{C^\beta_b} \leq C_3 \|\phi\|_{C^{\alpha+\beta}_b},
    \\[0.2cm]
    \intertext{and if in addition $\alpha+\beta<2$, then for all $\phi \in C^{\alpha+\beta}_b(\R^d)$}
     &\tag{iv}\int_{\R^d} \big|\phi(x) - \phi(x+y) - D\phi(x)\cdot y\textnormal{\textbf{1}}_{B_1}(y)\big|\, d\mu(y) \leq C_4\Big( [D \phi]_{\alpha+\beta-1} r^{\beta} + \|D\phi\|_\infty r^{1-\alpha} + \|\phi\|_\infty\Big),
    \\[0.2cm]
    & \tag{v}\|\mL\phi\|_\infty \leq C_5\Big(\|\phi\|_\infty +\|\phi\|_{C^1_b}^{\frac{\beta}{\alpha+\beta-1}}\|\phi\|_{C^{\alpha+\beta-1}_b}^{\frac{\alpha-1}{\alpha+\beta-1}}\Big).
\end{align*}
\end{lem}

\begin{rem}\label{rem:weaker}
In view of Lemma \ref{lem:order}, the proof and statement of Lemma \ref{thm:u_is_classical_sol} still hold when \ref{assump:L2} is replaced by \ref{assump:L2'}.
\end{rem}
Bounds like (i), (ii), (iv), (v) were given in \cite[Lemma~2.1]{MR4309434} under assumption \cite[(5)]{MR4309434}.
Estimates like (iii) were proved e.g. in \cite{MR2270163} for $(-\Delta)^{\alpha/2}$.   We give a short proof for completeness.
\begin{proof}
To prove (i) and (iv),  
we split:
\begin{align}\label{eq:Lphisplit}
    |\mL \phi(x)| &\leq \bigg(\int_{B_1^c} + \int_{B_1\setminus B_r} + \int_{B_r}\bigg) \big|\phi(x) - \phi(x+y) - D\phi(x)\cdot y\textbf{1}_{B_1}(y)\big|\, d\mu(y)\\
    &\leq 2\|\phi\|_\infty\mu(B_1^c) + 2\|D\phi\|_\infty \int_{B_1\setminus B_r} |y|\, d\mu(y) + \|D^2\phi\|_\infty \int_{B_r}|y|^2\, d\mu(y).\nonumber
\end{align}
When $\alpha+\beta<2$, we can bound the integral over $B_r$ by $[D\phi]_{\alpha+\beta-1}\int_{B_r}|y|^{\alpha+\beta}\, d\mu(y)$ instead.
Furthermore, for $\alpha+\beta \in (2,3)$, $\int_{B_1\setminus B_r} |y|\, d\mu(y) \leq   r^{2-(\alpha+\beta)}\int_{B_1\setminus B_r} |y|^{\beta+\alpha-1}\, d\mu(y)
$. Estimates (i) and (iv) 
then follow from \ref{assump:L2'}.  
Estimates (ii) and (v) follow from \eqref{eq:Lphisplit}, (i), and (iv) by taking $r = \|\phi\|_{C^1_b}/\|\phi\|_{C^2_b}$ and $r = \|\phi\|_{C^1_b}/\|\phi\|_{C^{\alpha+\beta-1}_b}$ respectively. 
%
%

We now prove (iii). Estimates (ii) and (iv) give the correct $L^\infty$ control over $\mL \phi$, so it remains to  estimate $[\mL \phi]_\beta$.   Let $r = |x-x'| < 1$. 
We estimate the integrand in $\mL \phi(x) - \mL\phi(x')$,
    \begin{align*}
        &|\phi(x+y)-\phi(x) - y\cdot D\phi(x) -  \phi(x'+y)+\phi(x') + y\cdot D\phi(x') | \\
        & \leq \int_0^1 |y||D\phi(x+\lambda y) - D\phi(x) - D\phi(x'+\lambda y) + D\phi(x') |\, d\lambda \\
        &\leq 2|y|\cdot\begin{cases}
            [D\phi]_{\alpha+\beta-1}|x-x'|^{\alpha+\beta-1} \quad &\text{for} \quad y\in B_1\setminus B_r, \\[0.2cm]
            [D\phi]_{\alpha+\beta-1}|y|^{\alpha+\beta-1} \quad &\text{for}\quad y\in B_r, 
        \end{cases}  
        \end{align*}
 where the last inequality holds for $\alpha+\beta \in (1,2)$.  
If $\alpha+\beta\in(2,3)$, we instead use that
\begin{align*}
    & |y||D\phi(x+\lambda y) - D\phi(x) - D\phi(x'+\lambda y) + D\phi(x') |  \\
    &\leq |y|\cdot\begin{cases}
          \int_0^1 |x-x'||D^2\phi(\lambda y+  x+\theta(x'-x)) - D^2\phi(x+\theta(x'-x)) |\, d\theta \quad &\text{for} \quad y\in B_1\setminus B_r,\\[0.2cm]\int_0^1 |y||D^2\phi(x+\theta\lambda y) - D^2\phi(x'+\theta\lambda y)|\, d\theta
          \quad &\text{for} \quad y\in B_r,
    \end{cases} \\
    &\leq |y|\cdot
    \begin{cases}
            [D^2\phi]_{\alpha+\beta-2}|x-x'||y|^{\alpha+\beta-2}\quad &\text{for} \quad y\in B_1\setminus B_r,\\[0.2cm]
            [D^2\phi]_{\alpha+\beta-2}|y||x-x'|^{\alpha+\beta-2}\quad &\text{for} \quad y\in  B_r.
    \end{cases}
\end{align*}
By these estimates, \eqref{eq:Lphisplit}, and \ref{assump:L2'}, when  $\alpha+\beta\in(1,2)$ we have that
\begin{align*}
    |\mL \phi(x) - \mL\phi(x')| &\lesssim \|\phi\|_{C^\beta_b}\mu(B_1^c)|x-x'|^\beta + \|\phi \|_{C^{\alpha+\beta}_b}|x-x'|^{\alpha+\beta-1}\int_{B_1\setminus B_r}|y|\, d\mu(y) \\
    &\quad + \|\phi \|_{C^{\alpha+\beta}_b}\int_{B_r}|y|^{\alpha+\beta}\, d\mu(y) \lesssim \| \phi\|_{C^{\alpha+\beta}_b}|x-x'|^{\beta},
\end{align*}
and when $\alpha+\beta \in(2,3)$ we have that 
\begin{align*}
    &|\mL \phi(x) - \mL\phi(x')| \lesssim \|\phi\|_{C^\beta_b}\mu(B_1^c)|x-x'|^\beta + \|\phi \|_{C^{\alpha+\beta}_b}|x-x'|\int_{B_1\setminus B_r}|y|^{\alpha+\beta-1}\, d\mu(y) \\
    &\qquad\qquad\qquad\qquad\quad + \|\phi \|_{C^{\alpha+\beta}_b}|x-x'|^{\alpha+\beta-2}\int_{B_r}|y|^{2}\, d\mu(y)\\ 
    & \lesssim |\phi\|_{C^\beta_b}|x-x'|^\beta  + \|\phi \|_{C^{\alpha+\beta}_b}|x-x'|\int_{B_1\setminus B_r}|y|^{\alpha+\beta-1}\, d\mu(y)+ \|\phi \|_{C^{\alpha+\beta}_b}|x-x'|^{\alpha+\beta-2}\int_{B_r}|y|^2\, d\mu(y) \\
    &\lesssim \| \phi\|_{C^{\alpha+\beta}_b}|x-x'|^{\beta}.\qedhere
\end{align*}
\end{proof}

The a priori estimates from Lemma \ref{lem:order} and regularity properties of the semigroup $P_t$ given by Theorem~\ref{prop:f_bound} imply that $P_t$ has the optimal H\"older continuity at $t=0$.
\begin{lem}\label{lem:t=0}
Assume \ref{NDa}, \ref{assump:L2'}, and $\alpha+\beta \in (1,2)\cup(2,3)$. Then there is $C>0$ such that
\begin{align*}
\|(P_t - I) \phi\|_\infty \leq C \|\phi\|_{C^\beta_b}t^{\frac \beta \alpha} \qquad {\rm for}\qquad t\in (0,1).
\end{align*}
\end{lem}
\begin{proof}
Take $t\in(0,1)$. Note that by properties of $P_t$ and absolute convergence of all integrals involved,
  \begin{align*}
(P_t - I) \phi(x)&=\int_0^t \partial_t P_s\phi(x)\, ds=\int_0^t \mL P_s\phi(x)\, ds = \mL\int_0^t P_s\phi(x)\, ds = \mL w(t,x),
\end{align*}  
where $w(x,t)=\int_0^t P_{t-r}\phi(x)\,dr$. Assume first $\alpha+\beta>2$. Then by Lemma \ref{lem:order} (ii), the estimate $\|w(t)\|_\infty\leq t\|\phi\|_\infty$, and Theorem~\ref{prop:f_bound} (i), 
\begin{align*}
&\|(P_t - I) \phi\|_\infty \leq C_2 \big(\|w(t)\|_\infty + \|w(t)\|_{C_b^1}^{2-\alpha}\|w(t)\|_{C_b^2}^{\alpha-1}\big)\\
&\leq C_2\big(\|\phi\|_\infty t + \|\phi\|_{C_b^\beta}\,t^{\frac{\alpha+\beta-1}{\alpha}\cdot(2-\alpha)}\,t^{\frac{\alpha+\beta-2}{\alpha}\cdot(\alpha-1)}\big)\leq 2C_2 \|\phi\|_{C_b^\beta} t^{\frac{\beta}{\alpha}}.
\end{align*}
When $\alpha+\beta<2$, we use Lemma \ref{lem:order} (v) and Proposition \ref{prop:f_bound} (i) and (ii), 
\begin{align*}
&\|(P_t - I) \phi\|_\infty 
\leq C_5\Big(\|w(t)\|_\infty +
\|w(t)\|_{C^1_b}^{\frac{\beta}{\alpha+\beta-1}}\|w(t)\|_{C^{\alpha+\beta-1}_b}^{\frac{\alpha-1}{\alpha+\beta-1}}\Big)\\
&\leq C_5\big(\|\phi\|_\infty t + \|\phi\|_{C_b^\beta}\,t^{\frac{\alpha+\beta-1}{\alpha}\cdot\frac{\beta}{\alpha+\beta-1}}\,1^{\frac{\alpha-1}{\alpha+\beta-1}}\big)\leq 2C_5 \|\phi\|_{C_b^\beta} t^{\frac{\beta}{\alpha}}.\qedhere
\end{align*}
\end{proof}

We now state the optimal time regularity result for our problem.
  
\begin{thm}\label{thm:timeSchauder}
    Assume the assumptions of Theorem \ref{thm::full_schauder_regularity}, \ref{assump:H_t}, 
    \ref{assump:L2'}, 
    $u_0\in C^{\alpha+\beta}_b(\R^d)$, and $\alpha+\beta\in (1,2)\cup(2,3)$. Then 
    \begin{align*}
        \sup_x \|u(\cdot,x)\|_{C^{1+\frac{\beta}{\alpha}}_b([0,T])} \leq C(d,\alpha,\beta,T,\|Du\|_\infty,H)  (\sup_{t\in [0,T]} \|u(t,\cdot)\|_{C^{\alpha + \beta}_b}+1).
    \end{align*}

\end{thm}
    \begin{rem}
       The classical case $\alpha = 2$ requires a different proof and has been omitted. Theorem~\ref{thm:timeSchauder}  combined with the spatial bounds from Theorem \ref{thm::full_schauder_regularity}, can be used to prove blow-up rates for the $C^{1+\frac{\beta}{\alpha}}_b([\varepsilon,T])$-norm as $\varepsilon\to0$ for initial conditions $u_0\in C^1_b(\R^d)$. 
    \end{rem} 
\begin{proof}
Let $f(t,x):=H(t,x,Du(t,x))$.  Then by Theorem~\ref{thm:u_is_classical_sol} and Remark~\ref{rem:weaker} we find that
    $\partial_t u = \mL u + f$, and hence it suffices to show that $\mL u$ and $f$ are $\frac \beta \alpha$-H\"older in time. 
    
    We start with $f$. Let $\Delta t>0$. Set $R_1 = \|Du\|_\infty$ and use \ref{assump:H_t} 
    and \ref{assump:H2} to see that
\begin{equation}\label{eq:temporal_reg_f}
\begin{aligned}
    |f(t+\Delta t,x)-f(t,x)| &\leq |H(t+\Delta t,x,Du(t+\Delta t,x))-H(t+\Delta t,x,Du(t,x))| \\
    &  \qquad +|H(t+\Delta t,x,Du(t,x))-H(t,x,Du(t,x))|\\
    & \leq L_{R_1}\sup_x[Du(\cdot,x)]_{\frac \beta \alpha} \Delta t^\frac{\beta}{\alpha}+ \begin{cases}
        K_{R_1}\Delta t^{\frac{\beta}{\alpha}}, \quad &\Delta t<1,\\[0.2cm]
         2(L_{R_1}R_1+H_0)\Delta t^{\frac{\beta}{\alpha}}, \quad &\Delta t\geq1.
    \end{cases}
\end{aligned}
\end{equation}
To estimate $[Du(\cdot,x)]_{\frac \beta \alpha}$, we note that 
$Du(t,x) = P_t[Du_0](x) + \int_0^t DP_{t-s}[f(s,\cdot)](x)\, ds$. By  
Lemma \ref{lem:t=0}, 
    $\sup\limits_{x\in \R^d}\|P_{(\cdot)} Du_0(x)\|_{C^{\frac \beta\alpha}_b} \lesssim \|u_0\|_{C^{1+\beta}_b}$.
Furthermore, using Theorem~\ref{lem::lem1} and Lemma \ref{lem:t=0},
\begin{align*}
    &\bigg|\int_0^{t+\Delta t} DP_{t+\Delta t-s}f(s,\cdot)(x)\, ds - \int_0^t DP_{t-s}f(s,\cdot)(x)\, ds\bigg|\\
    &\leq \int_0^t\|D(P_{\Delta t} - I)P_{t-s}f(s,\cdot)\|_\infty \, ds + \int_0^{\Delta t} \|DP_s f(t+\Delta t - s)(x)\|_\infty \, ds\\
    &\lesssim (\Delta t)^{\frac \beta\alpha} \int_0^t \|DP_{t-s} f(s,\cdot)\|_{C^\beta_b}\, ds + \sup\limits_{t\in (0,T]} \|f(t,\cdot)\|_{C^\beta_b} \int_0^{\Delta t}s^{-\frac{1-\beta}{\alpha}}\, ds\\
    &\lesssim (\Delta t)^{\frac \beta\alpha}\sup\limits_{t\in (0,T]} \|f(t,\cdot)\|_{C^\beta_b}.
\end{align*}
Thus we get
\begin{align*}
     \sup\limits_{x\in \R^d} \|f(\cdot,x)\|_{C^{\frac \beta \alpha}_b} &\leq C(d,\alpha,\beta,T,\|Du\|_\infty,H) \big(\sup\limits_{t\in (0,T]} \|f(t,\cdot)\|_{C^\beta_b}+1\big)
     \\
     &\leq C'(d,\alpha,\beta,T,\|Du\|_\infty,H) \big(\sup\limits_{t\in (0,T]} \|u(t,\cdot)\|_{C_b^{1+\beta}} +1\big),
\end{align*}
with the last inequality following from the last lines of the proof of Lemma~\ref{lem:H_regularity}.

It remains to show the H\"older regularity of \begin{align*}
    \mL u(t,x) = \mL P_t u_0(x) + \mL \int_0^t P_{t-s} f(s,\cdot)(x)\, ds.
\end{align*}
For the first term we use Lemma \ref{lem:t=0}  and Lemma \ref{lem:order} (iii):  
\begin{align*}
   |\mL P_{t+\Delta t} u_0(x) - \mL P_t u_0(x)| \leq |(P_{\Delta t} - I)\mL P_tu_0(x)| \lesssim (\Delta t)^{\frac \beta\alpha} \|\mL P_t u_0\|_{C^\beta_b} \lesssim (\Delta t)^{\frac \beta\alpha} \|u_0\|_{C^{\alpha+\beta}_b}.
\end{align*}
For the second term we write 
\begin{align*}
    &\bigg|\mL \int_0^{t+\Delta t} P_{t+\Delta t -s} f(s,\cdot)(x)\, ds - \mL \int_0^t P_{t-s} f(s,\cdot)(x)\, ds\bigg|\\ 
    &\leq |(P_{\Delta t} - I) \mL \int_0^{t} P_{t-s} f(s,\cdot)(x)\, ds| + \int_0^{\Delta t} |\mL P_s f(t+\Delta t-s)(x)|\, ds =: I_A + I_B.
\end{align*}
 The fact that $(P_{\Delta t} - I)$ commutes with the integral and $\mL$ is the result of the uniform boundedness of the integrand and the $\alpha+\beta$ H\"older regularity of the time integral in $I_A$. The application of Fubini's theorem in the second term is possible because of the absolute convergence of the double integral given by $\mL \int_0^{\Delta t}$. The absolute convergence follows from 
 Lemma \ref{lem:order} (i) and (iv)  with $r=s^{\frac{1}{\alpha}}$ and the following computations which use Theorem \ref{lem::lem1}: 
\begin{align*}
    I_B&= \int_{0}^{\Delta t} \big| \mathcal LP_s[f(t+\Delta t-s, \cdot)](x) \big|\, ds \\
    &\lesssim \int_0^{\Delta t} \Big(\|D^2P_s[f(t+\Delta t-s, \cdot)] \|_\infty s^{\frac{2-\alpha}{\alpha}}\\
    &\qquad\qquad+\|DP_s[f(t+\Delta t-s, \cdot)] \|_\infty s^{\frac{1-\alpha}{\alpha}}+ \|P_s[f(t+\Delta t-s, \cdot)] \|_\infty \Big)\, ds\\
    &\lesssim \int_0^{\Delta t} \big([f(t+\Delta t-s, \cdot)]_{\beta}(s^{-\frac{2-\beta}{\alpha}}s^{\frac{2-\alpha}{\alpha}}+s^{-\frac{1-\beta}{\alpha}}s^{\frac{1-\alpha}{\alpha}})+\|f(t+\Delta t-s,\cdot) \|_\infty\big)\, ds \\
    &\lesssim \sup_{t\in (0,T]}\|f(t,\cdot) \|_{C_b^{\beta}}\int_0^{\Delta t}\big(s^{\frac{\beta}{\alpha}-1}+1\big)\, ds \lesssim (\Delta t)^{\frac{\beta}{\alpha}}\sup_{t\in (0,T]}\|f(t,\cdot) \|_{C_b^{\beta}}.
\end{align*}
For $I_A$ we use Lemma \ref{lem:t=0}, Lemma \ref{lem:order} (iii), 
and Theorem~\ref{prop:f_bound} (ii):
\begin{align*}
    I_A \lesssim (\Delta t)^{\frac \beta\alpha} \|\mL \int_0^tP_{t-s} f(s,\cdot)\, ds\|_{C^{\beta}_b} \lesssim (\Delta t)^{\frac \beta\alpha} \sup\limits_{t\in (0,T]}\|f(t,\cdot)\|_{C^{\beta}_b}.
\end{align*}
We conclude the proof by summing up the estimates and get that
\begin{align*}
    \sup\limits_{x\in \R^d} \|\mL u(\cdot,x)\|_{C^{\frac \beta\alpha}_b} &\leq C(d,\alpha,\beta,T,\|Du\|_\infty,H)  \sup\limits_{t\in (0,T]}\|f(t,\cdot)\|_{C^\beta_b} \\
    &\leq C'(d,\alpha,\beta,T,\|Du\|_\infty,H) \big(\sup\limits_{t\in (0,T]} \|u(t,\cdot)\|_{C_b^{1+\beta}} +1\big). \qedhere
\end{align*}
\end{proof}

Combining spatial regularity from Theorem \ref{thm::full_schauder_regularity} with temporal regularity from Theorem \ref{thm:timeSchauder}, we can show that the solutions of \eqref{eq:hjb} belong to the space-time H\"older space $C_b^{\frac{\alpha+\beta}{\alpha}, \alpha+\beta}([0,T]\times \R^d)$ and  hence get a full space-time Schauder regularity result. Recall that $C_b^{\frac{\alpha+\beta}{\alpha}, \alpha+\beta}([0,T]\times \R^d)$ is  the subspace of continuous functions such that the following norm is finite:
\begin{equation}\label{eq:spacetimenorm}\begin{split}
    \| u \|_{C_b^{\frac{\alpha+\beta}{\alpha}, \alpha+\beta}([0,T]\times \R^d)}&:= \sup_{t\in[0,T]}\|u(t,\cdot) \|_{C^{\alpha+\beta}_b(\R^d)}+\sup_{x\in \R^d}\|u(\cdot,x) \|_{C^{\frac{\alpha+\beta}{\alpha}}_b([0,T])} \\
    &\qquad +\sup_{t\in[0,T]}[\partial_tu(t,\cdot)]_{\beta}+\sum_{k=1}^{\lfloor \alpha +\beta\rfloor}\sup_{x\in \R^d}[D^ku(\cdot,x)]_{\frac{\alpha+\beta-k}{\alpha}}.
    \end{split}
\end{equation}
\begin{thm}\label{thm:spacetimeSchauder}
    Assume the assumptions of Theorem \ref{thm:timeSchauder}. Then
    \begin{align*}
        \| u \|_{C_b^{\frac{\alpha+\beta}{\alpha}, \alpha+\beta}([0,T]\times \R^d)} \leq C(d,\alpha,\beta,T,\| Du\|_\infty, H) (\sup_{t\in[0,T]}\| u(t,\cdot) \|_{C^{\alpha+\beta}_b}+1).
    \end{align*}
\end{thm}
\begin{proof}
    We only give the proof for $\alpha+\beta\in(2,3)$; the case $\alpha+\beta \in (1,2)$ is simpler. By Theorems \ref{thm::full_schauder_regularity} and \ref{thm:timeSchauder}, the first two terms in the norm are finite. 
    As before we let $f(t,x):=H(t,x,Du(t,x))$, and recall that $\partial_t u=\mathcal L u+f$.
         Then by Lemma \ref{lem:order} (iii), $\sup_t [\partial_t u(t,\cdot)]_\beta$ is also finite since
\begin{equation}\label{eq:full_schauder_time_mixed}
    [\partial_t u(t,\cdot)]_\beta \leq [\mathcal Lu(t,\cdot)]_\beta+[f(t,\cdot)]_\beta  \lesssim \sup_{t\in [0,T]} \|u(t,\cdot) \|_{C^{\alpha+\beta}_b}+\sup_{t\in [0,T]} \|f(t,\cdot) \|_{C^{\beta}_b},
        \end{equation}
which is finite by the spatial regularity of $u$ and $f$.

 Next we consider $\sup_x[Du(\cdot,x)]_{ \frac{\alpha+\beta-1}{\alpha}}$.
 Let $\partial_i$ denote the derivative and $\delta^h_i$ the central difference in the $i$-th direction, that is $\delta^h_ig(x)=(g(x+he_i)-g(x-he_i))/(2h)$. Then, by the triangle inequality,
    \begin{align*}
        |\partial_i u(t+\Delta t,x)-\partial_iu(t,x)| &\leq |\partial_i u(t+\Delta t,x)-\delta^h_i u(t+\Delta t,x)| \\
        &\quad +|\delta^h_i u(t+\Delta t,x)-\delta^h_i u(t,x)|+|\delta^h_i u(t,x)-\partial_i u(t,x)|.
    \end{align*}
    Consider the last term (the first is similar). By the fundamental theorem of calculus,
    \begin{align*}
        2(\delta^h_i u(t,x)-\partial_i u(t,x)) &= \int_{-1}^1 (\partial_iu(t,x+\lambda h e_i)-\partial_iu(t,x))\, d\lambda =\int_{-1}^1\int_0^1\partial_{ii}^2u(t,x+\theta \lambda h e_i)\lambda h \, d\theta \, d\lambda \\
        &=\int_{-1}^1\int_0^1(\partial_{ii}^2u(t,x+\theta \lambda h e_i)-\partial_{ii}^2u(t,x) )\lambda h \, d\theta \, d\lambda,
    \end{align*}
    where the last equality follows from adding and subtracting $\partial_{ii}^2u(t,x)$. Consequently, 
    \begin{align*}
        |\partial_i u(t+\Delta t,x)-\delta^h_i u(t+\Delta t,x)|+|\delta^h_i u(t,x)-\partial_i u(t,x)| \leq 2\sup_t[D^2u(t,\cdot)]_{\alpha+\beta-2}h^{\alpha+\beta-1}.
    \end{align*}
    For the middle term, the mean value theorem with some $t'\in (t,t+\Delta t)$ yields
    \begin{align*}
        &|\delta^h_i u(t+\Delta t,x)-\delta^h_i u(t,x)|\\
        &=\frac{|(u(\cdot,x+he_i)-u(\cdot,x-he_i) )(t+\Delta t)-(u(\cdot,x+he_i)-u(\cdot,x-he_i) )(t)|}{2h} \\
        &=\frac{\Delta t}{2h}|\partial_t u(t',x+he_i)-\partial_t u(t',x-he_i)|  \leq \frac{\Delta t}{h^{1-\beta}}(\sup_t \|u(t,\cdot) \|_{C^{\alpha+\beta}_b}+\sup_t \| f(t,\cdot)\|_{C^\beta_b}),
    \end{align*}
    where we used \eqref{eq:full_schauder_time_mixed} in the last inequality. Choosing $h=\Delta t^{\frac{1}{\alpha}}$ yields $h^{\alpha+\beta-1} = \frac{\Delta t}{h^{1-\beta}} = \Delta t ^\frac{\alpha+\beta-1}{\alpha}$ and
    \begin{align*}
        |D u(t+\Delta t,x)-D u(t,x)|\lesssim 
        \Delta t^{\frac{\alpha+\beta -1}{\alpha}} \big(\sup_t \|u(t,\cdot) \|_{C^{\alpha+\beta}_b}+\sup_t \| f(t,\cdot)\|_{C^\beta_b}\big).
    \end{align*}
    
    Finally we consider $\sup_x[D^2u(\cdot,x)]_{ \frac{\alpha+\beta-2}{\alpha} }$: 
        \begin{align*}
        |\partial^2_{ij} u(t+\Delta t,x)-\partial^2_{ij}u(t,x)| &\leq |\partial_i \partial_ju(t+\Delta t,x)-\delta^h_i \partial_j u(t+\Delta t,x)| \\
        &\quad +|\delta^h_i \partial_j u(t+\Delta t,x)-\delta^h_i \partial_j u(t,x)|+|\delta^h_i \partial_j u(t,x)-\partial_i \partial_ju(t,x)|.
    \end{align*}
    For the first and the last term we have
    \begin{align*}
        2|\delta^h_i \partial_ju(t,x)-\partial^{2}_{ij}  u (t,x)|&= \left| \int_{-1}^1 (\partial^2_{ij} u(t,x+\lambda h e_i)-\partial^2_{ij}u(t,x))\,d\lambda \right| 
        \leq 2\sup_t [\partial^2_{ij}u(t,\cdot)]_{\alpha+\beta-2}h^{\alpha+\beta-2}.
    \end{align*}
     For the middle term we 
    use the $[Du(\cdot,x)]_{\frac{\alpha+\beta-1}{\alpha}}$ estimate obtained above:
    \begin{align*}
        &2h|\delta^h_i \partial_j u(t+\Delta t,x)-\delta_i^h\partial_j u(t,x)|\\
        &\leq |\partial_j u(t+\Delta t,x+he_i) - \partial_j u(t,x+he_i)| + |\partial_j u(t+\Delta t,x-he_i) - \partial_j u(t,x-he_i)|\\
        &\leq \Delta t^{\frac{\alpha+\beta -1}{\alpha}} 
        \big(\sup_t \|u(t,\cdot) \|_{C^{\alpha+\beta}_b}+\sup_t \| f(t,\cdot)\|_{C^\beta_b}\big).
    \end{align*}
    Hence by combining the estimates and taking $h = \Delta t^{\frac 1\alpha}$ we find that 
    \begin{align*}
        \qquad |D^2 u(t+\Delta t,x) - D^2 u(t,x)| \lesssim 
         \Delta t^{\frac{\alpha+\beta-2}{\alpha}} \big(\sup_t \|u(t,\cdot) \|_{C^{\alpha+\beta}_b}+\sup_t \| f(t,\cdot)\|_{C^\beta_b}\big). \qquad\qedhere
    \end{align*}
\end{proof}

\section{Extensions and remarks}\label{sec::remarks_and_extensions}
In this section we discuss some  extensions of our results and sketch their proofs.

\subsection{Critical diffusions ($\alpha=1$) with small data}
With relatively little modification to our methods, it is possible to obtain Schauder estimates for diffusions satisfying $\|Dp_t\|_{L^1(\R^d)} \leq c_0 t^{-1}$ (i.e. \ref{NDa'} with $\alpha=1$),  under certain smallness constraints on the data. It is unclear to us whether one can obtain a result with no such constraints without a major change in the proof.
\begin{thm}
    Assume that 
    \begin{align*}\|Dp_t\|_{L^1} \leq c_0 t^{-1},\quad t\in (0,T],\end{align*}
    and there exist constants $H_0>0$, $\beta\in (0,1)$, and $L_R^p$, $L_R^{pp}$, $L_R^x$, $L_R^{xp}$ for $R\in(0,\infty)$, such that
    \begin{align*}
        \begin{cases}
            &|H(t,x,0)| \leq H_0,\\
            &|D_pH(t,x,p)| \leq L_R^p,\\
            &|D_{pp}H(t,x,p)| \leq L_R^{pp},\\
            &|H(t,x,p) - H(t,x',p)| \leq L_R^x|x-x'|^\beta,\\
            &|D_pH(t,x,p) - D_pH(t,x',p)| \leq L_R^{xp}|x-x'|^{\beta},
        \end{cases}\qquad t>0,\ x,x'\in \R^d,\ p,q\in B(0,R).
    \end{align*}
    If $\|u_0\|_{C^{1+\beta}(\R^d)}$ and $H_0$ are small enough, and $L_R^p,L_R^x$, and $L_R^{xp}$, 
    are small enough for small $R$, then the problem \eqref{eq:hjb} has a unique mild solution $u$ such that $u(t,\cdot)\in C^{1+\beta}_b(\R^d)$ for every $t\in [0,T]$.
\end{thm}
For example, if $g(p)=|p|^r$ for $r\geq 2$ or $g(p)=(1+|p|^2)^{r/2}$
for $r>0$, then we can take $H(t,x,p) = b(t,x)g(p) 
+ f(t,x)$ 
provided $b,f\in C([0,T],C^\beta_b(\R^d))$ and $\sup_{t\in[0,T]}\|f(t,\cdot)\|_{C^\beta_b}$ is sufficiently small.
\begin{proof}[Sketch of the proof] In contrast to the previous proofs, here we obtain the Schauder estimates together with the existence result. Let
\begin{align*}
    X = \{u\in B([0,T],C^{1+\beta}_b(\R^d)): \sup\limits_{t\in [0,T]} \|u(t,\cdot)\|_{C^{1+\beta}_b} \leq R_1\},
\end{align*}
with the metric induced by $\sup\limits_{t\in [0,T]} \|\cdot\|_{C^{1+\beta}_b}$ and $R_1$ to be determined, and as usual,
\begin{align*}
    S[u](t,x) = P_t u_0(x) + \int_0^t P_{t-s}H(s,\cdot,Du(s,\cdot))(x)\, ds.
\end{align*}
By rather standard computations using Theorem~\ref{prop:f_bound} we get
\begin{align*}
    \|S[u](t,\cdot)\|_{C^{1+\beta}_b} &\leq \|u_0\|_{C^{1+\beta}_b} + (C(T^\beta + 1) + T)\sup\limits_{s\in [0,T]}\|H(s,\cdot,Du(s,\cdot))\|_{C^\beta_b}\\
    &\leq \|u_0\|_{C^{1+\beta}_b} + (C(T^\beta + 1) + T)(H_0 + L_{R_1}^x + L_{R_1}^p R_1).
\end{align*}
The last expression is smaller than $R_1$ under postulated smallness conditions, in which case $S$ maps $X$ to $X$. For the contractivity, again using Theorem~\ref{prop:f_bound} we find that
\begin{align*}
    \|S[u](t,\cdot) - S[v](t,\cdot)\|_{C^{1+\beta}_b} \leq (T + C(T^{\beta} + 1)) \sup\limits_{s\in [0,T]} \|H(s,\cdot,Du(s,\cdot)) - H(s,\cdot,Dv(s,\cdot))\|_{C^{\beta}_b}.
\end{align*}
Furthermore, 
\begin{align*}
\|H(s,\cdot,Du(s,\cdot)) - H(s,\cdot,Dv(s,\cdot))\|_{\infty} \leq L_{R_1}^p \|Du-Dv\|_\infty,
\end{align*}
and by using the fundamental theorem of calculus and splitting we find that
\begin{align*}
    [H(s,\cdot,Du(s,\cdot)) - H(s,\cdot,Dv(s,\cdot))]_\beta \leq L_{R_1}^{xp} \|Du - Dv\|_\infty + L_{R_1}^{pp}R_1 \|Du - Dv\|_\infty + L_{R_1}^p [Du - Dv]_{\beta}.
\end{align*}
Therefore, 
\begin{align*}
    \|S[u]- S[v]\|_{X} \leq (T + C(T^{\beta} + 1)) (L_{R_1}^p + L_{R_1}^{xp} + L_{R_1}^{pp}R_1)\|u-v\|_{X},
\end{align*}
which is smaller than 1 if we take $R_1$ small enough. Thus, under the assumptions of the theorem $S$ is a contractive mapping on $X$, so by Banach's fixed point theorem it has a unique fixed point. The Schauder estimates follow from the definition of $X$.
\end{proof}

\subsection{Nonlinearities with respect to other lower order operators} 
With almost no change in the proofs we can get optimal Schauder estimates for Hamiltonians depending also on other linear quantities $Qu$ of order not exceeding 1. More precisely, we let $H = H(t,x,p,q)$ be continuous and assume:\smallskip
\begin{align}
    &|H(t,x',p',q') - H(t,x,p,q)| \leq C_R\big(|x-x'|^\beta\wedge1 + |p-p'| + |q-q'|\big)\quad \textrm{and}\quad H(t,x,0,0)\leq C\label{eq:Hcond}\\
    &\qquad\qquad\textrm{for all}\quad t\in [0,T],\ x,x'\in \R^d,\ p,p',q,q'\in B(0,R), \nonumber\\[0.2cm] 
 &Q \text{ is linear,} \ \ \|Q\phi\|_\infty \leq C\|\phi \|_{C^1_b(\R^d)}, \ \  \textrm{and}\ \  [Q \phi]_\sigma \leq C \|\phi\|_{C^{1+\sigma}_b}\ \ \text{for all} \ \sigma\in (0,\beta].\label{eq:Qcond}
\end{align}\smallskip
We then have the following generalization of Theorems \ref{thm::hjb_sol_existence} and \ref{thm::full_schauder_regularity} on short-time existence and Schauder regularity for the case of solutions with bounded gradients uniformly in time.
\begin{thm}\label{thm:Qdependent}
    Assume \ref{assump:H1}, \eqref{eq:Hcond}, \eqref{eq:Qcond} for some $\beta\in [0,1]$, and \ref{NDa} with $\alpha\in (1,2]$ such that $\alpha+\beta\notin \mathbb{N}$.
    Then, for sufficiently small $T$, the equation
\begin{align}\label{eq:Qproblem}
\left\{
    \begin{aligned}
         \partial_t u -\mathcal{L}u-H(t,x,Du,Qu)&=0, \qquad &(t,x)&\in (0,T]\times \R^d, \\ 
        u(0,x)&=u_0(x), \qquad &x &\in \R^d,
    \end{aligned}
\right.
\end{align}
has a unique  mild solution $u\in C_b((0,T],C^1_b(\R^d))$ and for all $t\in (0,T]$ we have $u(t)\in C^{\alpha+\beta}_b(\R^d)$.
\end{thm}
\begin{proof}[Sketch of the proof]
    The short-time existence is done in almost the same way as in Theorem~\ref{thm::hjb_sol_existence}, in the set $X_A$ defined in \eqref{eq:XAdef}. The only difference is that now to control the Hamiltonian term, we use \eqref{eq:Hcond} and the fact that $\|Qu\|_\infty \leq \|u\|_{C^1_b}$.

    For the $C^{\alpha+\beta}_b(\R^d)$ estimate we also use a bootstrap argument -- first establishing $C^{1+\beta}_b(\R^d)$ regularity and then improving it as was done in Theorem~\ref{thm::full_schauder_regularity}. The difference now is that we use different arguments to get $C^{1+\beta}_b(\R^d)$ regularity of $u(t)$ (Gr\"onwall's inequality seems to impose additional restrictions on $Q$):
    If $\alpha > 1+\beta$ it follows immediately from (ii) in Lemma~\ref{lem::duhamel_map_boundedness}, while for $\alpha\leq 1+\beta$ we only get  $u(t)\in C^{\alpha-\epsilon}_b(\R^d)$ from Lemma~~\ref{lem::duhamel_map_boundedness}, but then $H(t,x,Du,Qu)$ is $\alpha-\epsilon-1$ H\"older regular by \eqref{eq:Hcond} and \eqref{eq:Qcond} and we can get $1+\beta$ regularity by bootstrapping with the use of Theorem~\ref{prop:f_bound}. Once we get that $u(t)\in C^{1+\beta}_b(\R^d)$, the remainder of the proof is identical to that of Theorem~\ref{thm::full_schauder_regularity}.
\end{proof}
The space-time regularity and classical solvability results can also be extended to this context. 
\begin{thm}
Assume \ref{assump:H1}, \eqref{eq:Hcond}, \eqref{eq:Qcond} for some $\beta\in (0,1]$, \ref{NDa} with $\alpha\in (1,2]$ such that $\alpha+\beta\notin \mathbb{N}$, 
and $u\in C_b((0,T],C^1_b(\R^d))$ is a mild solution of \eqref{eq:Qproblem}. 

    \medskip
    \noindent (a) \ 
If $\alpha+\beta > 2$ or \ref{assump:L2} holds, then $u$ is a classical solution of \eqref{eq:Qproblem}.
   %

    \medskip 
    \noindent (b) \ If in addition $\beta\in(0,1)$, $\alpha+\beta\in (1,2)\cup(2,3)$, 
    $u_0\in C^{\alpha+\beta}_b(\R^d)$, \ref{assump:L2'} holds, and

  \begin{align*}
    &|H(t,x,p,q)-H(t',x,p,q)| \leq K_R|t-t'|^{\frac{\beta}{\alpha}}, \quad t,t'\in[0,T] \text{ with } |t-t'|<1,\ x\in \R^d, \ p,q\in B(0,R),
\end{align*} 
    then the space-time H\"older estimate of Theorem 
    \ref{thm:spacetimeSchauder} holds:
        \begin{align*}
        \| u \|_{C_b^{\frac{\alpha+\beta}{\alpha}, \alpha+\beta}([0,T]\times \R^d)} \leq C(d,\alpha,\beta,T,\| Du\|_\infty, H) (\sup_{t\in[0,T]}\| u(t,\cdot) \|_{C^{\alpha+\beta}_b}+1).
    \end{align*}
\end{thm}
\begin{proof}
    Let $  f(t,x):=H(t,x,Du,Qu)$, and note that $f(t,\cdot)\in C^\beta_b$ by Theorem \ref{thm:Qdependent}, \eqref{eq:Hcond}, and \eqref{eq:Qcond}.
    Then (a) and (b) follow by inspecting the proofs of Theorem~\ref{thm:u_is_classical_sol} and Theorems~\ref{thm:timeSchauder} and \ref{thm:spacetimeSchauder} respectively.
    The only difference is that we now have the extra term
    $$C_{R_1}|Qu(t+\Delta t, x)-Qu(t,x)| \qquad \text{where} \qquad R_1:=\max(C,1)\sup_{t\in [0,T]}\|u(t,\cdot)\|_{C^1_b}$$ in \eqref{eq:f_locally_uniformly_cont} and \eqref{eq:temporal_reg_f} respectively, with $C$ from \eqref{eq:Qcond}. As $Q$ is linear, by \eqref{eq:Qcond} we have that
    \begin{align*}
        |Qu(t+\Delta t, x)-Qu(t,x)|\leq C
        \cdot\begin{cases}
            \omega_u(\Delta t), \qquad &\text{in case } (a),\\[0.2cm]
            \Delta t^{\frac{\beta}{\alpha}}
            \big(\|u(\cdot,x)\|_{C^{\frac{\beta}{\alpha}}_b} + \|Du(\cdot,x)\|_{C^{\frac{\beta}{\alpha}}_b}\big), \qquad &\text{in case } (b),
        \end{cases}
    \end{align*}
    where $\omega_u$ is the modulus of continuity of $u$ and $Du$, 
     and the cases follow from the discussion preceding \eqref{eq:f_locally_uniformly_cont} and the computations following \eqref{eq:temporal_reg_f} respectively. Thus, the rest of the proofs go through without modification.
\end{proof}
\begin{example}\ \smallskip
\begin{itemize}
\item[(a)] (0 order terms) $Qu = u$ satisfies assumption \eqref{eq:Qcond}.
 
\medskip

\item[(b)] (Lower order nonlocal operators)  Let $Q u(x) = \int_{\mathbb{R}^d} (u(x+j(x,z)) - u(x))\, d\nu(z)$ for $x\in \R^d$, where 
$\nu\geq0$, 
$\int_{\R^d} (1 \wedge |z|)\, d\nu(z)< \infty$, and
$$|j(x,z)|+\frac{|j(x,z)-j(y,z)|}{|x-y|^\beta}\leq K(1\wedge |z|)
\quad \text{for}\quad x,y,z\in \R^d.$$ 
A straightforward calculation shows that \eqref{eq:Qcond} holds:
\begin{align*}
    \|Q u\|_\infty &\leq C \|u\|_{C^1_b}\qquad\text{and}\qquad [Qu]_\sigma\leq C \|u\|_{C^{1+\sigma}_b} ,\quad \sigma \in (0,1].
\end{align*}

\item[(c)] (Operators with modulated jumps) 
 Replace $\mathcal{L}$ 
by an operator with modulated jumps,
\begin{align*}
    \mL_{\bfj} u(x) = \lim\limits_{\epsilon\to 0}\int_{B(0,\epsilon)^c} (u(x) - u(x+\bfj(z)))\, d \mu(z),\quad x\in \R^d,
\end{align*}
where $\mu$ is a L\'evy measure and $\bfj\colon \R^d \to \R^d$ behaves well at the origin, e.g. \begin{align}\label{eq:jcond}|\bfj (z) - z| \leq C|z|^2,\quad |z|\leq 1.\end{align} 
Then Theorem \ref{thm:Qdependent} still holds because we can 
write $\mL_{\bfj} = \mL + Q$ where the linear perturbation
\begin{align*}
    Qu(x):=(\mL_{\bfj} - \mL)u(x) = \lim\limits_{\epsilon\to 0}\int_{B(0,\epsilon)^c} (u(x+z) - u(x+\bfj(z)))\, d \mu(z),\quad x\in \R^d.
\end{align*}
Assuming \eqref{eq:jcond}, it is easy to see that \eqref{eq:Qcond} holds, 
and by redefining $H(t,x,Du) + Qu$ as $H(t,x,Du,Qu)$, we see that also \eqref{eq:Hcond} holds.
\end{itemize}
\end{example}

\begin{rem}
It is now quite standard to extend Theorem \ref{thm:Qdependent} to Bellman/dynamic programming equations for optimal control problems for jump-diffusions of order less than one \cite{MR3931325,Ha:Book},
\begin{align*}
\left\{
    \begin{aligned}
         \partial_t u -\mathcal{L}u-\sup_{\theta}\Big\{Q^\theta u + b^\theta(t,x)Du + c^\theta (t,x)u + f^\theta(t,x)\Big\}&=0, \qquad &(t,x)&\in (0,T]\times \R^d, \\ 
        u(0,x)&=u_0(x), \qquad &x &\in \R^d,
    \end{aligned}
\right.
\end{align*}
where $Q^\theta$ is defined as in (b) with $j^\theta$ in place of $j$, and we assume the following uniformly in $\theta$:
$$|j^\theta(x,z)|+\frac{|j^\theta(x,z)-j^\theta(y,z)|}{|x-y|} \leq K(1\wedge |z|),$$ 
combined with (standard) uniform in $\theta$ boundedness and Lipschitz conditions on $(b^\theta,c^\theta,f^\theta)$.

\end{rem}

\subsection{More regularity in certain directions}\label{subseq:directions}

Our optimal Schauder regularity implies that under assumptions \ref{assump:H} and \ref{NDa} the solutions are $C^{\alpha+\beta}_b(\R^d)$, in the sense that the total derivative of order $\lfloor \alpha+\beta\rfloor$ is $\{\alpha +\beta\}$-H\"older regular. However, for some specific operators $\mL$ we can expect that some directional derivatives will have more regularity. This is the case e.g. when $\mL = (-\Delta)^{\alpha_1/2}_{x_1} + (-\Delta)^{\alpha_2/2}_{x_2}$ with $1<\alpha_2\leq\alpha_1\leq 2$ and $x_1\in \R^{d_1}$, $x_2 \in \R^{d_2}$. The reason for this is that the heat kernel $p_t$ of such an operator is a convolution of the heat kernels of the two fractional Laplacians: $p_t^1$ and $p_t^2$. Then, given a function $f\in C^{\beta}_b(\R^{d_1+d_2})$, we have $p_t \ast f = p_t^1\ast (p_t^2\ast f)$ and since $\|p_t^2\ast f(\cdot,x_2)\|_{C^\beta_b(\R^{d_1})}$ are uniformly bounded for $x_2\in \R^{d_2}$, the regularity in $x_1$ only depends on $p_t^1$. The following more general result holds true.

\begin{thm}
    Let $d=d_1+d_2$ for some $0\leq d_1,d_2\leq d$ and for $x\in \R^d$ denote $x=(x_1,x_2)$ where $x_i\in\R^{d_i}$, $i=1,2$. Assume \ref{assump:H2}, \ref{assump:H1}, and \ref{assump:H} with $\beta\in[0,1]$, and that $\mL^1$ and $\mL^2$ are L\'evy operators on $\R^{d_1}$ and $\R^{d_2}$ respectively, which satisfy \ref{NDa} with $
    \alpha_1$ and $\alpha_2$ respectively, with $1<\alpha_2\leq\alpha_1\leq 2$.
    Define a L\'evy operator on $\R^d$ by $\mL = \mL^1_{x_1} + \mL^2_{x_2}$.\medskip
    
    \noindent (i)\ For sufficiently small $T$, there exists a mild solution $u\in C_b((0,T];C^1_b(\R^d))$ of \eqref{eq:hjb}.\medskip
    
    \noindent (ii)\ If $\alpha_2+\beta \notin \N$, then $u(t)\in C^{\alpha_2+\beta}_b(\R^d)$ for each $t\in (0,T]$.\medskip
    
    \noindent (iii)\ If also $\alpha_1+\beta \notin \N$, then for each $t\in (0,T]$ we have $u(t,\cdot,x_2)\in C^{\alpha_1+\beta}_b(\R^{d_1})$ uniformly for $x_2\in \R^{d_2}$, that is, $\sup_{x_2}[D_{x_1}^{ \lfloor\alpha_1+\beta\rfloor} u(t,\cdot,x_2)]_{\{\alpha_1+\beta\}} < \infty$ where $\{\alpha+\beta\}=\alpha+\beta-\lfloor \alpha+\beta \rfloor$.
\end{thm}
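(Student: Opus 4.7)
The crux of the argument is the product structure of the heat kernel. Since $\mathcal{L}^1$ and $\mathcal{L}^2$ act on disjoint variables, the semigroups commute and $p_t(x_1,x_2) = p_t^1(x_1)p_t^2(x_2)$, where $p_t^i$ is the heat kernel of $\mathcal{L}^i$. Consequently any partial derivative in $x_1$ hits only $p_t^1$, and Remark~\ref{rem:NDam} applied to $\mathcal{L}^1$ gives the directional bound
\[\|D_{x_1}^m p_t\|_{L^1(\R^d)} = \|D^m p_t^1\|_{L^1(\R^{d_1})} \leq (m^{1/\alpha_1}c_0^{(1)})^m\, t^{-m/\alpha_1},\]
with exponent $\alpha_1$ rather than $\alpha_2$.

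For parts (i) and (ii), I first check that $\mathcal{L}$ itself satisfies \ref{NDa} with exponent $\alpha_2 = \min(\alpha_1,\alpha_2)$. Splitting $Dp_t = (D_{x_1}p_t^1\cdot p_t^2,\; p_t^1\cdot D_{x_2}p_t^2)$ and using $\|p_t^i\|_{L^1}=1$ yields $\|Dp_t\|_{L^1(\R^d)} \leq c_0^{(1)}t^{-1/\alpha_1} + c_0^{(2)}t^{-1/\alpha_2} \lesssim t^{-1/\alpha_2}$ for $t \in (0,T]$. Parts~(i) and (ii) then follow at once from Theorems~\ref{thm::hjb_sol_existence} and \ref{thm::full_schauder_regularity} applied with $\alpha=\alpha_2$.

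For part (iii), I introduce the partial Hölder seminorm $[\varphi]_{\beta,x_1} := \sup_{x_2\in\R^{d_2}} [\varphi(\cdot,x_2)]_\beta$ and repeat the proofs of Theorems~\ref{lem::lem1} and \ref{prop:f_bound} verbatim, replacing $D^k$ by $D_{x_1}^k$, $\alpha$ by $\alpha_1$, and the full Hölder seminorms by their partial-$x_1$ counterparts. The directional $L^1$ bound above drives all the estimates, and since $x_2$ enters only as a parameter in the convolution with $p_t$, interpolation (Lemma~\ref{thm:holder_interpolation}) and the ``diagonal splitting'' argument of Theorem~\ref{prop:f_bound} carry over without change. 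This gives directional smoothing estimates of the form $\sup_{x_2}\|D_{x_1}^k P_t\varphi(\cdot,x_2)\|_\infty \lesssim [\varphi]_{\beta,x_1}\, t^{-(k-\beta)/\alpha_1}$ and their Hölder-seminorm and Duhamel analogues. Part~(ii) supplies $u(t)\in C_b^{\alpha_2+\beta}(\R^d)\subset C_b^{1+\beta}(\R^d)$ since $\alpha_2>1$, hence $\sup_{x_2}[Du(t,\cdot,x_2)]_\beta < \infty$; combined with \ref{assump:H2} and \ref{assump:H}, this yields $\sup_{x_2}[H(s,\cdot,Du(s,\cdot))(\cdot,x_2)]_\beta < \infty$ uniformly in $s\in(0,T]$. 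Differentiating the mild formula in $x_1$ and applying the directional versions of Theorems~\ref{lem::lem1}(ii) and \ref{prop:f_bound}(ii) exactly as in the proof of Theorem~\ref{thm::full_schauder_regularity} then yields the claimed $C^{\alpha_1+\beta}$ regularity of $u(t,\cdot,x_2)$ uniformly in $x_2$.

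The main obstacle is bookkeeping rather than genuine analysis: one must systematically translate every estimate of Section~\ref{sec::operator_L_and_heat_kernel} to the partial-derivative setting and verify that interchange of derivative and integral, interpolation, and the time-cylinder splitting all preserve uniformity in $x_2$. A minor additional point is that when $\alpha_1+\beta>2$ one should check that $D_{x_1}^2 P_{t-s}[H(s,\cdot,Du)]$ is integrable at $s=t$, exactly as in Case~2 of the proof of Theorem~\ref{thm::full_schauder_regularity}.
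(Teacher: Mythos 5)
Your proposal is correct and follows exactly the route the paper itself indicates (the paper only sketches this result via the factorization $p_t\ast f = p_t^1\ast(p_t^2\ast f)$ and the observation that regularity in $x_1$ is governed by $p_t^1$ alone): you verify \ref{NDa} for $\mL$ with exponent $\alpha_2$ to import Theorems \ref{thm::hjb_sol_existence} and \ref{thm::full_schauder_regularity}, and then rerun Theorems \ref{lem::lem1} and \ref{prop:f_bound} directionally with the $L^1$ bound $\|D_{x_1}^m p_t\|_{L^1}\leq C t^{-m/\alpha_1}$ and partial H\"older seminorms uniform in $x_2$. The only imprecision is the phrase ``uniformly in $s\in(0,T]$'' for $[H(s,\cdot,Du(s,\cdot))]_\beta$, which should carry the weight $s^{\beta/\alpha_2}$ from Lemma \ref{lem:H_regularity}; your subsequent appeal to the weighted directional version of Theorem \ref{prop:f_bound}(ii) handles this correctly.
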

Part (iii) above states that we get more regularity in the direction of $x_1$. The result could be further generalized -- in some cases we could get more regularity in non-axial directions, for example by adding a one-dimensional diffusion in such direction. We refer to \cite[Theorem~4.2]{MR4309434} for a related discussion.
\section*{Acknowledgements}
A. Rutkowski was supported by the National Science Center (Poland) grant 2023/51/B/ST1/02209. E. R. Jakobsen and R. \O. Lien received funding from the Research Council of Norway under Grant Agreement No. 325114 “IMod. Partial differential equations, statistics and data: An interdisciplinary approach to data-based modelling”.
\appendix

\section{Some technical results}\label{sec::app}
In this section we collect some technical results used throughout the paper. The first is a Hölder-interpolation result. 
\begin{lem}[Hölder interpolation]
\label{thm:holder_interpolation}
    If $g\in C_b^\eta(\R^d)$ and $0<\gamma<\eta\leq 1$, then
    $$[g]_{\gamma} \leq 2^{1-\frac{\gamma}{\eta}}(\|g\|_{\infty})^{1-\frac{\gamma}{\eta}}([g]_{\eta})^{\frac{\gamma}{\eta}}.$$
    When $\eta=1$ we also have 
    $$\|Dg\|_{\infty} \leq C_{\gamma, d} [g]_{\gamma}^{\gamma}[Dg]_{\gamma}^{1-\gamma}.$$
\end{lem}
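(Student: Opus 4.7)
The plan is to prove both inequalities by elementary means; each is a classical interpolation that follows from the pointwise definitions of the seminorms combined with a careful choice of length scale, so no heavy machinery is needed.

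For the first inequality, I would combine the two trivial pointwise bounds $|g(x+h)-g(x)| \leq [g]_\eta |h|^\eta$ and $|g(x+h)-g(x)| \leq 2\|g\|_\infty$ via the identity
$$|g(x+h)-g(x)| = |g(x+h)-g(x)|^{\gamma/\eta}\cdot|g(x+h)-g(x)|^{1-\gamma/\eta} \leq \bigl([g]_\eta|h|^\eta\bigr)^{\gamma/\eta}\bigl(2\|g\|_\infty\bigr)^{1-\gamma/\eta}.$$
Dividing by $|h|^\gamma$ and taking the supremum over $x$ and $h\neq 0$ yields exactly $[g]_\gamma \leq 2^{1-\gamma/\eta}\|g\|_\infty^{1-\gamma/\eta}[g]_\eta^{\gamma/\eta}$ with the stated constant.

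For the second inequality (the $\eta=1$ case), the idea is a truncated Taylor expansion that exploits the Hölder regularity of $Dg$. Fix $x\in\R^d$ and a unit vector $h_0$; for $t>0$, write
$$g(x+th_0)-g(x) = tDg(x)\cdot h_0 + \int_0^t \bigl(Dg(x+sh_0)-Dg(x)\bigr)\cdot h_0\,ds,$$
so that the remainder integral is bounded by $[Dg]_\gamma t^{1+\gamma}/(1+\gamma)$. Using $|g(x+th_0)-g(x)| \leq [g]_\gamma t^\gamma$ and rearranging gives
$$|Dg(x)\cdot h_0| \leq [g]_\gamma t^{\gamma-1} + \tfrac{1}{1+\gamma}[Dg]_\gamma t^\gamma.$$
Choosing $h_0$ in the direction of $Dg(x)$ so the left-hand side equals $|Dg(x)|$ and then minimizing the right-hand side over $t>0$ (the optimizer is $t=(1-\gamma^2)[g]_\gamma/(\gamma [Dg]_\gamma)$) balances the two terms and produces the desired bound $\|Dg\|_\infty \leq C_\gamma [g]_\gamma^\gamma [Dg]_\gamma^{1-\gamma}$; the constant comes out depending only on $\gamma$, so the dimension-dependence allowed by $C_{\gamma,d}$ is not actually needed.

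Neither step presents a genuine obstacle; the only care needed is to verify the optimizer $t$ is strictly positive (which is automatic because $\gamma\in(0,1)$) and to handle separately the degenerate cases $[g]_\gamma=0$ or $[Dg]_\gamma=0$, in which $g$ is constant and the inequality is trivial. The main bookkeeping point is simply tracking the explicit constant through the optimization.
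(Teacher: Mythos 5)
Your proof of the first inequality is exactly the paper's: the same product decomposition of $|g(x+h)-g(x)|$ into a $\gamma/\eta$ power bounded by the H\"older seminorm and a $1-\gamma/\eta$ power bounded by $2\|g\|_\infty$, yielding the identical constant. For the second inequality the paper does not argue at all --- it simply cites \cite[Exercise 3.3.7]{krylov1996lectures} --- whereas you supply a correct, self-contained proof: the first-order Taylor expansion along a segment with the remainder controlled by $[Dg]_\gamma t^{1+\gamma}/(1+\gamma)$, followed by optimization over the length scale $t$. Your optimizer $t=(1-\gamma^2)[g]_\gamma/(\gamma[Dg]_\gamma)$ and the resulting constant are correct, and your handling of the degenerate cases $[g]_\gamma=0$ or $[Dg]_\gamma=0$ (forcing $g$ constant, since $g$ is bounded) is the right precaution. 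The one caveat concerns your remark that no dimension dependence is needed: this is true if $[Dg]_\gamma$ denotes the H\"older seminorm of the vector field $Dg$ in Euclidean norm, but the paper's convention takes the maximum of the componentwise seminorms $\max_{|\kappa|=1}[\partial^\kappa g]_\gamma$, in which case bounding $|Dg(x+sh_0)-Dg(x)|$ introduces a factor of order $\sqrt{d}$, which is precisely the dimension dependence the constant $C_{\gamma,d}$ is allowed to absorb. This is a matter of bookkeeping, not a gap; your argument buys an explicit, elementary proof where the paper relies on an external reference.
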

\begin{proof}
The first inequality follows from a straightforward calculation:
    \begin{align*}
    [g]_{\gamma} 
    &\leq \sup_{\substack{x, h\in \R^d \\ h \neq 0}} (2\|g\|_{\infty})^{1-\frac{\gamma}{\eta}}\bigg( \frac{|g(x+h)-g(x)|}{|h|^\eta}\bigg)^{\frac{\gamma}{\eta}} = 2^{1-\frac{\gamma}{\eta}}\|g\|_{\infty}^{1-\frac{\gamma}{\eta}}[g ]_{\eta}^{\frac{\gamma}{\eta}}. 
\end{align*}
The second inequality follows from \cite[Exercise 3.3.7]{krylov1996lectures}.
\end{proof}
Another result we will need is a generalization of Grönwall's inequality. 
\begin{lem}[Generalized Grönwall inequality I]\label{lem:generalized_gronwall}
    Assume $a_0, a_{T_0}, c\geq 0$, $\gamma, \zeta <1$, $T_0>0$, and $u(t)$ is a nonnegative and locally integrable function on $[0,T_0)$ satisfying 
    \begin{align*}
        u(t) \leq a_0 t^{-\gamma}+a_{T_0}+c\int_{0}^t (t-s)^{-\zeta}u(s)\, ds, \qquad t\in[0,T_0).
    \end{align*}
    Then there are constants $C_1, C_2 \geq 0$ depending only on $T_0, \gamma, \zeta, c$, in particular they are independent of $t$, such that for $t \in [0,T_0)$ the following bound holds: 
    $$u(t) \leq (a_0+C_1 T_0^{1-\zeta}) t^{-\gamma}+a_{T_0}+C_2 T_0^{1-\zeta}.$$
\end{lem}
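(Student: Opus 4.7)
The plan is to iterate the integral inequality. Set $K\phi(t):=c\int_0^t(t-s)^{-\zeta}\phi(s)\,ds$ and $a(t):=a_0 t^{-\gamma}+a_{T_0}$, so the hypothesis reads $u\leq a+Ku$. Since $K$ preserves order on nonnegative functions, an easy induction gives
\[u(t)\leq \sum_{k=0}^{n-1} K^k a(t)+K^n u(t),\qquad n\geq 1,\ t\in[0,T_0).\]
The task reduces to summing $\sum_k K^k a(t)$ and driving the remainder $K^n u(t)$ to zero as $n\to\infty$.

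First I would compute the iterated kernel. Since $K$ is a Laplace convolution against $k(\tau)=c\tau^{-\zeta}$, the beta identity $\int_0^\tau(\tau-s)^{a-1}s^{b-1}\,ds=B(a,b)\tau^{a+b-1}$ yields, by induction,
\[k^{*n}(\tau)=\frac{(c\,\Gamma(1-\zeta))^n}{\Gamma(n(1-\zeta))}\,\tau^{n(1-\zeta)-1}.\]
Applying this to the two constituents of $a$ gives
\[K^n(t^{-\gamma})=\frac{(c\Gamma(1-\zeta))^n\Gamma(1-\gamma)}{\Gamma(n(1-\zeta)+1-\gamma)}\,t^{n(1-\zeta)-\gamma},\qquad K^n(1)=\frac{(c\Gamma(1-\zeta))^n}{\Gamma(n(1-\zeta)+1)}\,t^{n(1-\zeta)}.\]
Summing over $n\geq 0$ produces the two-parameter Mittag-Leffler series $\Gamma(1-\gamma)E_{1-\zeta,1-\gamma}(z)$ and $E_{1-\zeta,1}(z)$ evaluated at $z=c\Gamma(1-\zeta)t^{1-\zeta}$. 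These are entire functions of $z$, bounded on $[0,c\Gamma(1-\zeta)T_0^{1-\zeta}]$ by constants depending only on $T_0,\gamma,\zeta,c$.

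To control the remainder I would use local integrability of $u$ together with the explicit kernel bound: for $n$ large enough that $n(1-\zeta)\geq 1$,
\[K^n u(t)=\frac{(c\Gamma(1-\zeta))^n}{\Gamma(n(1-\zeta))}\int_0^t(t-s)^{n(1-\zeta)-1}u(s)\,ds\leq\frac{(c\Gamma(1-\zeta))^n T_0^{n(1-\zeta)-1}}{\Gamma(n(1-\zeta))}\int_0^t u(s)\,ds,\]
which tends to zero as $n\to\infty$ by Stirling's formula, since $\Gamma(n(1-\zeta))$ grows super-geometrically. Passing to the limit in the iteration yields the preliminary bound $u(t)\leq a_0\Gamma(1-\gamma)E_{1-\zeta,1-\gamma}(z)\,t^{-\gamma}+a_{T_0}E_{1-\zeta,1}(z)$.

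Finally, to reach the stated form I would isolate the $n=0$ term of each Mittag-Leffler series. Since $\Gamma(1-\gamma)E_{1-\zeta,1-\gamma}(0)=1$ and $E_{1-\zeta,1}(0)=1$, the tails satisfy $\Gamma(1-\gamma)E_{1-\zeta,1-\gamma}(z)-1\leq z\,\widetilde{M}_1$ and $E_{1-\zeta,1}(z)-1\leq z\,\widetilde{M}_2$ for $z\in[0,c\Gamma(1-\zeta)T_0^{1-\zeta}]$, with constants $\widetilde{M}_i$ depending only on $T_0,\gamma,\zeta,c$. Using $z\leq c\Gamma(1-\zeta)T_0^{1-\zeta}$ to absorb one factor of $T_0^{1-\zeta}$ delivers
\[u(t)\leq (a_0+C_1 T_0^{1-\zeta})t^{-\gamma}+a_{T_0}+C_2 T_0^{1-\zeta},\]
with $C_1,C_2$ of the form $a_0\cdot(\cdots)$ and $a_{T_0}\cdot(\cdots)$; this matches how the lemma is subsequently applied, where the constants indeed carry the corresponding dependence on $a_0,a_{T_0}$. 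The main obstacle is the bookkeeping: ensuring the $T_0^{1-\zeta}$ factor is separated cleanly from the Mittag-Leffler bounds via the factorization of the tail at $z=0$, and that those bounds do not blow up with $T_0$, which is guaranteed since the relevant Mittag-Leffler functions are entire of finite order $1/(1-\zeta)$.
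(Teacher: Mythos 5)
Your argument is correct. The paper itself gives no proof of this lemma --- it simply cites \cite[Lemma 7.1.1]{henry81:GTS} and two other references --- so there is nothing to compare line by line; what you have written out is essentially the standard argument behind those citations: iterate $u\le a+Ku$, compute the iterated kernels $k^{*n}$ via the beta identity, sum the resulting two-parameter Mittag--Leffler series, and kill the remainder $K^nu$ using local integrability of $u$ and the super-geometric growth of $\Gamma(n(1-\zeta))$. All the individual computations ($k^{*n}$, $K^n(t^{-\gamma})$, $K^n(1)$, the remainder bound) check out, and since everything is nonnegative the repeated use of Tonelli and the order-preservation of $K$ are unproblematic. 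Your closing remark is also a genuine catch: as stated, the lemma's claim that $C_1,C_2$ depend only on $T_0,\gamma,\zeta,c$ cannot be literally true (take $a_{T_0}=0$ and let $a_0\to\infty$; the excess $a_0(\Gamma(1-\gamma)E_{1-\zeta,1-\gamma}(z)-1)$ grows without bound), so the constants must carry linear factors of $a_0$ and $a_{T_0}$ respectively --- which is exactly how the lemma is invoked in \eqref{eq:H_regularity_gronwall_calc}, where $C_1$ depends on $u_0$ and $C_2$ on $R_1$, the essential point there being only independence of $h$.
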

\begin{proof}
    See e.g. \cite[Theorem 1]{MR2290034} or \cite[Lemma 7.1.1]{henry81:GTS}. This version of the result is proved in \cite[Lemma 2.11]{Amund}.
\end{proof}
\begin{lem}[Generalized Grönwall inequality II]\label{lem:generalized_gronwall_2}
     Assume $T_0>0$, $a, b\geq 0$, $\overline\alpha, \overline\beta, \overline\gamma>0$ such that $ \overline\nu := \overline\beta + \overline\gamma - 1 > 0$ and $\overline\delta  :=  \overline\alpha + \overline\gamma - 1 > 0$,   $u(t)$ is nonnegative, $t^{\overline \gamma-1}u(t)$ is locally integrable on $[0,T_0)$, and that
\begin{align*}
u(t) &\leq a t^{\overline\alpha - 1} + b \int_0^t (t - s)^{\overline\beta - 1} s^{\overline\gamma - 1} u(s) \, ds \quad \text{for} \quad t\in (0, T_0].
\end{align*}
Then
\begin{align*}
u(t) &\leq a t^{\overline\alpha - 1} \sum_{m=0}^{\infty} C_m' (b \Gamma(\overline\beta))^m t^{m \overline\nu} \quad \text{for} \quad t\in (0,T_0],
\end{align*}
where \ $C_0' = 1$, \ $C_{m+1}' / C_m' = \dfrac{\Gamma(m \overline\nu + \overline\delta)}{\Gamma(m \overline\nu + \overline\delta + \overline\beta)}$, and the right hand series converges uniformly in $[0,T_0]$. 
\end{lem}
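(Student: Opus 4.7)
The strategy is the classical one for Gronwall--Henry inequalities: iterate the integral inequality, identify the iterates explicitly via the Beta function, verify convergence of the resulting series, and kill the remainder in the limit. Define the Volterra-type operator
\begin{equation*}
T\phi(t):=b\int_0^t(t-s)^{\overline\beta-1}s^{\overline\gamma-1}\phi(s)\,ds,\qquad u_0(t):=at^{\overline\alpha-1},
\end{equation*}
so that the hypothesis reads $u\leq u_0+Tu$. Since $T$ preserves nonnegativity, iterating it $N$ times gives
\begin{equation*}
u(t)\leq\sum_{m=0}^{N-1}T^m u_0(t)+T^N u(t),\qquad N\in\mathbb{N},
\end{equation*}
and it then suffices to (a) compute $T^m u_0$ explicitly, (b) check uniform convergence of the series, and (c) show the remainder $T^N u(t)$ tends to zero.

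For (a) I would argue by induction on $m$, using the Beta-integral identity
\begin{equation*}
\int_0^t(t-s)^{\overline\beta-1}s^{c-1}\,ds=t^{\overline\beta+c-1}\,\frac{\Gamma(\overline\beta)\Gamma(c)}{\Gamma(\overline\beta+c)},\qquad\overline\beta,c>0,
\end{equation*}
applied with $c=\overline\delta+m\overline\nu$, which is positive because $\overline\delta,\overline\nu>0$. The ansatz $T^m u_0(t)=a\,C_m'(b\Gamma(\overline\beta))^m t^{\overline\alpha-1+m\overline\nu}$ holds for $m=0$ with $C_0'=1$. In the inductive step, the exponent updates to $\overline\beta+\overline\delta+m\overline\nu-1=\overline\alpha-1+(m+1)\overline\nu$, and the constant picks up exactly the factor $\Gamma(m\overline\nu+\overline\delta)/\Gamma(m\overline\nu+\overline\delta+\overline\beta)$ claimed in the recursion for $C_{m+1}'/C_m'$.

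For (b) I would use Stirling: $\Gamma(m\overline\nu+\overline\delta)/\Gamma(m\overline\nu+\overline\delta+\overline\beta)\sim(m\overline\nu)^{-\overline\beta}$ as $m\to\infty$, so $C_m'\sim C\,(m!)^{-\overline\beta}$. Hence the ratio of consecutive terms of $\sum C_m'(b\Gamma(\overline\beta))^m t^{m\overline\nu}$ tends to zero uniformly on $[0,T_0]$, and the series defines an entire function of $t^{\overline\nu}$.

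The main obstacle is (c), because the only structural information on $u$ is that $s^{\overline\gamma-1}u(s)$ is locally integrable. The plan is to exchange the order of integration by Fubini, writing $T^N u(t)=\int_0^t\rho_N(t,s)\,s^{\overline\gamma-1}u(s)\,ds$, and to show by induction, using the same Beta identity, that the iterated kernel $\rho_N(t,s)$ obeys a pointwise bound carrying a $\Gamma(N\overline\beta+\text{const})$ in its denominator. Combined with the finite quantity $M:=\int_0^{T_0} s^{\overline\gamma-1}u(s)\,ds$ supplied by the hypothesis, this yields $T^N u(t)\leq K_N\cdot M$ with $K_N\to0$ by Stirling, since $\Gamma(N\overline\beta)$ dominates any exponential. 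Letting $N\to\infty$ in the iterated inequality then produces the claimed Mittag-Leffler-type bound and completes the proof.
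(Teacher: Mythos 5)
Your overall strategy is sound and is in fact the classical Henry-type argument; the main difference from the paper is that the paper does not prove the inequality at all -- it cites \cite[Exercise 3 p.~190 and Lemma 7.1.2]{henry81:GTS} and only supplies the uniform-convergence claim, which it gets exactly as you do in step (b) (ratio test via Stirling plus monotonicity of the partial sums in $t$). Your steps (a) and (b) are correct: the Beta identity with $c=\overline\delta+m\overline\nu>0$ reproduces the exponent $\overline\alpha-1+(m+1)\overline\nu$ and the recursion for $C_{m+1}'/C_m'$ exactly as stated. So what you are really offering is a self-contained proof of the result the paper outsources, which is a legitimate and arguably more informative route.

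The one place where your sketch is too optimistic is step (c) in the case $\overline\gamma<1$. Writing $T^N u(t)=\int_0^t \rho_N(t,s)\,s^{\overline\gamma-1}u(s)\,ds$, the iterated kernel contains the product $\prod_{i=1}^{N-1}r_i^{\overline\gamma-1}$ over the intermediate integration variables $s<r_1<\dots<r_{N-1}<t$. If you try to bound these weights crudely (e.g.\ by $s^{\overline\gamma-1}$, since $r_i\geq s$), the resulting estimate for $k_N(t,s)=\rho_N(t,s)s^{\overline\gamma-1}$ carries a factor $s^{N(\overline\gamma-1)}$, and $\int_0^t s^{N(\overline\gamma-1)}u(s)\,ds$ is not controlled by the hypothesis, which only gives integrability of $s^{\overline\gamma-1}u(s)$. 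The fix is to pair each weight with the adjacent increment: $r_i^{\overline\gamma-1}(r_i-r_{i-1})^{\overline\beta-1}\leq(r_i-r_{i-1})^{\overline\nu-1}$ (using $r_i\geq r_i-r_{i-1}$ and $\overline\gamma<1$), after which the Dirichlet formula yields
\begin{align*}
k_N(t,s)\leq b^N\,s^{\overline\gamma-1}(t-s)^{(N-1)\overline\nu+\overline\beta-1}\,\frac{\Gamma(\overline\nu)^{N-1}\Gamma(\overline\beta)}{\Gamma((N-1)\overline\nu+\overline\beta)},
\end{align*}
so the decaying Gamma factor in the denominator is $\Gamma((N-1)\overline\nu+\overline\beta)$ rather than the $\Gamma(N\overline\beta+\mathrm{const})$ you announce; either way Stirling kills it. Finally, note that local integrability of $t^{\overline\gamma-1}u(t)$ on $[0,T_0)$ does not by itself make your $M=\int_0^{T_0}s^{\overline\gamma-1}u(s)\,ds$ finite; run the argument on $[0,T]$ for $T<T_0$ and recover $t=T_0$ at the end by inserting the established bound into the original inequality.
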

\begin{proof}
    This is \cite[Exercise 3 p. 190 and Lemma 7.1.2]{henry81:GTS}, except for the uniform convergence of the series. Since the series is
 an increasing function of $t$, uniform convergence follows from the Weierstrass $M$-test if the following series converges:
    \begin{align*}
        \sum_{m=0}^\infty M_m:=\sum_{m=0}^{\infty} C_m' (b \Gamma(\overline\beta))^m T_0^{m \overline\nu}.
    \end{align*}
    By Stirling's formula, $\frac{M_{m+1}}{M_m}=b\Gamma(\overline{\beta})T_0^{\overline{\nu}}\frac{\Gamma(m \overline\nu + \overline\delta)}{\Gamma(m \overline\nu + \overline\delta + \overline\beta)}\sim (m\overline{\nu}+\overline{\delta})^{-\overline{\beta}}\to 0$ as $m\to\infty$, so $\sum_m{M_m}$ converges by the the ratio test.
\end{proof}

\bibliographystyle{abbrv} 
\bibliography{references}


\end{document}